\font\tenBbb=msbm10
\font\sevenBbb=msbm7
\font\fiveBbb=msbm5
\def\Bbb{\fam\Bbbfam\tenBbb}
  \newtheorem{theorem}{Theorem}[section]
  \newtheorem{lemma}[theorem]{Lem
  ma}
  \newtheorem{proposition}[theorem]{Proposition}
  \newtheorem{definition}[theorem]{Definition}
  \newcounter{figures}[section]
\newcommand{\be}{\begin{equation}}
\newcommand{\ee}{\end{equation}}
\newcommand{\bea}{\[\begin{array}{lll}}
\newcommand{\eea}{\end{array}\]}
\newcommand{\bi}{\begin{itemize}}
\newcommand{\ei}{\end{itemize}}
\newcommand{\bn}{\begin{enumerate}}
\newcommand{\en}{\end{enumerate}}
\newcommand{\bl}{\begin{lemma}}
\newcommand{\el}{\end{lemma}}
\newcommand{\bth}{\begin{theorem}}
\newcommand{\bE}{{\Bbb E}}
\newcommand{\bH}{{\Bbb H}}
\newcommand{\bK}{{\Bbb K}}
\newcommand{\bL}{{\Bbb L}}
\newcommand{\bR}{{\Bbb R}}
\newcommand{\bS}{{\Bbb S}}
\newcommand{\cP}{{\cal P}}
\newcommand{\cH}{{\cal H}}
\newcommand{\cN}{{\cal N}}
\newcommand{\cV}{{\cal V}}
\newcommand{\cM}{{\cal M}}
\newenvironment{rem}{\textit{Remark} : }{\hfill$\star$\newline }
\newenvironment{rems}{{\it Remarks} : \bi}{\hfill$\star$ \ei }
\newcommand{\Arccos}{\mathop{\textrm{Arccos
  }}}
\def\eps{\varepsilon}
\def\mod{{\rm mod \,}}
\def\a{\alpha}
\def\b{\beta}
\def\supp{{\rm supp \,}}
\def\ONE{{\mathbbm 1}}
\def\tONE{{\tilde{\mathbbm 1}}}
\newcommand{\lambdaquadra}{\omega}
\author{Kerkyacharian G\'erard, Kyriazis George, Le Pennec Erwan,\\
Petrushev Pencho and Picard  Dominique}
\date{March 11, 2009}
\title{Inversion of noisy Radon transform by SVD based needlets}
\begin{document}

\maketitle

\begin{abstract}
A linear method for inverting noisy observations of the Radon transform is developed
based on decomposition systems (needlets) with rapidly decaying elements
induced by the Radon transform SVD basis. Upper bounds of the risk of the estimator
are established in $L^p$ ($1\le p\le \infty$) norms for functions with Besov space smoothness.
A practical implementation of the method is given and several examples are discussed.
\end{abstract}

\section{Introduction}
\setcounter{equation}{0}

Reconstructing images (functions) from their Radon transforms is a fundamental problem in
medical imaging and more generally in tomography.
The problem is to find an accurate and efficient algorithm for approximation of the function to be recovered
from its Radon projections.
In this paper, we consider the problem of inverting noisy observations of the Radon transform.
As in many other inverse problems, there exists a basis which is fully adapted to the problem,
in particular, the inversion in this basis is very stable;
this is the Singular Value Decomposition (SVD) basis.
The Radon transform SVD basis, however, is not quite suitable for decomposition of functions
with regularities in other than $L^2$-related spaces.
In particular, the SVD basis is not quite capable of representing local features of images,
which are especially important to recover.

 The problem requires a special construction
adapted to the sphere and the Radon SVD, since usual tensorized wavelets will never reflect the
manifold structure of the sphere and will necessarily create unwanted artifacts, or will concentrate on special features (such as ridgelets...).

Our idea is to design an estimation method for inverting the Radon transform which
has the advantages of maximum localization of wavelet based methods combined with
the stability and computability of the SVD methods.
To this end we utilize the construction from \cite{pxuball} (see also \cite{pxukball})
of localized frames based on orthogonal polynomials on the ball, which are closely related to
the Radon transform SVD basis.
As shown in the simulation section the results obtained are quite promising.

To investigate the properties of this method, we perform two different studies. The first study is of theoretical kind and investigates the possible losses (in expectation) of the method in the 'minimax framework'. This principle, fairly standard in statistics, consists in analyzing the mathematical properties of estimation algorithms via optimization of their worst case performances over large ensembles of parameters. We carry out this study in a random model which is also well known in statistics, the white noise model.
This random model is a toy model well admitted in statistics since the 80's as an approximation of the 'real' model on scattered data. It is proved, for instance in \cite{brownlow96} that the regression model with uniform design and the white noise model are close in the sense of Le Cam's deficiency -which roughly means that any procedure can be transferred from one model to the other, with the same order of risk-. 
This model has the main advantage of avoiding unnecessary technicalities.
In this context we prove that over large classes of functions
(described later), our method has  optimal rates of convergence, for
all the $L^p$ losses. To our knowledge,  most of the  parallel results
are generally stated for $L^2$ losses, as in
\cite{madych83:_polyn_based_algor_comput_tomog} for example, very few (if any) consider  $L^p$ losses while it is a warrant for instance that the procedure will be able to detect small features. 
Again, the problem of choosing appropriated spaces of regularity in this context
is a serious question, and it is important to consider the spaces which
may be the closest to our natural intuition: those which generalize to the
present case the approximation properties shared by standard Besov and Sobolev spaces. We can also prove that our results apply for ordinary Besov spaces.

In the case $p\ge 4$ we exhibit here new minimax rates of convergence,
related to the ill posedness coefficient of the inverse problem $\frac{d-1}2$
along with edge effects induced by the geometry of the ball.
These rates are interesting from a statistical point of view and  have to be compared with similar phenomena occurring in other inverse problems
involving Jacobi polynomials (e.g. Wicksell problem), see \cite{kppw}.

Our second study of the performances of our procedure is performed on simulations. Since in practical situations  scattered data are generally observed,  we  carried out our  simulation study in the scattered data model. 
We basically compared our method  to the SVD procedure -since it is the most commonly studied method in statistics- and the simulation study
 consistently predicts  quite good performances of our
procedure and a comparison  extensively
 in favor of our algorithm.
One could object that it is a rather common opinion  that 'one should smooth
the SVD'.
  However,  there are many ways to do
so (for instance, we mention a parallel method, employing a similar idea for smoothing out the projection operator
but without using the needlet construction and in a no-noise framework, which
has been developed by Yuan Xu and his co-authors in \cite{xu7, XUTI, xuti1}). Ours has the advantage of being optimal for at least one point
of view since we are able to obtain the right rates of convergence in
$L_p$ norms.

The paper is structured as follows.
In Section 2 we introduce the model and the Radon transform Singular Value Decomposition.
In Section 3 we give the class of linear estimators built upon the SVD.
We also give the needlet construction and introduce the needlet estimation algorithm.
In Section 4 we
establish bounds for the risk of this estimate over large classes of regularity spaces.
Section 5 is devoted to the practical implementation and results of our method.
Section 6 is an appendix where  the proofs of some claims from Section 3 are given.

\section{Radon transform and white noise model}
\setcounter{equation}{0}

\subsection{Radon transform}

Here we recall the definition and some basic facts about the Radon transform
(cf. \cite{Helgason}, \cite{NATT}, \cite{LOG}).
Denote by $B^d$ the unit ball in $\bR^d$,
i.e.
$B^d = \{x  =(x_1,\ldots,x_d)\in \bR^d: |x| \le 1\}$ with $|x|=(\sum_{i=1}^d x_i^2)^{1/2}$
and by $\bS^{d-1}$ the unit sphere in $\bR^d$.
The Lebesgue measure on $B^d$ will be denoted by $dx$
and the usual surface measure on $\bS^{d-1}$ by $d\sigma(x)$
(sometimes we will also deal with the surface measure on $\bS^d$
which will be denoted by $d\sigma_d$).
We let $|A|$ denote the measure $|A| = \int_A dx$ if $A\subset B^d$
as well as  $ |A| = \int_A d\sigma(x) $ if $A\subset \bS^{d-1}$.

The Radon transform of a function $f$ is defined by
\[
Rf(\theta, s) =
\int_{\substack{y\in\theta^\perp\\
s\theta+y\in B^d}} f(s\theta  + y) dy,
\quad \theta \in \bS^{d-1}, \;s \in[-1,1],
\]
where $dy$ is the Lebesgue measure of dimension $d-1$ and
$\theta^\perp=\{x \in \bR^d:  \langle x , \theta \rangle = 0 \}$.
With a~slight abuse of notation, we will rewrite this integral as
\[
Rf(\theta, s) = \int_{\langle y,\theta \rangle
    =s}f(y) dy.
\]
It is easy to see (cf. e.g. \cite{NATT}) that the Radon transform is a bounded linear operator mapping
$\bL^2(B^d,dx)$ into $\bL^2 \left(\bS^{d-1} \times  [-1,1],  d\mu(\theta,s)\right)$,
where
\[
  d\mu(\theta, s) = d\sigma(\theta) \frac{ds}{(1-s^2)^{(d-1)/2}}.
\]

\subsection{Noisy observation of the Radon transform}

We consider observations  of the form
\begin{equation}\label{WNM}
 dY(\theta, s) = Rf (\theta, s) d\mu(\theta, s) + \eps  dW(\theta, s),
\end{equation}
where the unknown function $f$ belongs to
$\bL^2(B^d,dx) $.
The meaning of this equation is that for any
$\phi(\theta,s)$ in $\bL^2(\bS^{d-1} \times  [-1,1],  d\mu(\theta,s) )$
one can observe
\begin{align*}
Y_\phi
&=\int \phi(\theta,s ) dY(\theta,s) = \int_{\bS^{d-1} \times  [-1,1]} Rf(\theta,s)
\phi(\theta,s) d\mu(\theta,s) +  \eps \int \phi(\theta,s)dW(\theta,s)\\
&= \langle Rf, \phi \rangle_\mu + \eps W_\phi.
\end{align*}
Here $W_\phi =\int \phi(\theta,s) dW(\theta,s)$ is a Gaussian field of zero  mean and covariance
\[
\bE(W_\phi, ~W_\psi )=  \int_{\bS^{d-1} \times  [-1,1]} \phi(\theta ,s) \psi(\theta, s) d\sigma(\theta)
\frac{ds}{(1-s^2)^{(d-1)/2}} = \langle \phi, \psi \rangle_\mu.
\]
The goal is to recover the unknown function $f$ from the observation of $Y$.
As explained in the introduction, this model is a toy model, fairly accepted in statistics as an approximation of the 'real model' of scattered data. The study is carried out in this setting to avoid unnecessary technicalities.

Our idea is to devise an estimation scheme which combines
 the stability and computability of SVD decompositions with
the superb localization and multiscale structure of wavelets.
To this end we utilize a frame (essentially following the construction from \cite{pxukball})
with elements of nearly exponential localization
which is compatible with the SVD basis of the Radon transform.
This procedure is also to be considered as a first step towards a nonlinear procedure especially suitable to handle  spatial  adaptivity since real objects frequently exhibit a variety of shapes and spatial inhomogeneity.

\subsection{Polynomials and Singular Value Decomposition of the Radon transform}\label{SVD-Radon}

The SVD of the Radon transform was first established in
\cite{cormack64:_repres_ii,Davison, Louis}.
In~this regard we also refer the reader to \cite{NATT, xu7}.
In this section we record some basic facts related to the Radon SVD and recall some standard definitions which will be used in the sequel.

\subsubsection{Jacobi and Gegenbauer polynomials}\label{Jacobi}

The Radon SVD bases are defined in terms of Jacobi and Gegenbauer polynomials. \\
The Jacobi polynomials $P_n^{(\alpha,\beta)}$, $n\ge 0$,
constitute an orthogonal basis for the space
$\bL^2([-1, 1], w_{\alpha,\beta}(t)dt)$ with weight $w_{\alpha,\beta}(t)=(1-t)^\a(1+t)^\b$,
$\a, \b>-1$.
They are standardly normalized by
$P_n^{(\a,\b)}(1)={n+\a \choose n}$ and then \cite{AAR, Erdelyi, SZG}
$$
\int_{-1}^1 P_n^{(\alpha,\beta)}(t)
P_m^{(\a,\b)}(t)w_{\alpha,\beta}(t)dt  = \delta_{n, m} h_n^{(\alpha,\beta)},
$$
where
\begin{equation}\label{def-hn}
h_n^{(\alpha,\beta)} =
\frac{2^{\alpha+\beta+1}}{(2n+\alpha+\beta+1)}
\frac{\Gamma(n+\a+1)\Gamma(n+\b+1)}{\Gamma(n+1)\Gamma(n+\a+\b+1)}.
\end{equation}
The Gegenbauer polynomials $C_n^\lambda$ are a particular case of Jacobi polynomials, traditionally defined by
$$
C_n^\lambda(t)
=\frac{(2\lambda)_n}{(\lambda+1/2)_n}
P_n^{(\lambda-1/2,\,\lambda-1/2)}(t),
\quad \lambda>-1/2,
$$
where by definition $(a)_n= a(a+1)\dots(a+n-1)=\frac{\Gamma(a+n)}{\Gamma(a)}$.
It is readily seen that
$C_n^\lambda(1)= {n+2\lambda-1 \choose n}=\frac{\Gamma(n+2\lambda)}{n!\Gamma(2\lambda)}$ and
\begin{equation}\label{gegennorm}
\int_{-1}^1  C^\lambda_n (t) C^\lambda_m(t) (1-t^2)^{\lambda-\frac 12}dt
=\delta_{n, m}h_n^{(\lambda)}
\quad\mbox{with}\quad
h_n^{(\lambda)}= \frac{2^{1-2\lambda}\pi}{\Gamma(\lambda)^2}
\frac{\Gamma(n+2\lambda)}{(n+\lambda)\Gamma(n+1)}.
\end{equation}

\subsubsection{Polynomials on \boldmath $B^d$ and $\bS^{d-1}$}\label{polynom}

We detail the following well known notations which will be used in the sequel.
Let $\Pi_n(\bR^d) $ be the space of all polynomials in $d$ variables of degree $\le n$.
We denote by $\cP_n(\bR^d)$ the space of all homogeneous polynomials of degree $n$
and by  $\cV_n(\bR^d)$ the space of all polynomials of degree $n$
which are orthogonal to lower degree polynomials with respect to the Lebesgue measure on $B^d$.
$\cV_0$ is the set of  constants.
We have the following orthogonal decomposition:
$$
\Pi_n(\bR^d) =\bigoplus_{k=0}^n \cV_k(\bR^d).
$$

Also, denote by $\cH_n(\bR^d)$ the subspace of all harmonic homogeneous polynomials of degree~$n$
and by $\cH_n(\bS^{d-1})$  the 
 restriction of the polynomials from $\cH_n(\bR^d)$ to $\bS^{d-1}$.
Let $\Pi_n(\bS^{d-1}) $ be the space of  restrictions to $\bS^{d-1}$ of polynomials of degree
$\leq n$ on $\bR^d$.
As is well known
\[
\Pi_n(\bS^{d-1})= \bigoplus_{m=0}^n  \cH_{m}(\bS^{d-1})
\]
(the orthogonality is with respect of the surface measure $d\sigma$ on $\bS^{d-1}$).

Let $Y_{l,i}$, $1\leq i \leq N_{d-1}(l)$, be an orthonormal basis of
$\cH_{l}(\bS^{d-1})$, i.e.
\[
\int _{\bS^{d-1}}Y_{l,i}(\xi) \overline{Y_{l,i'}(\xi)} d\sigma(\xi) = \delta_{i,i'}.
\]
Then the natural extensions of $Y_{l,i}$ on $B^d$ are defined by
$Y_{l,i}(x)=|x|^l Y_{l,i}\big( \frac{x}{|x|}\big)$ and satisfy
\begin{align*}
\int _{B^{d}}Y_{l,i}(x) \overline{Y_{l,i'}(x)} dx
&= \int_0^{1} r^{d-1} \int _{\bS^{d-1}}Y_{l,i}(r\xi) \overline{Y_{l,i'}(r\xi)} d\sigma(\xi)dr \\
&= \int_0^{1} r^{d+2l-1} \int _{\bS^{d-1}}Y_{l,i}(\xi) \overline{Y_{l,i'}(\xi)} d\sigma(\xi)dr
= \delta_{i,i'} \frac 1{2l+d}.
\end{align*}
For more details we refer the reader to  \cite{DUXU}.

The spherical harmonics on $\bS^{d-1}$ and orthogonal polynomials on $B^d$ are naturally related
to Gegenbauer polynomials.
The kernel of the orthogonal projector onto $\cH_n(\bS^{d-1})$ can be written as (see e.g. \cite{STW}) if ${N_{d-1}(n)}$ is the dimension of $\cH_n(\bS^{d-1})$:
\begin{equation}\label{legen}
\sum_{i=1}^{N_{d-1}(n)} Y_{l, i}(\xi ) \overline{Y_{l, i}(\theta)}
= \frac{2n+ d-2}{(d-2)|\bS^{d-1}|} C^{\frac{d-2}{2}}_n (\langle \xi, \theta \rangle).
\end{equation}
The ``ridge" Gegenbauer polynomials $C_n^{d/2}(\langle x, \xi  \rangle)$ are orthogonal
to $\Pi_{n-1}(B^d)$ in $\bL^2(B^d)$ and the kernel $L_n(x,y)$ of the orthogonal projector onto $\cV_n(B^d)$
can be written in the form (see e.g. \cite{petrush, xu7})
\begin{align}\label{orth-projector}
L_n(x,y)
&= \frac{2n+d}{|\bS^{d-1}|^2}
\int _{\bS^{d-1}}C^{d/2}_n ( \langle x, \xi  \rangle )C^{d/2}_n ( \langle y, \xi  \rangle )d\sigma(\xi)\\
&= \frac{(n+1)_{d-1}}{2^d \pi^{d-1}}
\int _{\bS^{d-1}}\frac{C^{d/2}_n ( \langle x, \xi  \rangle )
C^{d/2}_n ( \langle y, \xi  \rangle ) }{\|  C^{d/2}_n\|^2}d\sigma(\xi).\notag
\end{align}

The following important identities are valid for  ``ridge" Gegenbauer polynomials:
\begin{equation}\label{ridge-Gegen1}
\int_{B^d} C^{d/2}_n (\langle \xi, x \rangle)C^{d/2}_n (\langle \eta, x \rangle)dx
=\frac{h_n^{(d/2)}}{C_n^{d/2}(1)}C^{d/2}_n (\langle \xi, \eta \rangle),
\quad \xi, \eta\in \bS^{d-1},
\end{equation}
and, for $x\in B^d$, $\eta\in \bS^{d-1}$,
\begin{equation}\label{ridge-Gegen2}
\int_{\bS^{d-1}} C^{d/2}_n (\langle \xi, x \rangle)C^{d/2}_n (\langle \xi, \eta \rangle)d\sigma(\xi)
=|\bS^{d-1}| C^{d/2}_n (\langle \eta, x \rangle),
\end{equation}
see e.g. \cite{petrush}.
By (\ref{orth-projector}) and (\ref{ridge-Gegen2})
\[
L_n(x,\xi)
= \frac{(2n+d)}{|\bS^{d-1}|}  C^{d/2}_n ( \langle x, \xi  \rangle ),
\quad \xi\in \bS^{d-1},
\]
and again by (\ref{orth-projector})
\[
\int _{\bS^{d-1}}L_n (x,\xi) L_n(y, \xi) d\sigma(\xi) = (2n+d)L_n(x,y).
\]

\subsubsection{The SVD of the Radon transform}\label{SVD}

Assume that
$\{Y_{l,i}: 1\leq i \leq N_{d-1}(l)\}$ is an orthonormal basis for $\cH_{l}(\bS^{d-1})$.
Then it is standard and easy to see that the family of polynomials
$$
f_{k,l,i} (x)
= (2k+d)^{1/2}P_j^{(0,\, l + d/2 -1)} (2|x|^2-1)Y_{l, i}(x),
\;\; 0\leq l \leq k,\; k-l =2j, \; 1\leq i \leq N_{d-1}(l),
$$
form an orthonormal basis of $\cV_k(B^d)$, see e.g. \cite{DUXU}.
On the other hand the collection
$$
g_{k,l,i}(\theta, s)
=  [h_k^{(d/2)}]^{-1/2}(1-s^2)^{(d-1)/2} C^{d/2}_k(s) Y_{l,i }(\theta),
\quad k\ge 0, \; l \ge 0, \; 1\leq i \leq N_{d-1}(l),
$$
is obviously an orthonormal basis of $\bL^2(\bS^{d-1}\times [-1,1], d\mu(\theta,s))$.

\noindent
Figure~\ref{fig:RadonSVD} displays a few $f_{k,l,i}$ and illustrates
their lack of localization.

\begin{figure}
  \centering
  \begin{tabular}{ccc}
$f_{3,1,0}$&&$f_{4,4,1}$\\
  \includegraphics[width=4.5cm]{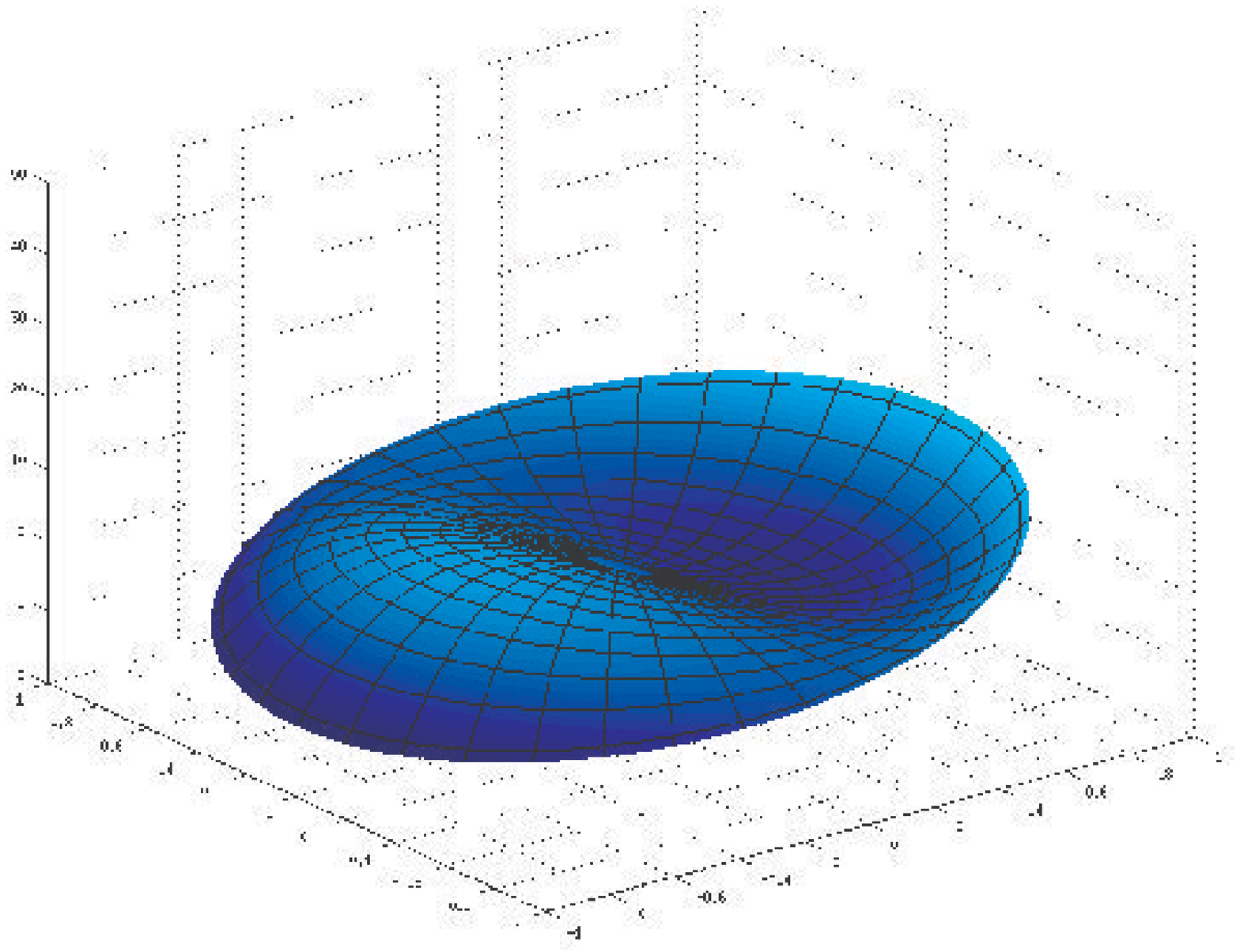} &\hspace*{.25cm}&
  \includegraphics[width=4.5cm]{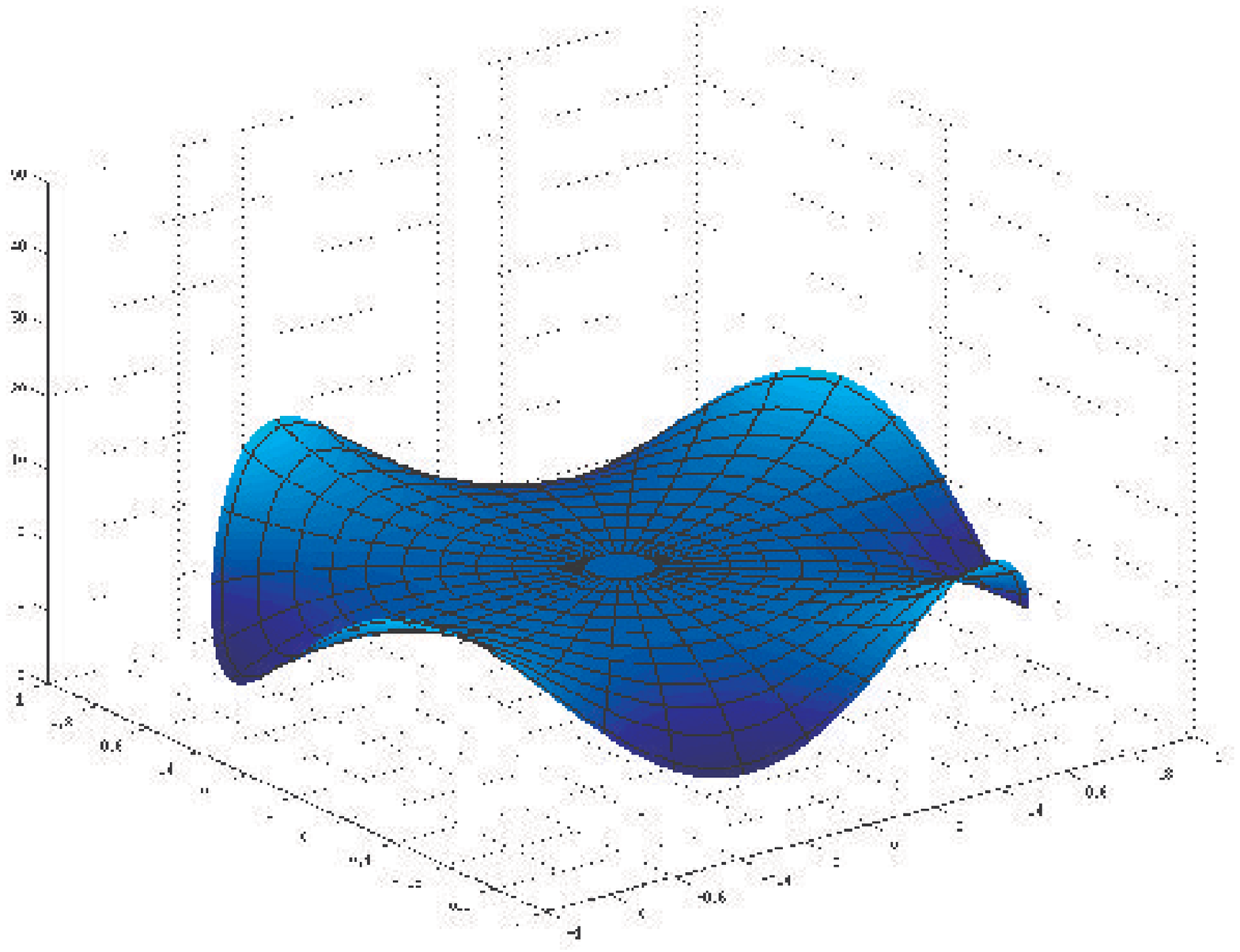}\\
  \includegraphics[width=4.5cm]{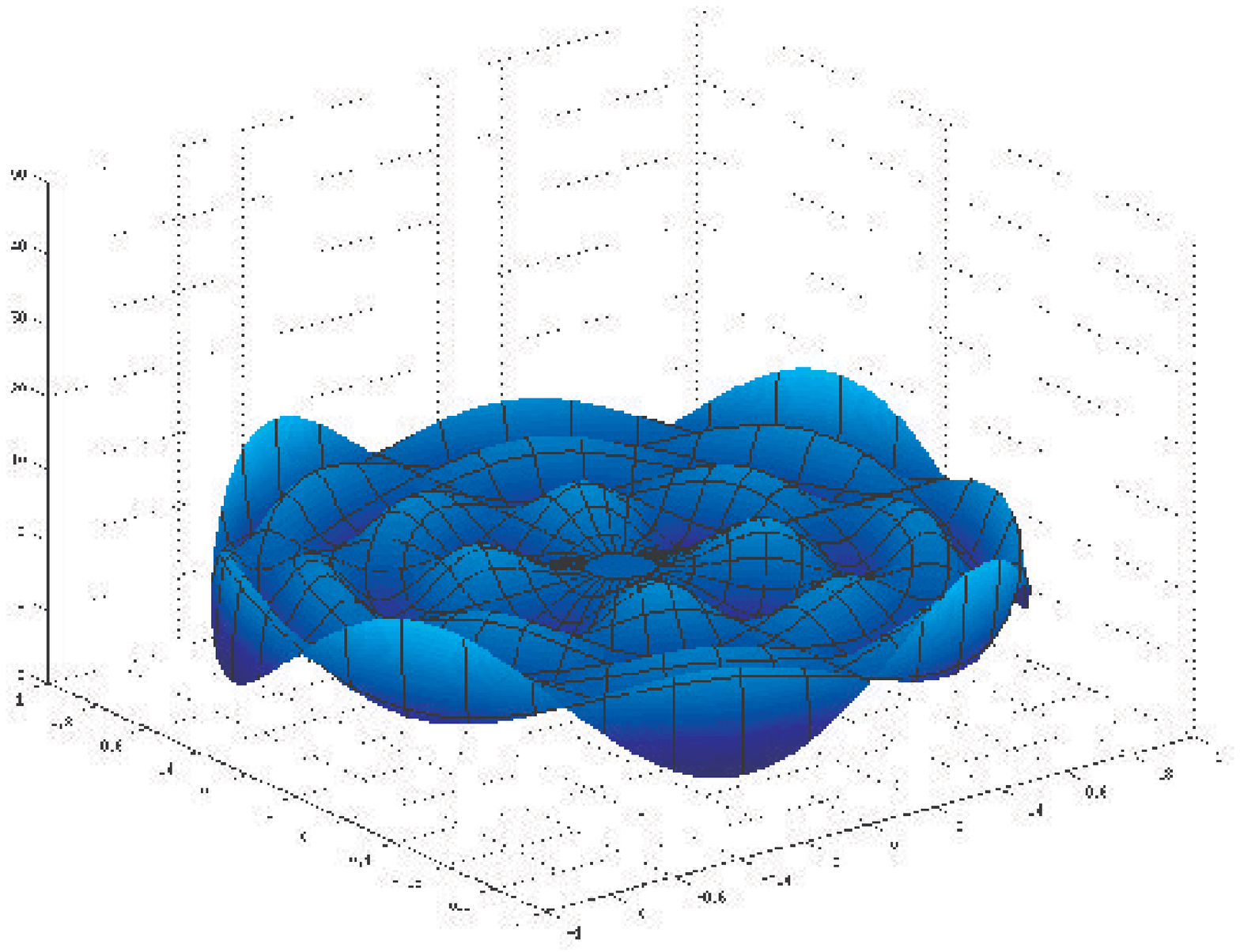} &\hspace*{.25cm}&
  \includegraphics[width=4.5cm]{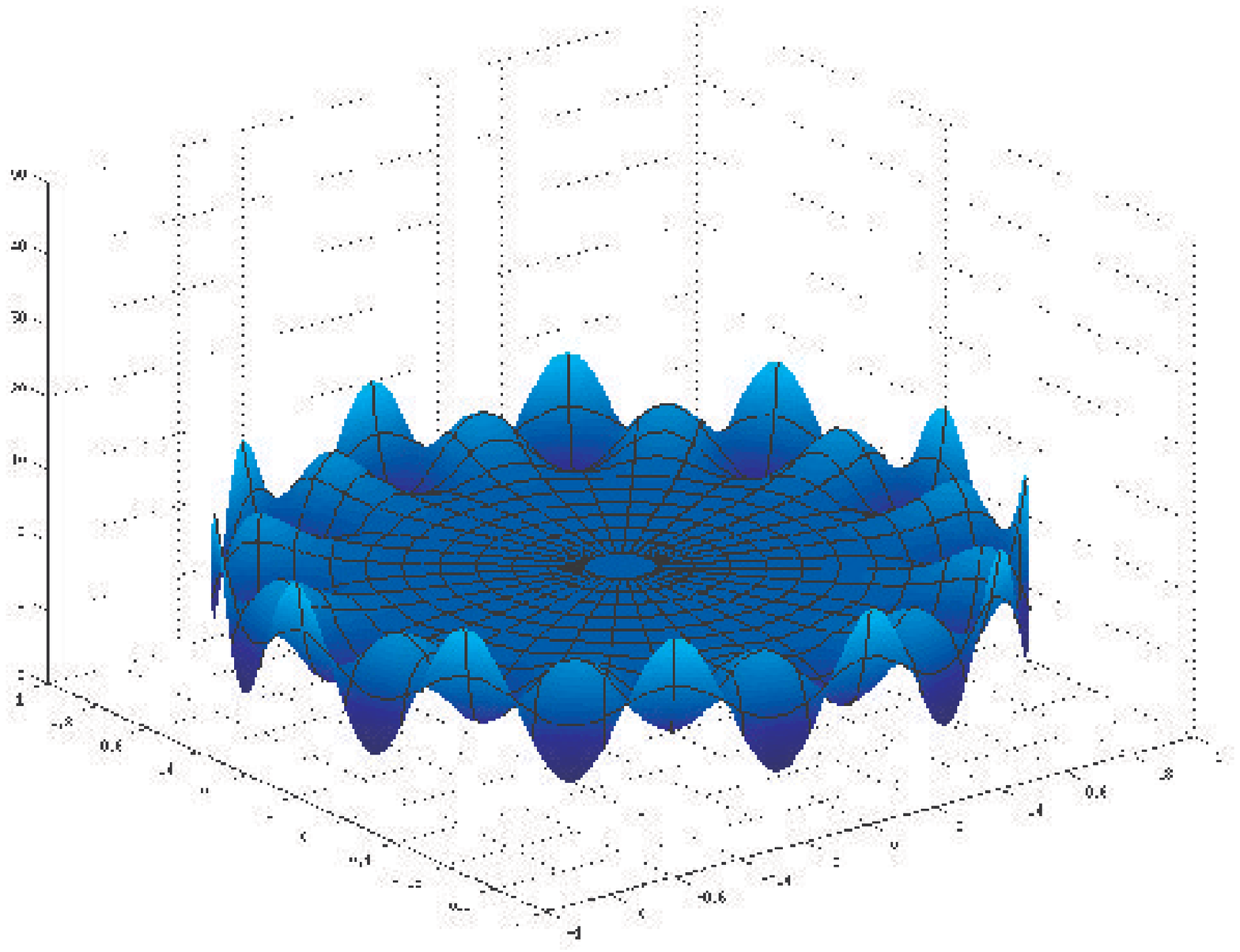}\\
$f_{14,4,1}$&&$f_{14,12,0}$
  \end{tabular}
  \caption{A few radon SVD basis elements (Low quality figure due to
    arXiv constraint)}
  \label{fig:RadonSVD}
\end{figure}

\smallskip
\noindent
The following theorem gives the SVD decomposition of the Radon transform.


\begin{theorem} \label{thm:SVD}
For any $f\in \bL^2(B^d)$
\begin{equation}\label{Radon1}
Rf=
\sum_{k\ge 0}\, \lambda_k
\sum_{0 \leq l \leq k, \,k-l \equiv 0 \,(\mod 2)}
\sum_{ 1  \leq i \leq N_{d-1}(l) }
\langle f, f_{k, l, i}\rangle g_{k,l,i}
\end{equation}
and for any $g\in \bL^2(\bS^{d-1}\times [-1,1], d\mu(\theta,s))$
\begin{equation}\label{Radon2}
R^*g=
\sum_{k\ge 0}\, \lambda_k
\sum_{0 \leq l \leq k, \,k-l \equiv 0 \,(\mod 2)}
\sum_{ 1  \leq i \leq N_{d-1}(l) }
\langle g, g_{k, l, i}\rangle_\mu f_{k,l,i}.
\end{equation}
Furthermore, for $f\in \bL^2(B^d)$
\begin{equation}\label{Radon3}
f= \sum_{k\ge 0}\, \lambda_k^{-1}
\sum_{0 \leq l \leq k, \,k-l \equiv 0 \,(\mod 2)}
\sum_{ 1  \leq i \leq N_{d-1}(l) }
\langle Rf, g_{k, l, i}\rangle_\mu f_{k,l,i}.
\end{equation}
In the above identities the convergence is in $\bL^2$ and
\begin{equation}\label{lambda-k}
\lambda_k^2 = \frac{2^d\pi^{d-1}}{(k+1)(k+2)\dots(k+d-1)}
= \frac{2^d\pi^{d-1}}{(k+1)_{d-1}}\sim k^{-d+1}.
\end{equation}

\end{theorem}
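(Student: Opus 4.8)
The plan is to verify that the pair of families $\{f_{k,l,i}\}$ and $\{g_{k,l,i}\}$ are singular systems for $R$, i.e. that $R f_{k,l,i} = \lambda_k\, g_{k,l,i}$ for the constants $\lambda_k$ in \eqref{lambda-k}, and then conclude by the abstract SVD formalism together with the fact (recalled in Section~2.1) that the $f_{k,l,i}$ exhaust $\bL^2(B^d)$ while the $g_{k,l,i}$ are an orthonormal basis of $\bL^2(\bS^{d-1}\times[-1,1],d\mu)$. Granting the intertwining relation, \eqref{Radon1} is just the expansion of $f$ in the orthonormal basis $\{f_{k,l,i}\}$ followed by applying $R$ termwise (legitimate since $R$ is bounded, as noted after the definition of $R$); \eqref{Radon2} follows by taking adjoints, using that $\{g_{k,l,i}\}$ is orthonormal; and \eqref{Radon3} comes from \eqref{Radon2} applied to $g=Rf$ together with $R^*R f_{k,l,i}=\lambda_k^2 f_{k,l,i}$ and the completeness of $\{f_{k,l,i}\}$, noting $\lambda_k\ne 0$. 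The asymptotics $\lambda_k^2\sim k^{-d+1}$ in \eqref{lambda-k} is immediate from the product formula once the exact value is in hand.

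The heart of the matter is therefore the identity $R f_{k,l,i}=\lambda_k g_{k,l,i}$, which I would prove by reducing to the ``ridge'' Gegenbauer identities \eqref{ridge-Gegen1}--\eqref{ridge-Gegen2}. The key structural fact is that the orthogonal projector kernel $L_k(x,y)$ onto $\cV_k(B^d)$ is, by \eqref{orth-projector}, an average of products of ridge Gegenbauer polynomials $C_k^{d/2}(\langle x,\xi\rangle)$, and the Radon transform of a ridge polynomial $C_k^{d/2}(\langle\cdot,\xi\rangle)$ is explicitly computable: integrating $C_k^{d/2}(\langle y,\xi\rangle)$ over a hyperplane $\langle y,\theta\rangle=s$ produces, up to the Jacobian weight $(1-s^2)^{(d-1)/2}$, a one-dimensional Gegenbauer polynomial in $s$ times a power of $\langle\theta,\xi\rangle$ — this is the classical computation behind the Cormack/Davison/Louis SVD (cf. \cite{cormack64:_repres_ii,Davison,Louis,NATT}). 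Concretely, one shows $R\big(C_k^{d/2}(\langle\cdot,\xi\rangle)\big)(\theta,s) = c_k\,(1-s^2)^{(d-1)/2}C_k^{d/2}(s)$ when $\theta=\xi$ and, more generally, that $R\,L_k(\cdot,y)$ applied in the $y$-variable against $f_{k,l,i}$ collapses via \eqref{ridge-Gegen2} to a single ridge term. Combining this with \eqref{ridge-Gegen1} to identify the normalizing constant, and using \eqref{legen} to handle the spherical-harmonic factor $Y_{l,i}$ (which passes through $R$ essentially untouched since the inner integral is over $\theta^\perp$), yields $R f_{k,l,i}=\lambda_k g_{k,l,i}$ with $\lambda_k^2 = h_k^{(d/2)}/C_k^{d/2}(1)$; a short manipulation of \eqref{def-hn}, \eqref{gegennorm} and the value $C_k^{d/2}(1)=\binom{k+d-1}{k}$ turns this into the product formula \eqref{lambda-k}.

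I expect the main obstacle to be the bookkeeping of normalizing constants: the definitions mix Jacobi normalization $P_n^{(\a,\b)}(1)=\binom{n+\a}{n}$, Gegenbauer normalization, the surface-measure factors $|\bS^{d-1}|$, and the weight $d\mu$, so matching the constant in front of $g_{k,l,i}$ to exactly $\lambda_k=\big(2^d\pi^{d-1}/(k+1)_{d-1}\big)^{1/2}$ requires care with $\Gamma$-function identities. A secondary technical point is justifying the termwise application of $R$ and $R^*$ and the $\bL^2$ convergence in \eqref{Radon1}--\eqref{Radon3}: this is routine given boundedness of $R$ and orthonormality of the two systems, but one should note explicitly that $R$ maps $\cV_k(B^d)$ into $\mathrm{span}\{g_{k,l,i}:l\le k,\ k-l\text{ even}\}$ and is injective (so that \eqref{Radon3} makes sense), both of which follow once the singular-system identity is established. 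Everything else is a direct consequence of the general principle that a bounded operator with a complete singular system admits the expansions \eqref{Radon1}--\eqref{Radon3}.
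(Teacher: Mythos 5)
First, for context: the paper does not actually prove Theorem~\ref{thm:SVD} --- it refers to \cite{NATT} and \cite{xu7} for the singular value decomposition itself, and only verifies the explicit value \eqref{lambda-k} in the appendix, starting from a formula for $\lambda_k^2$ quoted from \cite[p.~99]{NATT} (an integral of $C_k^{d/2}(t)C_l^{(d-2)/2}(t)$ against the weight $(1-t^2)^{(d-3)/2}$) and evaluating it via the connection identity \eqref{sum-C}, orthogonality, and the duplication formula for the Gamma function. Your global strategy --- establish the intertwining relation $Rf_{k,l,i}=\lambda_k g_{k,l,i}$ and then invoke the abstract singular-system formalism to get \eqref{Radon1}--\eqref{Radon3} --- is the standard and correct one, and that reduction is fine.

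However, two parts of your sketch do not hold up. (i) The value you assign to the singular values, $\lambda_k^2=h_k^{(d/2)}/C_k^{d/2}(1)$, is wrong: by \eqref{gegennorm} this ratio equals $\frac{2^{1-d}\pi\,\Gamma(d)}{\Gamma(d/2)^2\,(k+d/2)}\sim k^{-1}$, whereas \eqref{lambda-k} gives $\lambda_k^2=2^d\pi^{d-1}/(k+1)_{d-1}\sim k^{-(d-1)}$; these disagree even in order of magnitude for $d\ge 3$ (and by a constant factor for $d=2$). The quantity $h_k^{(d/2)}/C_k^{d/2}(1)$ is the constant of \eqref{ridge-Gegen1}, i.e.\ it governs the $\bL^2(B^d)$ inner product of two ridge polynomials; the singular values instead arise from $\langle Rf_{k,l,i},g_{k,l,i}\rangle_\mu$, where the hyperplane integration introduces the weight $(1-t^2)^{(d-3)/2}$ and the mixed integral of $C_k^{d/2}$ against $C_l^{(d-2)/2}$ that the appendix evaluates; so the ``short manipulation'' you defer cannot succeed from your starting point. (ii) The crucial identity $Rf_{k,l,i}=\lambda_k g_{k,l,i}$ is asserted rather than proved: the claim that $Y_{l,i}$ ``passes through $R$ essentially untouched since the inner integral is over $\theta^\perp$'' is not a justification --- the slice integral samples $Y_{l,i}$ at all points $(s\theta+y)/|s\theta+y|$ of the chord, and the fact that the output is proportional to $Y_{l,i}(\theta)$ requires the rotational equivariance of $R$ together with a Funk--Hecke-type computation (this is precisely the nontrivial content of the cited references). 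As written, the heart of the proof is missing and the one explicit formula offered for $\lambda_k$ is incorrect.
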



\begin{rem}
Observe that
if $ k\ge 0$, $0 \leq   l \leq k$, $k-l \not\equiv 0 \,(\mod 2)$ , and $1  \leq i \leq N_{d-1}(l)$,
then
$
R^*f_{k,l,i} = 0.
$
\end{rem}

For the proof of this result, we refer the reader to \cite{NATT} and \cite{xu7}.
We will only show in the appendix that $\lambda_k^2$ has the value given in (\ref{lambda-k}).

\section{Linear estimators built upon the SVD}\label{estimators}
\setcounter{equation}{0}

\subsection{The general idea}

In a general noisy inverse model
 \[
dY_t = K f dt + \eps dW_t,
\]
where $K$ is a linear operator mapping  $f\in \bH\mapsto Kf\in\bK$,
and $\bH$ and $\bK$ are two Hilbert spaces,  the SVD
yields a family of linear estimators via the following
classical scheme.

Suppose $K$ has an SVD
$$
Kf=\sum_m \sigma_m \langle f, e_m \rangle e_m^*,
\quad f\in \bH,
$$
where $\{e_m\}$ and  $\{e^*_m\}$ are orthonormal bases for $\bH$ and $\bK$, respectively,
and
$Ke_m=\sigma_m e^*_m$ and $K^*e^*_m =\sigma_m e_m$ with $K^*$ being the adjoint operator of $K$.
We also assume that $\sigma_m \to 0$.
Then, if $\sigma_m\not=0$,
\begin{align*}\int e^*_mdY_t
&= \int  K f \cdot e^*_mdt + \eps \int e^*_m dW_t
= \int   f \cdot K^* e^*_mdt + \eps \int e^*_mdW_t\\
&=\sigma_{m} \int   f    e_mdt + \eps \int e^*_mdW_t
\end{align*}
and hence
\begin{equation}\label{fem}
\frac 1{\sigma_{m}} \int e^*_mdY_t  =  \langle f, e_m\rangle +
\frac{\eps}{\sigma_{m}}  \int e^*_m dW_t.
\end{equation}
In going further, suppose that $\{\phi_l\}$ is a tight frame for $\bH$.
Therefore, for any $f\in \bH$
\[
f=\sum_l \alpha_l\phi_l,
\quad
\alpha_l=\langle f, \phi_l\rangle.
\]
We can represent $\phi_{l}$ in the basis $\{e_m\}$:
\[
 \phi_{l}  = \sum_{m}\langle \phi_{l}, e_m \rangle  e_m=
  \sum_{m} \gamma^{m}_{l}  e_m
\]
and hence
\[
\alpha_{l} = \sum_{m} \gamma^{m}_{l} \langle f, e_m\rangle.
\]
On account of (\ref{fem}) this leads to the estimator
\begin{equation}\label{flin}
\hat{f}_N= \sum_{l\le N}\hat{\alpha}_{l} \phi_{l}
\quad\mbox{with}\quad
\hat{\alpha}_{l} = \sum_{m} \gamma^{m}_{l}\frac 1{\sigma_{m}} \int e^*_m dY_t,
\end{equation}
where $N$ is a parameter.
By (\ref{fem}) we have
\begin{align*}
\hat{\alpha}_{l}
= \sum_{m} \gamma^{m}_{l}\langle f, e_m \rangle
+ \sum_{m} \gamma^{m}_{l}\frac{\eps}{\sigma_{m}}  \int e^*_m dW_t
= \alpha_{l} + Z_{l},
\end{align*}
where $Z_{l}$ has a normal distribution $N(0, \sum_{m} (\gamma^{m}_{l})^2\frac{\eps^2}{\sigma^2_{m}})$.
In this scheme the factors $\frac 1{\sigma_m^2}$,
which are inherent to the inverse model, bring instability by inflating the variance.

The selection of the frame $\{\phi_l\}$ is critical for the method described above.
The standard SVD method corresponds to the choice $\phi_l=e_l$.
This SVD method is very attractive theoretically and can be shown to be asymptotically optimal
in many situations
(see Dicken and Maass \cite{dicma}, Math\'{e} and Pereverzev \cite{MR1984890}
together with their nonlinear counterparts
Cavalier and Tsybakov \cite{cavalier_tsybakov}, Cavalier et al \cite{cgpt},
Tsybakov \cite{MR1769957}, Goldenschluger and Pereverzev \cite{MR2047686},
Efromovich and Kolchinskii \cite{MR1872847}).
It also has the  big advantage of performing a quick and stable inversion of the operator $K$.
However, while the SVD bases are fully adapted to describe the operator $K$,
they are usually not quite appropriate for accurate description of the solution of the problem
with a small number of parameters.
Although the SVD method is suitable for estimating the unknown function $f$ with an $L^2$-loss,
it is also rather inappropriate for other losses.
It is also restricted to functions which are well represented in terms of the Sobolev space
associated to the  SVD basis.
Switching to an arbitrary frame $\{\phi_m\}$, however,
may yield additional instability through the factors $(\gamma^{m}_{l})^2$'s.

Our idea is to utilize a frame $\{\phi_l\}$ which
is compatible with the SVD basis $\{e_m\}$, allowing to keep the variance within reasonable bounds,
and has elements with superb space localization and smoothness, guaranteeing excellent
approximation of the unknown function $f$.
In the following we implement the above described method to the inversion of the Radon transform.
We shall build upon the frames constructed in \cite{pxuball} and called ``needlets".

\subsection{Construction of needlets on the ball}

In this part we construct the building blocks of our estimator.
We will essentially follow the construction from \cite{pxuball}.

\subsubsection{The orthogonal projector \boldmath $L_k$ on $\cV_k(B^d)$ .}

Let $\{f_{k,l,i}\}$ be
the orthonormal basis of $\cV_k(B^d)$ defined in \S\ref{SVD}.
Denote by  $T_k$ the index set of this basis, i.e.
$
T_k=\{(l, i): 0\leq l \leq k,\, l\equiv k (\mod 2), \, 0\leq i \leq N_{d-1}(l)\}$.
Also, set  $\nu=d/2-1$. Then the orthogonal projector of $\bL^2(B^d)$ onto $\cV_k(B^d)$
can be written in the form
$$
L_kf = \int_{B^d} f(y) L_k(x,y) dy
\quad\mbox{with}\quad
L_k(x,y) = \sum_{l,i \in T_k}  f_{k,l,i}(x) f_{k,l,i}(y).
$$
Using (\ref{legen}) $L_k(x,y)$ can be written in the form
\begin{align}\label{EqF}
&L_k(x,y)\\ 
& =(2k+d)  \sum_{l\le k,\, k-l\equiv 0(2)} P_j^{(0, l +\nu)} (2|x|^2-1)  |x|^l
P_j^{(0, l +\nu)} (2|y|^2-1)|y|^l
\sum_{i} Y_{l, i}\Big(\frac x{|x|}\Big)Y_{l,i}\Big(\frac y{|y|}\Big) \nonumber\\
&= \frac{ (2k+d)}{|\bS^{d-1}|} \sum_{l\le k,\, k-l\equiv 0(2)}
P_j^{(0, l +\nu)} (2|x|^2-1)|x|^l P_j^{(0, l +\nu)} (2|y|^2-1) |y|^l \Big(1+\frac l\nu\Big)
C^\nu_l \Big(\Big\langle \frac x{|x|},  \frac y{|y|}  \Big\rangle \Big). \nonumber
\end{align}
Another representation of $L_k(x,y)$ has already be given in (\ref{orth-projector}).
Clearly
\begin{equation}\label{pro}
\int_{B^d} L_m(x,z) L_k(z,y) dz = \delta_{m,k} L_m(x,y)
\end{equation}
and for $f\in \bL^2(B^d)$
\begin{equation}\label{orthog-dec}
f=\sum_{k\ge 0} L_kf
\quad\mbox{and}\quad
\|f\|_2^2 =\sum_{k} \|L_k f\|_2^2 =\sum_{k} \langle L_k f, f \rangle.
\end{equation}

\subsubsection{Smoothing}\label{defAB}

Let $a\in C^\infty[0, \infty)$ be a cut-off function such that
$0 \leq a \leq 1$,
$a(t)=1$ for $t\in [0, 1/2]$ and $\supp a \subset [0, 1]$.
We next use this function to introduce a sequence of operators on $\bL^2(B^d)$.
For $j\ge 0$ write
\[
 A_j f (x) = \sum_{k\ge 0} a\Big(\frac k{2^j}\Big)  L_k f(x) = \int_{B^d} A_j(x,y)f(y) dy
\quad \text{ with } \; A_j(x,y)=  \sum_{k} a\Big(\frac k{2^j}\Big)  L_k (x,y).
\]
Also, we define $B_jf = A_{j+1}f-A_jf$.
Then setting $b(t) = a(t/2) - a(t)$ we have
\[
 B_j f (x) = \sum_{k} b\Big(\frac k{2^j}\Big)L_k f(x) = \int_{B^d} B_j(x,y)f(y) dy
\quad \text{ with } \;
B_j(x,y)=  \sum_{k} b\Big(\frac k{2^j}\Big)L_k (x,y).
\]
Evidently, for $f\in \bL^2(B^d)$
\begin{equation}{\label{pro1}}
\langle A_jf,f \rangle = \sum_{k} a\Big(\frac k{2^j}\Big) \langle  L_k f, f \rangle \leq   \| f\|_2^2
\end{equation}
and
\begin{equation}{\label{pro2}}
\lim_{j\rightarrow \infty } \| A_jf-f\|_2
=\lim_{j\rightarrow \infty } \| ( A_0 +\sum_{m=0}^{j-1} B_m)f-f\|_2
=0.
\end{equation}

An important result from \cite{pxuball} (see also \cite{pxukball}) asserts that
the kernels $A_j(x, y)$, $B_j(x, y)$ have nearly exponential localization, namely,
for any $M>0$ there exists a constant $c_M>0$ such that
\begin{equation}\label{EqFond}
|A_j(x,y)|, |B_j(x,y)| \leq C_M \frac{ 2^{jd}}{(1+2^jd(x,y))^M \sqrt{ W_j(x)} \sqrt{ W_j(y)} },
\quad x, y\in B^d,
\end{equation}
where
$
W_j(x) = 2^{-j} +\sqrt{ 1-|x|^2}, ~~|x|^2 =|x|^2_{d} =\sum_{i=1}^d x_i^2,
$
and
\begin{equation}\label{def-distance}
d(x,y)=  \Arccos( \langle x,y\rangle + \sqrt{ 1-|x|^2}\sqrt{ 1-|y|^2}),
\quad
\langle x,y\rangle = \sum_{i=1}^d x_iy_i.
\end{equation}

 The left part (1) of Figure~\ref{fig:NeedletCompare} illustrates this concentration: it displays the influence of a point $x$ to the
 value of $B_jf$ at a second point $y_0$, namely the values of
 $B_j(x,y_0)$ for a fixed $y_0$ and $j=4$. This influence peaks at $y_0$ and
 vanishes exponentially fast to $0$ as soon as one goes away from
 $y_0$.
The central part (2) of Figure~\ref{fig:NeedletCompare} shows the
modification of the concentration when $j$ is set to a large value ($j=6$).
The right part (3) of Figure~\ref{fig:NeedletCompare} shows
 the lack of  concentration of $B_j$ when the cut-off function $a$
 used is far from being $C^\infty$. The resulting kernel still peaks
 at $y_0$ but the value of $B_jf$ at $y_0$ is strongly influenced by
 values far away from $y_0$.

 \begin{figure}
   \centering

\begin{tabular}{cccccc}
$B_j(x,y_0)$
&\parbox[c]{4cm}{\includegraphics[width=4cm]{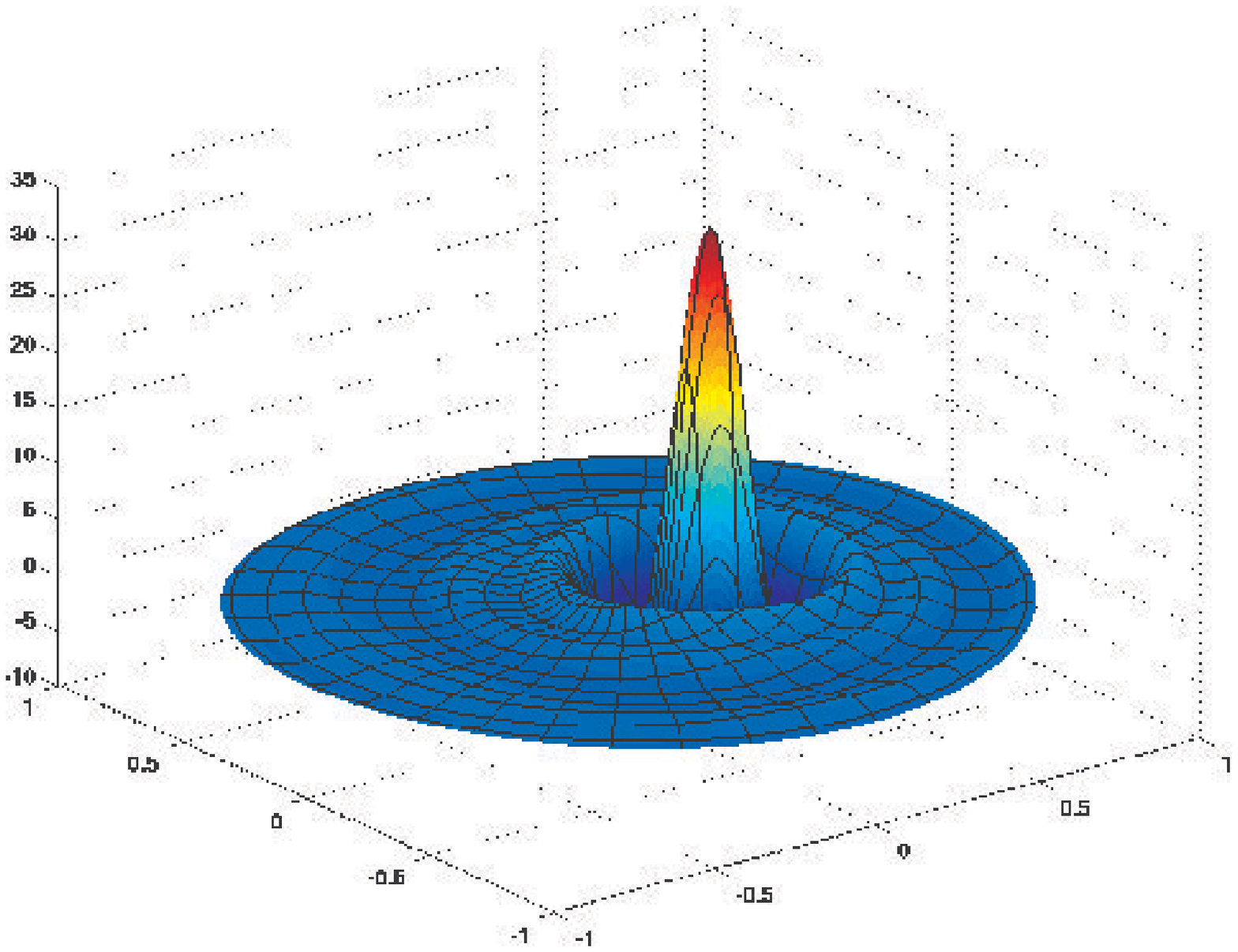}}
 && \parbox[c]{4cm}{\includegraphics[width=4cm]{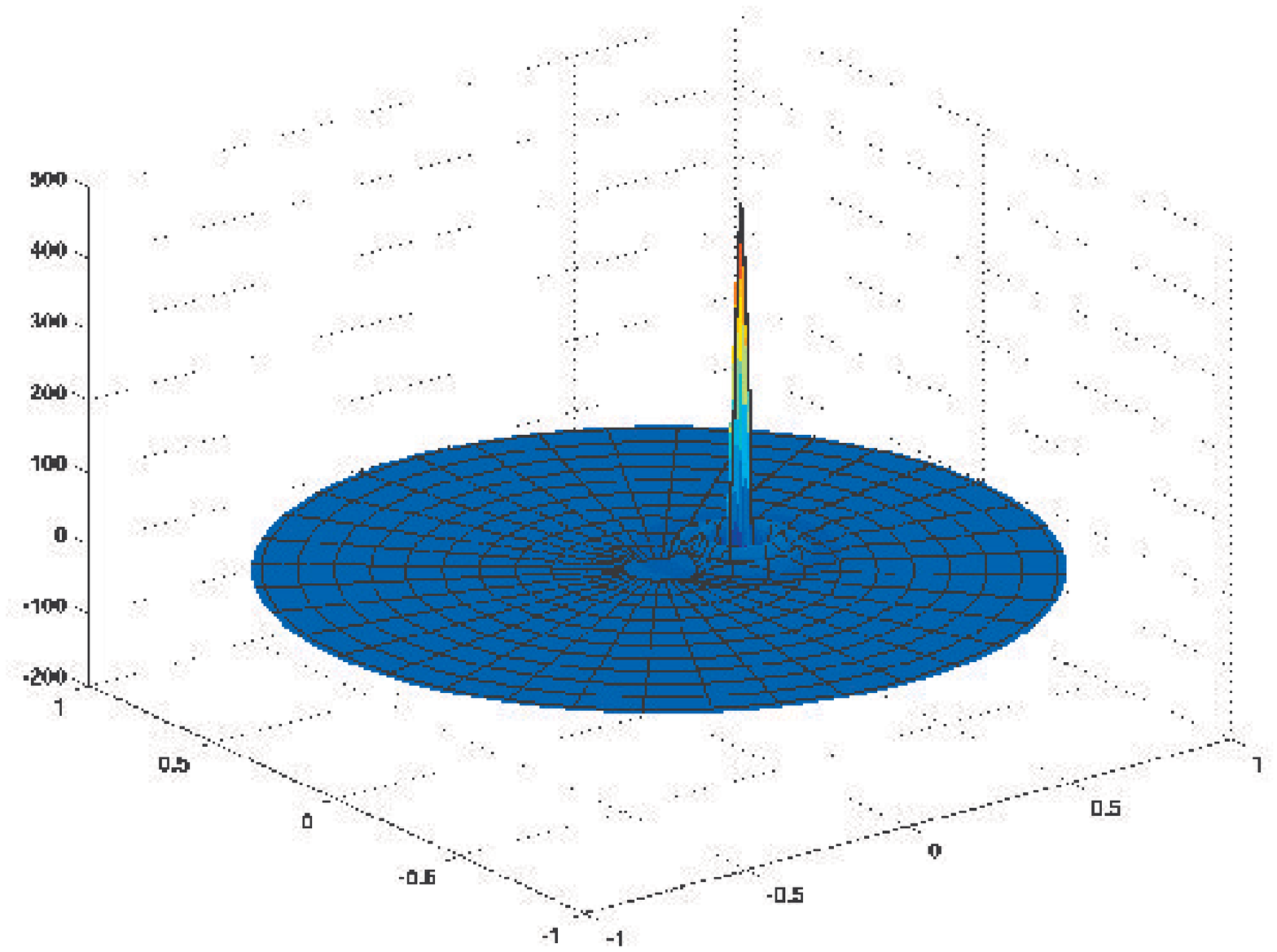}}
 && \parbox[c]{4cm}{\includegraphics[width=4cm]{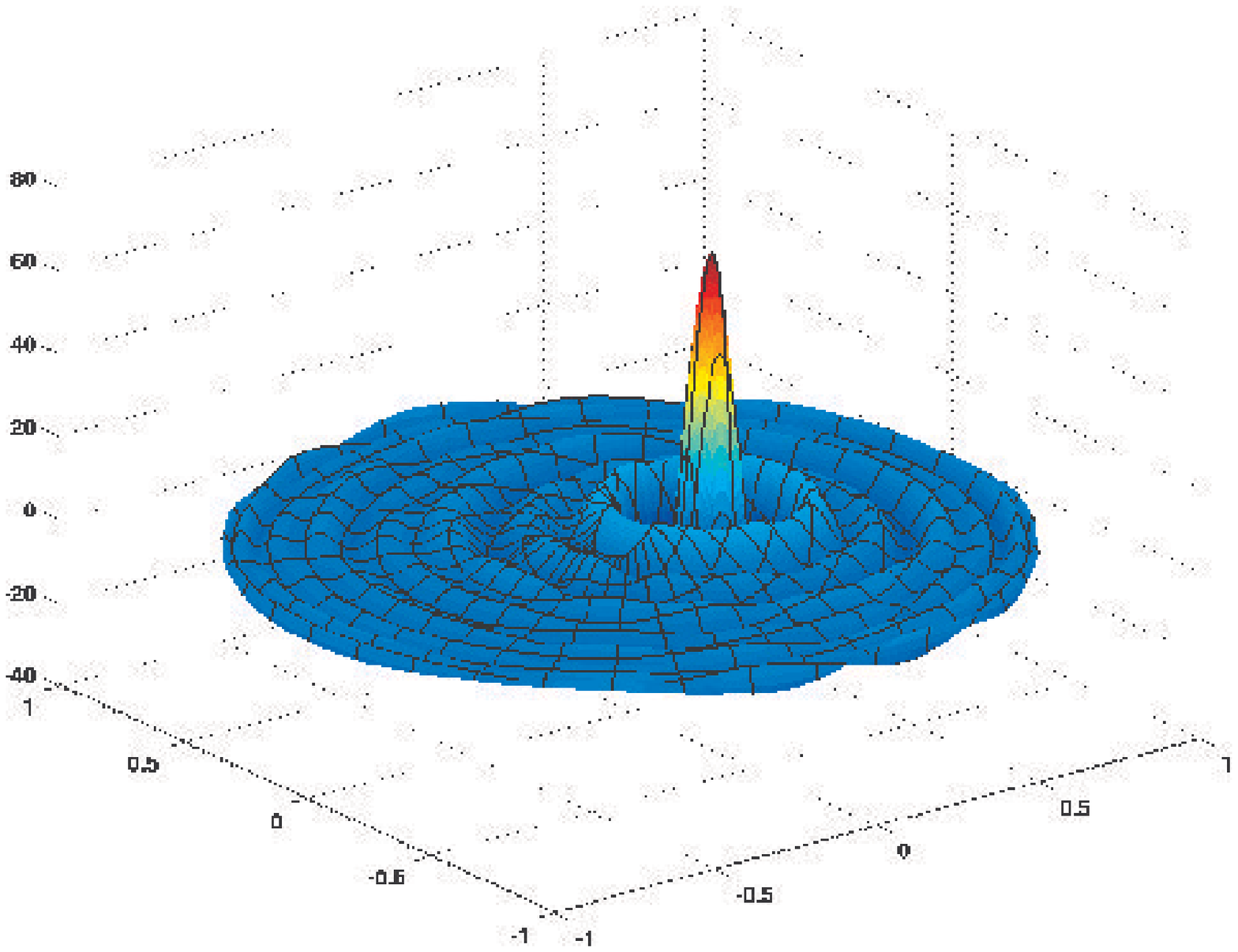}}\\
j & 4 && 6 && 4\\
$a$ & $C^\infty$&& $C^\infty$ && non smooth \\
& (1) && (2) && (3)
\end{tabular}

   \caption{Smoothing kernel $B_j(x,y_0)$ for a fixed $y_0$ for (1) a
     $C^\infty$ cut-off function $a$ with $j=4$
     , (2) for the same $a$ with $j=6$ and (3) for
 a 
     non smooth cut-off function $a$ with $j=4$ (Low quality figure due to
    arXiv constraint)
}
   \label{fig:NeedletCompare}
 \end{figure}


\begin{rem}\label{boulesphere}
At this point it is important to notice the following correspondence which will be
used in the sequel.
For $\bS_+^d = \{(x,z) \in \bR^d \times \bR^+ , ~|x|^2_d + z^2 =1\}$, we have  the natural bijection
 \[
 x\in B^d \mapsto \tilde{x} =(x, \sqrt{1-|x|^2})
\quad\mbox{ and }\quad
d(x,y) = d_{\bS_+^d }(\tilde{x}, \tilde{y}),
\]
where  $ d_{\bS_+^d }$ is the geodesic distance on  $\bS_+^d. $
 \end{rem}

\subsubsection{Approximation}\label{approx}

Here we discuss the approximation properties of the operators $\{A_j\}$.
We will show that in a sense they are operators of ``near best" polynomial $L^p$-approximation.
Denote by $E_n(f,p)$ the best $L^p$-approximation of $f\in \bL^p(B^d)$ from $\Pi_n$,
i.e.
\begin{equation}\label{def-Enp}
E_n(f,p) = \inf_{P \in \Pi_n} \|f-P\|_p.
\end{equation}
Estimate (\ref{EqFond}) yields (cf. (\cite[Proposition 4.5]{pxuball})
$$
\int_{B^d}|A_j(x, y)|dy \le c^*,
\quad x\in B^d, \; j\ge 0,
$$
where $c^*$ is a constant depending only on $d$.
Therefore, the operators $A_j$ are (uniformly) bounded on $\bL^1(B^d)$ and $\bL^\infty(B^d)$,
and hence, by interpolation, on $\bL^p(B^d)$, $1\le p\le \infty$, i.e.
\begin{equation}\label{bound-oper}
\|A_jf\|_p\le c^*\|f\|_p, \quad f\in \bL^p(B^d).
\end{equation}
On the other hand, since $a(t)=1$ on $[0, 1/2]$ we have
$A_jP=P$ for $P\in \Pi_{2^{j-1}}$.
We use this and (\ref{bound-oper}) to obtain, for $f\in \bL^p(B^d)$
and an arbitrary polynomial $P\in \Pi_{2^{j-1}}$,
$$
\| f-A_j f\|_p
= \|f-  P + P-A_jf\|_p
\le \|f- P\|_p + \|A_j(P-f)\|_p
\le (1+ c^*)\|f- P \|_p = K\|f- P \|_p.
$$
Consequently,
$\|f-A_jf\|_p \le  K E_{2^{j-1}}(f,p)$.
In the opposite direction,
evidently, $A_j f \in \Pi_{2^j}$ and hence
$
E_{2^j}(f,p) \leq  \|f-A_j f\|_p.
$
Therefore, for $f\in \bL^p(B^d)$, $1\le p\le\infty$,
\begin{equation}\label{Apoly}
E_{2^j}(f,p) \leq  \|f-A_jf\|_p \le  K E_{2^{j-1}}(f,p).
\end{equation}
These estimates do not tell the whole truth about the approximation power of $A_j$.
It is rather obvious that because of the superb localization of the kernel $A_j(x, y)$
the operator $A_j$ provides far better rates of approximation than $E_{2^{j-1}}(f,\infty)$
away from the singularities of $f$.

In contrast, the kernel $S_j(x, y)= \sum_{0\le k \le 2^j}L_k(x, y)$ of
the orthogonal projector $S_j$ onto $\Pi_{2^j}$
is poorly localized and hence $S_j$ is useless for approximation in $L^p$, $p\ne 2$.
This partially explains the fact that the traditional SVD estimators perform
poorly in $L^p$-norms when $p\ne 2$.

\subsubsection{Splitting  procedure}
 Let us define
\[
C_j(x,y) =\sum_{m} \sqrt{a\Big(\frac m{2^j}\Big)}  L_m (x,z) )
\quad\mbox{and}\quad
D_j(x,y)=\sum_{m} \sqrt{b\Big(\frac m{2^j}\Big)}  L_m (x,z).
\]
Note that $C_j$ and $D_j$ have the same localization as the localization of $A_j$, $B_j$
in (\ref{EqFond}) (cf. \cite{pxuball}).
Using (\ref{pro}), we get the desired splitting
  \begin{equation}\label{D1}
  A_j(x,y) = \int_{B^d}  C_j(x,z) C_j(z,y) dz
\end{equation}
   and
  \begin{equation}\label{D2}
   B_j(x,y) = \int_{B^d} D_j(x,z) D_j(z,y) dz.
  \end{equation}
Obviously
$z \mapsto  C_j(x,z) C_j(z,y)$
is a polynomial of degree $< 2^{j+1}$ and
$z \mapsto D_j(x,z) D_j(z,y)$ is a polynomial of degree $< 2^{j+2}$.
The next step is to discretize the kernels $A_j(x, y)$ and $B_j(x, y)$.

\subsubsection{Cubature formula and discretization}
\label{ballsphere}

To construct the needlets on $B^d$ we need one more ingredient -
a cubature formula on $B^d$ exact for polynomials of a given degree.

Recall first the bijection between the ball
$B^d $ (equipped with the usual Lebesgue measure)
and the unit  upper hemisphere in $\bR^{d+1}$:
\[
\bS^d_+ = \{  (x,y), ~x\in \bR^d, ~0\leq y\leq 1, ~~|x|_d^2 +y^2 =1 \}
\]
equipped with $d\sigma$ the usual surface measure.

\[
T:   (x,y) \in \bS^d_+  \mapsto   x\in \bR^d
\]
and
\[
T^{-1} :  x \in \bR^d   \mapsto \tilde{x}=  (x, \sqrt{1-|x|_d^2}) \in
\bS^d_+
\]
Applying the substitution $T$ one has (see e.g.  \cite{cubxu})
\begin{equation}
\int_{\bS^d_+ } F(x,y) d\sigma(x,y) =  \int_{B^d} F(x, \sqrt{1-|x|^2})  \frac{dx}{\sqrt{1-|x|^2}}
\end{equation}
and hence for $ f: \bR^d  \mapsto  \bR$
\begin{equation}
\int_{ (x,y) \in S^d_+ } f(x) y d\sigma_d(x,y)
=  \int_{x \in B^d} f(x)  \sqrt{1-|x|^2_d} \frac{dx}{\sqrt{1-|x|^2_d}}
=  \int_{B^d} f(x) dx.
\end{equation}
Therefore, given a cubature formula on $\bS^d$ one can easily derive a cubature formula on $B^d$.
Indeed,
suppose we have a cubature formula on $\bS_+^d$ exact for all polynomials of degree $n+1$,
i.e, there exist $ \tilde{\chi}_n   \subset \bS^d_+ $ and
coefficients  $\lambdaquadra_{\tilde \xi} >0$, $ \tilde{\xi} \in  \tilde{\chi}_n$,  such that
\[
\int _{\bS_+^d}P(u) d\sigma(u) =
\sum_{\tilde{\xi}  \in  \tilde{\chi}_n} \lambdaquadra_{ \tilde \xi} P( \tilde \xi)
\quad
\forall P\in \Pi_{n+1}(\bR^{d+1}).
\]
If  $P\in \Pi_{n}(\bR^{d})$ then  $ P(x)y \in \Pi_{n+1}(\bR^{d+1}) $ and hence
\[
\sum_{\tilde \xi \in  \tilde{\chi}_n}   \lambdaquadra_{\tilde \xi} P(\xi)  \sqrt{ 1-\xi^2}
= \int _{\bS^d_+}P(x) y d\sigma
= \int_{B^d}   P(x)   dx.
\]
Thus the projection  $\chi_n$ of $ \tilde{\chi}_n$ on $B^d$  is the set of nodes and
the associated coefficients given by $\lambdaquadra_\xi= \sqrt{ 1-\xi^2}  \lambdaquadra_{\tilde \xi} $
induce a cubature formula on $B^d$ exact for $\Pi_n(\bR^d)$.

The following proposition follows from results in \cite{pxuball} and \cite{cubxu}.


\begin{proposition}\label{prop:CUB}
Let $\{B(\tilde\xi_i,\rho):\,i\in I\}$ be  a maximal family of
disjoint spherical caps of radius $\rho= \tau 2^{-j}$ with
centers on the hemisphere $\bS_+^d$.
Then for sufficiently small $0<\tau\le 1$ the set of points $\chi_j=\{\xi_i: i\in I\}$
obtained by projecting the set $\{\tilde\xi_i:\, i\in I\}$ on $B^d$
is a set of nodes of a cubature formula which is exact for  $\Pi_{2^{j+2}}(B^d)$.
Moreover, the coefficients $\lambdaquadra_{\xi_i}$ of this cubature formula are positive and
$\lambdaquadra_{\xi_i}\sim W_j(\xi_i)2^{-jd}$. Also, the cardinality $\#\chi_j\sim 2^{jd}$.
\end{proposition}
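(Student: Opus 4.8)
The plan is to reduce the statement on $B^d$ to the corresponding (known) cubature statement on the sphere $\bS_+^d$ via the bijection $T$ recalled just above, and to read off the weight asymptotics and the cardinality count from the geometry of a maximal packing by spherical caps. First I would invoke the construction of area-regular (equal-measure-comparable) partitions of $\bS^d$ associated to a maximal family of disjoint caps of radius $\rho=\tau 2^{-j}$: such a family, being maximal, is automatically $2\rho$-covering, so the caps $B(\tilde\xi_i,2\rho)$ cover $\bS_+^d$ while the $B(\tilde\xi_i,\rho)$ are disjoint; standard volume comparison on the sphere then gives $\#I\sim \rho^{-d}\sim 2^{jd}$, which already yields $\#\chi_j\sim 2^{jd}$ once we note that $T$ is a bijection between $\chi_j$ and $\{\tilde\xi_i\}$.

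Next I would quote the positive-weight cubature result on $\bS^d$ (from \cite{pxuball} together with \cite{cubxu}): for $\tau$ small enough, a maximal $\rho$-packing yields nodes $\tilde\xi_i$ and positive weights $\lambdaquadra_{\tilde\xi_i}\sim \rho^d = \tau^d 2^{-jd}$ giving a cubature formula exact for $\Pi_{n+1}(\bR^{d+1})$ with $n+1 = 2^{j+2}$, i.e. exact for all polynomials of degree $\le 2^{j+2}$ on $\bS^d$ (here one must pick $\tau$ so that the cap radius is small relative to the inverse of the desired polynomial degree; this is exactly the role of ``sufficiently small $\tau$''). Then I would apply the transfer mechanism described in the paragraph preceding the proposition: if $P\in\Pi_{2^{j+2}}(\bR^d)$ then $P(x)y\in\Pi_{2^{j+2}+1}(\bR^{d+1})\subset\Pi_{2^{j+3}}(\bR^{d+1})$, so (after possibly adjusting $\tau$ by a harmless constant factor to absorb the ``$+1$'') the spherical cubature integrates $P(x)y$ exactly, and since $\int_{\bS_+^d}P(x)\,y\,d\sigma = \int_{B^d}P(x)\,dx$ we get
\[
\int_{B^d}P(x)\,dx = \sum_{i\in I}\lambdaquadra_{\tilde\xi_i}\,P(\xi_i)\,\sqrt{1-|\xi_i|^2},
\]
so $\chi_j=\{\xi_i\}$ with weights $\lambdaquadra_{\xi_i}:=\lambdaquadra_{\tilde\xi_i}\sqrt{1-|\xi_i|^2}$ is an exact cubature for $\Pi_{2^{j+2}}(B^d)$, and the weights are positive because $\lambdaquadra_{\tilde\xi_i}>0$ and $\sqrt{1-|\xi_i|^2}>0$ (the nodes lie in the open ball; boundary nodes, where $\sqrt{1-|\xi|^2}=0$, contribute nothing and can be discarded).

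Finally, for the weight asymptotics I would compare $\sqrt{1-|\xi_i|^2}$ with $W_j(\xi_i)=2^{-j}+\sqrt{1-|\xi_i|^2}$ over the cap $B(\tilde\xi_i,\rho)$: since the cap has geodesic radius $\sim 2^{-j}$, the quantity $\sqrt{1-|x|^2}$ (which is, up to the identification of Remark~\ref{boulesphere}, the last coordinate on $\bS_+^d$, i.e. essentially $\mathrm{dist}$ to the equator) varies by at most $O(2^{-j})$ across the cap, so $\sqrt{1-|\xi_i|^2}\sim W_j(\xi_i)$ uniformly in $i$ up to absolute constants; combined with $\lambdaquadra_{\tilde\xi_i}\sim 2^{-jd}$ this gives $\lambdaquadra_{\xi_i}\sim W_j(\xi_i)2^{-jd}$. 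I expect the one genuinely delicate point to be the bookkeeping on the polynomial degree and the choice of $\tau$ — making sure that a single small $\tau$ simultaneously guarantees (a) that the packing is fine enough for the spherical cubature of \cite{pxuball,cubxu} to be exact up to the needed degree $2^{j+3}$, and (b) that the $W_j$-comparison constants are uniform; the volume counting and the positivity are routine once the quotation from \cite{pxuball,cubxu} is in place.
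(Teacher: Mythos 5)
Your reduction to the hemisphere and the transfer $\lambdaquadra_{\xi_i}=\sqrt{1-|\xi_i|^2}\,\lambdaquadra_{\tilde\xi_i}$ correctly reproduce the mechanism the paper sketches in \S\ref{ballsphere}, and your degree bookkeeping and the cardinality count are fine. Note, however, that the paper gives no proof of this proposition (it defers entirely to \cite{pxuball} and \cite{cubxu}), and the construction in those references does not in fact proceed by this naive transfer for the reason below.

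The genuine gap is in your last step, the weight asymptotics. From the observation that $\sqrt{1-|x|^2}$ varies by $O(2^{-j})$ over a cap you may conclude $\sqrt{1-|\xi_i|^2}\le W_j(\xi_i)$ and $W_j(\xi_i)\le \sqrt{1-|\xi_i|^2}+2\cdot 2^{-j}$, but \emph{not} the two-sided equivalence $\sqrt{1-|\xi_i|^2}\sim W_j(\xi_i)$: the latter fails precisely when $\sqrt{1-|\xi_i|^2}\ll 2^{-j}$, i.e.\ for caps centered within geodesic distance $o(2^{-j})$ of the equator of $\bS_+^d$ (recall $\sqrt{1-|\xi|^2}=\langle\tilde\xi,\tilde p\rangle$ is comparable to the distance from $\tilde\xi$ to the equator), and a maximal packing of the closed hemisphere will in general contain such caps. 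For such a node the transferred weight $\sqrt{1-|\xi_i|^2}\,\lambdaquadra_{\tilde\xi_i}\sim \sqrt{1-|\xi_i|^2}\,2^{-jd}$ is $o\bigl(W_j(\xi_i)2^{-jd}\bigr)$, so the lower bound in $\lambdaquadra_{\xi_i}\sim W_j(\xi_i)2^{-jd}$ is lost; in the extreme case of an equatorial center the weight is $0$ and positivity fails as well (discarding such nodes does not help: nodes with tiny but nonzero $\sqrt{1-|\xi_i|^2}$ still violate the lower bound, and they cannot be removed without destroying exactness). This lower bound is not cosmetic --- it is exactly what feeds the two-sided norm equivalence (\ref{norm-est}) for $\phi_{j,\xi}=\sqrt{\lambdaquadra_\xi}\,C_j(\cdot,\xi)$, on which the risk bounds of Section 4 rest. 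The way \cite{pxuball} actually obtains the stated weights is by constructing the cubature directly on $(B^d,dx)$ with the metric $d(\cdot,\cdot)$, via Marcinkiewicz--Zygmund inequalities and a perturbation argument; this yields weights comparable to the $dx$-measures of the associated cells, $\lambdaquadra_{\xi}\sim|B(\xi,\rho)|\sim 2^{-jd}W_j(\xi)$, so the factor $W_j(\xi)$ enters as a cell volume rather than through the Jacobian $\sqrt{1-|\xi|^2}$. Alternatively, one can salvage your transfer by modifying the packing so that every center lies at geodesic distance $\gtrsim 2^{-j}$ from the equator, which forces $\sqrt{1-|\xi_i|^2}\gtrsim 2^{-j}$ and hence $\sqrt{1-|\xi_i|^2}\sim W_j(\xi_i)$; either way, an ingredient beyond what you wrote is required.
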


\subsubsection{Needlets}\label{needlets}

Going back to identities \eqref{D1} and \eqref{D2} and applying the cubature formula
described in Proposition~\ref{prop:CUB}, we get
\begin{align*}
A_j(x,y) &= \int_{B^d}  C_j(x,z) C_j(z,y) dz
=\sum_{\xi \in \chi_j} \lambdaquadra_{\xi} C_j (x,\xi)C_j ( y,\xi)
\quad\mbox{and}\\
B_j(x,y) &= \int_{B^d}  D_j(x,z) D_j(z,y) dz
=\sum_{\xi \in \chi_j} \lambdaquadra_{\xi} D_j (x,\xi)D_j ( y,\xi).
\end{align*}
We define the {\bf father needlets} $\phi_{j,\xi}$ and
the {\bf mother needlets} $\psi_{j,\xi}$ by
\[
 \phi_{j,\xi} (x) =  \sqrt{ \lambdaquadra_{\xi}} C_j (x,\xi)
 \quad\text{and}\quad
\psi_{j,\xi} (x) =  \sqrt{ \lambdaquadra_{\xi}} D_j (x,\xi),
\quad \xi\in\chi_j, \;j\ge 0.
\]
We also set
$\psi_{-1,0}=\frac{\ONE_{B^d}}{|B^d|}$ and $\chi_{-1}=\{0\}$.
From above it follows that
\[
A_j(x,y) =\sum_{\xi \in \chi_j}  \phi_{j,\xi} (x) \phi_{j,\xi} (y),
\quad
B_j(x,y) =\sum_{\xi \in \chi_j}  \psi_{j,\xi} (x) \psi_{j,\xi} (y).
\]
Therefore,
\begin{equation}\label{rep-Aj}
 A_j f (x) = \int_{B^d} A_j(x,y) f(y) dy=
 \sum_{\xi \in \chi_j} \langle f, \phi_{j,\xi} \rangle  \phi_{j,\xi}
 =\sum_{\xi \in \chi_j} \alpha_{j,\xi}   \phi_{j,\xi},
 \quad \alpha_{j,\xi}= \langle f, \phi_{j,\xi} \rangle.
\end{equation}
and
\begin{equation}\label{rep-Bj}
 B_j f (x) = \int_{B^d} B_j(x,y) f(y) dy=
 \sum_{\xi \in \chi_j} \langle f, \psi_{j,\xi} \rangle  \psi_{j,\xi}
 =\sum_{\xi \in \chi_j} \beta_{j,\xi}   \psi_{j,\xi},
\quad \beta_{j,\xi}=\langle f, \psi_{j,\xi} \rangle.
\end{equation}

By (\ref{rep-Aj}) and (\ref{pro1}) we have
 \[
\|\phi_{j,\xi} \|_2^2 \geq \langle A_j\phi_{j,\xi}  , \phi_{j,\xi}  \rangle
=\langle \sum_{\xi' \in \chi_j}
\langle  \phi_{j,\xi} ,  \phi_{j,\xi'} \rangle\phi_{j,\xi'}, \phi_{j,\xi}\rangle
=\sum_{\xi' \in \chi_j}| \langle  \phi_{j,\xi} ,
\phi_{j,\xi'}  \rangle |^2 \geq  \|\phi_{j,\xi} \|_2^4
\]
and hence
\begin{equation}\label{NL2}
 \|\phi_{j,\xi} \|_2 \leq 1.
\end{equation}
From (\ref{orthog-dec}) and the fact that
$\sum_{j\ge 0} b(t2^{-j})=1$ for $t\in [1, \infty)$, it readily follows that
\[
f = 
\sum_{j\ge -1} \sum_{\xi\in
\chi_j } \langle f,  \psi_{j,\xi} \rangle  \psi_{j,\xi},
\quad f\in\bL^2(B^d),
\]
and taking inner product with $f$ this leads to
\[
\|f\|_2^2 =  
\sum_j \sum_{\xi\in \chi_j } |\langle f,  \psi_{j,\xi} \rangle|^2,
\]
which in turn shows that the family $\{\psi_{j,\xi}\}$ is a tight frame for $\bL^2(B^d)$
and consequently
\begin{equation}\label{NL3}
\|\psi_{j,\xi} \|_2^2 \geq  \|\psi_{j,\xi} \|_2^4,
\quad\mbox{i.e.}\quad
\|\psi_{j,\xi} \|_2  \leq 1.
\end{equation}

Observe that using the properties of the cubature formula from Proposition~\ref{prop:CUB},
estimate
(\ref{EqFond}) leads to the localization estimate (cf. \cite{pxuball}):
\begin{equation}\label{needlet-local}
|\phi_{j,\xi}(x)|, | \psi_{j,\xi}(x)| \leq C_M \frac{ 2^{jd/2}}{  \sqrt{W_j(\xi)} (1+2^j d(x,\xi))^M}
\quad \forall M>0.
\end{equation}
Nontrivial lower bounds for the norms of the needlets are obtained in \cite{pxukball}.
More precisely, in \cite{pxukball} it is shown that for $0<p \le \infty$
\begin{equation}\label{norm-est}
\|\psi_{j,\xi}\|_p \sim \|\phi_{j,\xi}\|_p
\sim \Big(\frac{2^{jd}}{W_j(\xi)} \Big)^{1/2-1/p},
\quad \xi\in\chi_j.
\end{equation}

We next record some properties of needlets which will be needed later on.
For convenience we will denote in the following by
$h_{j, \xi}$ either $\phi_{j,\xi}$ or $\psi_{j,\xi}$.


\begin{theorem}\label{thm:stability}
Let $1\leq p \leq \infty$ and $j\ge 1$.
The following inequalities hold
\begin{align}
\sum_{\xi \in \chi_j}  \|   h_{j,\xi} \|^p_p  &\leq c2^{j(dp/2 +  (p/2  - 2)_+)}
\quad\mbox{if}\quad  p\neq 4, \label{B}\\
\sum_{\xi \in \chi_j} \|h_{j,\xi} \|^p_p  &\leq cj 2^{jdp/2}
\quad\mbox{if}\quad  p=4, \label{BI}
\end{align}
and for any collection of complex numbers $\{d_\xi\}_{\xi\in \chi_j}$
\begin{equation}\label{B2prime}
\| \sum_{\xi  \in \chi_j}d_\xi h_{j,\xi} \|_p
\leq c \Big(\sum_{\xi\in \chi_j} |d_\xi |^p \| h_{j,\xi} \|^p_p\Big)^{1/p}.
\end{equation}
Here $c>0$ is a constant depending only on $d$, $p$, and $\tau$.
\end{theorem}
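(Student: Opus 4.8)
The plan is to reduce all three inequalities to the pointwise localization estimate \eqref{needlet-local}, the norm estimates \eqref{norm-est}, and the geometry of the node set $\chi_j$ from Proposition~\ref{prop:CUB}. Throughout, write $h_{j,\xi}$ for either $\phi_{j,\xi}$ or $\psi_{j,\xi}$, recall $W_j(\xi)\sim 2^{j}\lambdaquadra_\xi\cdot 2^{j(d-1)}$, i.e. $\lambdaquadra_\xi\sim W_j(\xi)2^{-jd}$, and recall $\#\chi_j\sim 2^{jd}$. The key geometric fact I would isolate first is a summation lemma: for the $\tau 2^{-j}$-separated set $\chi_j$ and any $M>d$,
\[
\sum_{\xi\in\chi_j}\frac{\lambdaquadra_\xi}{(1+2^jd(x,\xi))^M}\le c,\qquad x\in B^d,
\]
and, dually, for any fixed point $\xi$ the integral $\int_{B^d}(1+2^jd(x,\xi))^{-M}W_j(x)^{-1}\,dx\sim 2^{-jd}\lambdaquadra_\xi^{-1}$ up to constants (using the doubling property of the measure $W_j(x)\,dx$ on the ball, equivalently on $\bS_+^d$ via Remark~\ref{boulesphere}). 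These are standard consequences of the near-exponential decay and the maximality of the cap packing; I would state them and refer to \cite{pxuball}.

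For \eqref{B} and \eqref{BI}: by \eqref{norm-est}, $\|h_{j,\xi}\|_p^p\sim (2^{jd}/W_j(\xi))^{p/2-1}\cdot 2^{jd}/W_j(\xi)\cdot 2^{-jd}\lambdaquadra_\xi$... more cleanly, $\|h_{j,\xi}\|_p^p\sim (2^{jd})^{p/2}\,W_j(\xi)^{-p/2}\,\lambdaquadra_\xi$ after writing one factor $W_j(\xi)^{-1}2^{jd}\sim\lambdaquadra_\xi^{-1}$... I would just write $\|h_{j,\xi}\|_p^p\sim 2^{jdp/2}W_j(\xi)^{-p/2+1}$ and then sum. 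Since on $\bS_+^d$ the values $W_j(\xi)$ for $\xi\in\chi_j$ behave like $2^{-j}+\mathrm{dist}(\tilde\xi,\mathrm{equator})$, the sum $\sum_{\xi\in\chi_j}W_j(\xi)^{1-p/2}$ is comparable to $2^{jd}$ times an average, i.e. to $2^{j(d-1)}\int_0^1 (2^{-j}+u)^{1-p/2}\,du$ (the extra $2^{j(d-1)}$ counts caps in the ``horizontal'' directions, the remaining one-dimensional integral is over the distance to the equator). Evaluating this elementary integral gives three regimes: it is $O(1)$-bounded-below-by-a-constant when $p<4$ (so the sum is $\sim 2^{j(d-1)}$, consistent with $2^{jdp/2+(p/2-2)_+\cdot}$... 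I will match exponents carefully), it is $\sim j$ when $p=4$, and it is $\sim 2^{j(p/2-2)}$ when $p>4$. Multiplying back by $2^{jdp/2}$ yields exactly the right-hand sides of \eqref{B} and \eqref{BI}. The case $p=\infty$ is handled separately and trivially from $\|h_{j,\xi}\|_\infty\le c\,2^{jd/2}W_j(\xi)^{-1/2}\le c\,2^{jd}$ and $\#\chi_j\sim 2^{jd}$.

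For \eqref{B2prime}: I would prove it by interpolation between $p=1$ and $p=\infty$ after establishing it at the endpoints, or more robustly prove it directly for all $p$ via the pointwise bound. Write $F=\sum_\xi d_\xi h_{j,\xi}$. Using \eqref{needlet-local}, $|F(x)|\le c\sum_\xi|d_\xi|\,2^{jd/2}W_j(\xi)^{-1/2}(1+2^jd(x,\xi))^{-M}$. Split this as a product via Hölder with exponents $p,p'$, pulling out a factor $(1+2^jd(x,\xi))^{-M/p'}$ summed against the ``$O(1)$'' summation lemma above (this needs $M/p'>$ the relevant exponent, which is arranged by taking $M$ large), leaving $\left(\sum_\xi|d_\xi|^p 2^{jdp/2}W_j(\xi)^{-p/2}(1+2^jd(x,\xi))^{-M}\right)$ to the $1/p$. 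Then raise to the $p$-th power, integrate in $x$, swap sum and integral, and apply the dual integral estimate $\int_{B^d}(1+2^jd(x,\xi))^{-M}\,dx\le c\,2^{-jd}W_j(\xi)^{1/2}\cdots$ — precisely, $\int_{B^d}(1+2^jd(x,\xi))^{-M}W_j(x)^{-1/2}W_j(x)^{1/2}\,dx$, handled by the measure-doubling estimate — to recover $2^{-jd}W_j(\xi)\sim\lambdaquadra_\xi$ up to the weight, which combines with $2^{jdp/2}W_j(\xi)^{-p/2}$ to give exactly $\|h_{j,\xi}\|_p^p$ by \eqref{norm-est}. This yields $\|F\|_p^p\le c\sum_\xi|d_\xi|^p\|h_{j,\xi}\|_p^p$.

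The main obstacle is the geometry/measure-theory bookkeeping: correctly identifying that summing $W_j(\xi)^{1-p/2}$ over the $2^{-j}$-net on the hemisphere produces the one-dimensional integral $\int_0^1(2^{-j}+u)^{1-p/2}\,du$ (this is where the edge effect of the ball, and the threshold $p=4$, enters), and ensuring the doubling estimates for the weight $W_j(x)\,dx$ are invoked with the right powers so that the Hölder split in \eqref{B2prime} closes. Once the summation lemma and its dual integral counterpart are in hand, the rest is routine; these lemmas are exactly the ones developed in \cite{pxuball} and \cite{pxukball}, so I would cite them rather than reprove them.
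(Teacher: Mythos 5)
Your overall strategy is sound and, for \eqref{B} and \eqref{BI}, it is essentially the paper's: the paper also lifts $\chi_j$ to $\bS^d_+$, uses the disjointness of the caps $B_{\bS^d}(\tilde\xi,\rho)$ to dominate the sum over the separated set by $\int_{\bS^d_+}(2^{-j}+\langle\tilde u,\tilde p\rangle)^{-r}d\sigma(\tilde u)$, and reduces to the one-dimensional integral $\int_{2^{-j}}^2 t^{-r}dt$ with $r=p/2-1$, which is exactly where the three regimes and the threshold $p=4$ come from. For \eqref{B2prime} you genuinely diverge: the paper dominates $|h_{j,\xi}|$ by $c\,\cM_t\tONE_{B(\xi,\rho)}$ and invokes the Fefferman--Stein vector-valued maximal inequality \eqref{max-ineq} plus the disjointness of the balls $B(\xi,\rho)$, whereas you propose a direct Schur/H\"older argument (split $(1+2^jd(x,\xi))^{-M}$ into an $L^{p'}$ and an $L^p$ factor, use the uniform bound on $\sum_\xi(1+2^jd(x,\xi))^{-M}$, then integrate in $x$ using $\int_{B^d}(1+2^jd(x,\xi))^{-M}dx\le c\,2^{-jd}W_j(\xi)$). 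That argument does close and is more elementary; the maximal-function route buys nothing extra for this scalar inequality (it pays off only in vector-valued/Triebel--Lizorkin settings), so your alternative is perfectly legitimate.

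One concrete warning on the bookkeeping you deferred: your two intermediate asymptotics are each off by a power of $2^{jd}$ or $2^{j}$, and the errors do \emph{not} cancel. From \eqref{norm-est} one has $\|h_{j,\xi}\|_p^p\sim\big(2^{jd}/W_j(\xi)\big)^{p/2-1}=2^{jd(p/2-1)}W_j(\xi)^{1-p/2}$, not $2^{jdp/2}W_j(\xi)^{1-p/2}$; and since each of the $\sim 2^{j}$ ``shells'' at distance $\approx u$ from the equator contains $\sim 2^{j(d-1)}$ nodes, converting the shell sum to $\int_0^1\cdots du$ costs a Jacobian $2^{j}$, so $\sum_{\xi\in\chi_j}W_j(\xi)^{1-p/2}\sim 2^{jd}\int_0^1(2^{-j}+u)^{1-p/2}du$, not $2^{j(d-1)}\int_0^1\cdots$. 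With the correct exponents the product is $2^{jdp/2}\int_0^1(2^{-j}+u)^{1-p/2}du$ and \eqref{B}, \eqref{BI} follow; with the exponents as you wrote them the product is too large by $2^{j(d-1)}$ and contradicts the claimed bound, so this is a slip you must repair rather than a harmless normalization. (Similarly, for $p=\infty$ the sharp bound is $\|h_{j,\xi}\|_\infty\le c\,2^{j(d+1)/2}$, which is what the proof of Theorem~\ref{thm:upper} actually uses, rather than the weaker $2^{jd}$.)
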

To make our presentation more fluid we relegate the proof of this theorem to the appendix.

\subsection{Linear needlet estimator}\label{needlet-estim}

Our motivation for introducing the estimator described below
is the excellent approximation power of the operators $A_j$ defined in \S\ref{defAB}
and its compatibility with the Radon SVD.
We begin with the following representation of the unknown function $f$
$$
f = \sum_{k, l, i} \langle f, f_{k,l,i}\rangle\, f_{k,l,i},
$$
where the sum is over the index set
$\{(k,l,i): k\ge 0, 0\le l \le k, l\equiv k (\mod 2) ,1 \leq i \leq N_{d-1}(l)\}$.
Combining this with the definition of $A_j$ we get
$$
A_jf = \sum_{\xi \in  \chi_j} \langle  f ,  \phi_{j, \xi}(y)\rangle\, \phi_{j, \xi}
= \sum_{\xi \in  \chi_j} \alpha_{j, \xi}\phi_{j, \xi},
$$
where
$$
\alpha_{j, \xi} = \langle f, \phi_{j, \xi} \rangle
=\sum_{k, l, i} \gamma^{j,\xi}_{k,l,i} \langle f, f_{k,l,i}\rangle
=\sum_{k, l, i} \gamma^{j,\xi}_{k,l,i}\frac{1}{\lambda_k} \langle R(f), g_{k,l,i}\rangle_{\mu}
=\sum_{k, l, i} \gamma^{j,\xi}_{k,l,i}\frac{1}{\lambda_k} \int
g_{k,l,i} R(f) d\mu.
$$
Here
$\gamma^{j,\xi}_{k,l,i}=\langle f_{k,l,i} , \phi_{j,\xi}(y)\rangle$
can be precomputed.

It seems natural to us to define an estimator $\widehat{f}_j$ of the unknown function $f$ by
\begin{equation}\label{estimator-f}
\widehat{f}_j = \sum_{\xi \in \chi_j}\widehat{\alpha}_{j,\xi} \phi_{j,\xi},
\end{equation}
where
\begin{equation}\label{estimator-alpha}
\widehat{\alpha}_{j, \xi}
= \sum_{k, l, i}
\gamma^{j,\xi}_{k,l,i}\frac 1{\lambda_k }\int g_{k,l,i}dY.
\end{equation}
Here the summation is over
$\{(k,l,i): 0\le k < 2^j, 0\le l \le k, l\equiv k (\mod 2) ,1 \leq i \leq N_{d-1}(l)\}$
and $j$ is a parameter.

Some clarification is needed here.
The father and mother needlets, introduced in \S\ref{needlets},
are closely related but play different roles.
Both $\phi_{j, \xi}$ and $\psi_{j, \xi}$ have superb localization,
however, the mother needlets $\{\psi_{j, \xi}\}$ have multilevel structure
and, therefore, are an excellent tool for nonlinear n-term approximation of functions on the ball,
whereas the father needlets are perfectly well suited for linear approximation.
So, there should be no surprise that we use the father needlets for our linear estimator.

Furthermore, even if the needlets are central in the analysis of the estimator,
the estimator $\widehat{f}_j$ can be defined without them. Indeed,
\begin{align*}
\widehat{f}_j &= \sum_{\xi \in \chi_j} \sum_{k, l, i}
\gamma^{j,\xi}_{k,l,i}\frac 1{\lambda_k }\int g_{k,l,i}dY\,
\phi_{j,\xi}
\intertext{as all the sum are finite, their order can be interchanged,
yielding}
\widehat{f}_j  &= \sum_{k, l, i}
\frac 1{\lambda_k }\int g_{k,l,i}dY\,
\sum_{\xi \in \chi_j} \gamma^{j,\xi}_{k,l,i}
\phi_{j,\xi} = \sum_{k,l,i} \frac 1{\lambda_k }\int g_{k,l,i}dY\, A_j
f_{k,l,i}\\
\intertext{and thus the estimator is obtained by a simple
  componentwise multiplication on
 the SVD coefficients}
\widehat{f}_j  &=  \sum_{k,l,i} \frac {a\left(\frac{k}{2^j}\right)}{\lambda_k }\int g_{k,l,i}dY\,
f_{k,l,i}.
\end{align*}

However, as will be shown in the sequel, the precise choice of this smoothing allows to consider $\L_p$ losses and precisely  because of the localization properties of the atoms this approach will be extended using a thresholding procedure in a further work, using this time the mother wavelet.

\section{The risk of the needlet estimator}
\setcounter{equation}{0}

In this section we estimate the risk of the needlet estimator introduced above in
terms of the Besov smoothness of the unknown function.

\subsection{Besov spaces}

We introduce the Besov spaces of positive smoothness on the ball as  spaces
of $L^p$-approximation from algebraic polynomials.
As in \S\ref{approx} we will denote by $E_n(f,p)$ the best $L^p$-approximation of
$f\in \bL^p(B^d)$ from $\Pi_n$.
We will mainly use the notations from \cite{pxukball}.


\begin{definition}\label{defbesov}$\cite{pxukball}$
Let $0 <s<\infty$, $1\leq p \leq \infty$, and $0<q\leq \infty$.
The space $B^{s,0}_{p,q}$ on the ball is defined as the space of all functions
$f \in \bL^p(B^d)$ such that
$$
|f|_{B^{s,0}_{p,q}}=\Big( \sum_{n\geq 1} (n^s E_n(f,p))^q \frac 1n \Big)^{1/q} <\infty
\quad\mbox{if $q<\infty$,}
$$
and $|f|_{B^{s,0}_{p,q}} =\sup_{n\geq 1} n^s E_n(f,p) <\infty$ if $q=\infty$.
The norm on $B^{s,0}_{p,q}$ is defined by
\[
\|f\|_{B^{s,0}_{p,q}} =\|f\|_p + |f|_{B^{s,0}_{p,q}}.
\]
\end{definition}


\begin{rem}
From the monotonicity of $\{E_n(f,p)\}$ it readily follows that
$$
\|f\|_{B^{s,0}_{p,q}}  \sim \|f\|_p
+ \Big(\sum_{j\ge 0}  ( 2^{js} E_{2^j}(f,p))^q \Big)^{1/q}
$$
with the obvious modification when $q=\infty$.
\end{rem}

There are several different equivalent norms on the Besov space $B^{s,0}_{p,q}$.


\begin{theorem}\label{thm:Besov}
With indexes $s, p, q$ as in the above definition
the following norms are equivalent to the Besov norm $\|f\|_{B^{s,0}_{p,q}}$:
\begin{align*}
\mbox{$(i)$}\quad
\cN_1(f)&=\|f\|_p +\|  ( 2^{js} \|f-A_jf\|_p)_{j\ge 0}  \|_{l^q},\\
\mbox{$(ii)$}\quad
\cN_2(f)&=\|f\|_p +\|  ( 2^{js}  \|B_jf\|_p)_{j\ge 1}  \|_{l^q},\\
\mbox{$(iii)$}\quad
\cN_3(f)&=\|f\|_p +\|  ( 2^{js}\sum_{\xi \in \chi_j}
|\langle f, \psi_{j,\xi} \rangle |^p\| \psi_{j,\xi} \|^p_p)_{j\ge -1}  \|_{l^q}.
\end{align*}
\end{theorem}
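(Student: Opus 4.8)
The plan is to establish the chain of equivalences $\|f\|_{B^{s,0}_{p,q}} \sim \cN_1(f) \sim \cN_2(f) \sim \cN_3(f)$, using the approximation machinery already developed in Sections~\ref{approx} and \ref{needlets} together with the stability inequalities of Theorem~\ref{thm:stability}. The backbone is the two-sided estimate (\ref{Apoly}), $E_{2^j}(f,p) \leq \|f-A_jf\|_p \leq K E_{2^{j-1}}(f,p)$, which, together with the monotonicity of $\{E_n(f,p)\}$ and the standard discretization remark following Definition~\ref{defbesov}, gives $\cN_1(f)\sim\|f\|_{B^{s,0}_{p,q}}$ almost immediately; one only has to observe that the $\ell^q$ (quasi-)norms of the sequences $(2^{js}E_{2^j}(f,p))_j$ and $(2^{js}E_{2^{j-1}}(f,p))_j$ are comparable up to a factor $2^s$.

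First I would prove $\cN_1\sim\cN_2$. Since $B_jf = A_{j+1}f - A_jf$, one direction is the triangle inequality: $\|B_jf\|_p \le \|f-A_{j+1}f\|_p + \|f-A_jf\|_p$, so $\cN_2 \lesssim \cN_1$. For the reverse, I would use the telescoping identity $f - A_jf = \sum_{m\ge j} B_mf$ (valid in $L^p$ by (\ref{pro2}) and the localization-based $L^p$-boundedness (\ref{bound-oper})), hence $\|f-A_jf\|_p \le \sum_{m\ge j}\|B_mf\|_p$; applying a discrete Hardy inequality (summation by parts against the geometric weight $2^{js}$) converts the $\ell^q$-norm of $(2^{js}\sum_{m\ge j}\|B_mf\|_p)_j$ into that of $(2^{ms}\|B_mf\|_p)_m$ at the cost of a constant depending on $s>0$. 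This gives $\cN_1\lesssim\cN_2$.

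Then I would prove $\cN_2\sim\cN_3$. For $\cN_3\lesssim\cN_2$: by (\ref{rep-Bj}), $B_jf = \sum_{\xi\in\chi_j}\beta_{j,\xi}\psi_{j,\xi}$ with $\beta_{j,\xi}=\langle f,\psi_{j,\xi}\rangle$; the lower-bound direction of a needlet norm equivalence (the "$\gtrsim$" counterpart of (\ref{B2prime}), which on a fixed level $j$ follows from the quasi-orthogonality and localization (\ref{needlet-local}) of the needlets together with the cubature weights $\lambdaquadra_\xi\sim W_j(\xi)2^{-jd}$ of Proposition~\ref{prop:CUB}) yields $\big(\sum_\xi|\beta_{j,\xi}|^p\|\psi_{j,\xi}\|_p^p\big)^{1/p}\lesssim\|B_jf\|_p$, using also $\|\psi_{j,\xi}\|_p\sim(2^{jd}/W_j(\xi))^{1/2-1/p}$ from (\ref{norm-est}); taking $\ell^q$-norms over $j$ and adding $\|f\|_p$ gives $\cN_3\lesssim\cN_2$. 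For $\cN_2\lesssim\cN_3$: apply (\ref{B2prime}) of Theorem~\ref{thm:stability} to the expansion $B_jf=\sum_\xi\beta_{j,\xi}\psi_{j,\xi}$, obtaining $\|B_jf\|_p\lesssim\big(\sum_\xi|\beta_{j,\xi}|^p\|\psi_{j,\xi}\|_p^p\big)^{1/p}$, then take $\ell^q$-norms. Finally, to control $\|f\|_p$ by $\cN_3$ (needed so the full norms, not just seminorms, match) I would write $f = \sum_{j\ge -1}B_jf$ (with $B_{-1}f := A_0f = \langle f,\psi_{-1,0}\rangle\psi_{-1,0}$), apply (\ref{B2prime}) level by level, and sum using $s>0$ to dominate the resulting geometric series by $\cN_3(f)$; conversely $\|f\|_p$ is already a summand in each $\cN_i$.

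The main obstacle is the lower bound $\big(\sum_\xi|\langle f,\psi_{j,\xi}\rangle|^p\|\psi_{j,\xi}\|_p^p\big)^{1/p}\lesssim\|B_jf\|_p$, i.e.\ the converse of (\ref{B2prime}) at a single scale, since Theorem~\ref{thm:stability} as stated only provides the upper bound. This requires either invoking the known dual/lower-frame estimates for ball needlets from \cite{pxuball,pxukball}, or reconstructing them here from the near-exponential localization (\ref{needlet-local}), the cubature properties of Proposition~\ref{prop:CUB}, and a maximal-function argument (the $\beta_{j,\xi}$ can be recovered by testing $B_jf$ against the reproducing kernel and the near-orthogonality of needlets at the same level). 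Everything else is the standard Hardy-inequality/telescoping bookkeeping that is routine once $s>0$ is used to sum the geometric weights; I would present those steps compactly and concentrate the detailed argument on the single-scale needlet-norm equivalence.
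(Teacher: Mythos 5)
Your treatment of $(i)$ and $(ii)$ is exactly the paper's argument: $\cN_1\sim\|f\|_{B^{s,0}_{p,q}}$ from \eqref{Apoly}, then $\cN_2\lesssim\cN_1$ by the triangle inequality applied to $B_j=A_{j+1}-A_j$, and $\cN_1\lesssim\cN_2$ by telescoping $f-A_jf=\sum_{l\ge j}B_lf$ and a discrete Hardy/convolution inequality exploiting $s>0$. The only divergence is in $(iii)$: the paper does not prove the equivalence $\cN_3\sim\|f\|_{B^{s,0}_{p,q}}$ at all, but simply cites \cite[Theorem 5.4]{pxukball}, whereas you attempt to derive it. Your easy direction $\cN_2\lesssim\cN_3$ via \eqref{B2prime} is fine, and you correctly isolate the real obstacle, namely the single-scale lower bound $\bigl(\sum_{\xi}|\langle f,\psi_{j,\xi}\rangle|^p\|\psi_{j,\xi}\|_p^p\bigr)^{1/p}\lesssim\|B_jf\|_p$, which Theorem~\ref{thm:stability} does not supply; one caution there is that $\langle f,\psi_{j,\xi}\rangle\neq\langle B_jf,\psi_{j,\xi}\rangle$ (the level-$j$ needlets carry $\sqrt{b(m/2^j)}$ rather than $b(m/2^j)$), so the coefficients must be controlled by local maximal averages of the neighbouring blocks $B_{j-1}f,B_jf,B_{j+1}f$ — precisely the argument of \cite[Theorem 5.4]{pxukball} that the paper invokes. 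So your proposal is correct and, where it goes beyond the paper, it honestly flags the external input rather than hiding it; filling that input in would make your proof self-contained where the paper's is not.
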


\begin{proof}
The equivalence $\cN_1(f)\sim \|f\|_{B^{s,0}_{p,q}}$ is immediate from (\ref{Apoly}).

To prove that $\cN_2(f)\sim \cN_1(f)$, we recall that
$B_j = (A_{j+1}-A_j)$ (see \S\ref{defAB}) and hence
$
\|B_jf\|_p\le \|f- A_{j+1}f\|_p+\|f-A_jf\|_p
$
which readily implies
$
\cN_2(f)\le c\cN_1(f).
$
In the other direction, we have
$$
 \|f-A_jf\|_p = \| \sum_{l=j}^\infty B_l f\|_p  \leq
\sum_{l=j}^\infty \| B_lf\|_p.
$$
Assuming that $\cN_2(f)<\infty$ we have
$\| B_l (f)\|_p =\alpha_l 2^{-ls}$ with $\{\alpha_l\} \in l^q$.
Hence
$$
\sum_{l=j}^\infty \| B_l (f)\|_p =\sum_{l=j}^\infty \alpha_l 2^{-ls}
= 2^{-js}\sum_{l=j}^\infty \alpha_l 2^{-(l-j)s} =: 2^{-js} \beta_j
$$
and by the convolution inequality $\{\beta_j\} \in l^q$.
Therefore, $\cN_1(f)\le c\cN_2(f)$.

For the equivalence $\cN_3(f)\sim \|f\|_{B^{s,0}_{p,q}}$, see \cite[Theorem 5.4]{pxukball}.
\end{proof}

 \subsubsection{Comparison with the ``standard" Besov spaces}\label{compbesov}

 The classical Besov space $B^{s}_{p, q} (B^d)$  is defined through
 the $L^p$-norm of the finite differences:
 \[
\Delta_h f(x) =( f(x+h)- f(x)) \ONE_{x\in B^d}  \ONE_{x +h \in B^d}
\]
and in general
\[
\Delta^N_h f  (x)
=\ONE_{x\in B^d}  \ONE_{x + Nh \in B^d} \sum_{k=0}^N (-1)^{N-k} {N\choose k}f(x+kh).
\]
Then the $N$th modulus of smoothness in $L^p$ is defined by
\[
\omega^N_p(f,t) = \sup_{|h| \leq t} \| \Delta^N_h f  \|_p, \quad t>0.
\]
For $0 <s <N$, $1\le p\le \infty$, and $0<q \leq \infty$,
the classical Besov space $B^s_{p,q}$ is defined by the norm
\[
\|f \|_{B^s_{p,q}}
= \|f \|_p + \Big(\int_0^1  [t^s \omega^N_p(f,t)]^q \frac{dt}t\Big)^{1/q}
\sim \|f \|_p +\Big(\sum_{j=0}^\infty [2^{js}\omega^N_p(f,2^{-j})]^q\Big)^{1/q}
\]
with the usual modification for $q=\infty$.
It is well known that the definition of $B^s_{p,q}$ does not depend on $N$ as long as
$s<N$ \cite{HJKS}.
Moreover, the embedding
\begin{equation}
 B^{s}_{p, q} \subseteq B^{s,0}_{p, q},
\end{equation}
is immediate from the estimate
$
E_n(f,p)\leq c\omega^N_p(f, 1/n)
$
\cite{HJKS}.

\subsection{Upper bound for the risk of the needlet estimator}\label{upper-bound}


\begin{theorem}\label{thm:upper}
Let $1\leq p \leq \infty, 0<s<\infty$, and
assume that $f \in B^{s,0}_{p,\infty}$ with $\| f \|_{B^{s,0}_{p,\infty}} \leq M$.
Let
\[
\widehat{f}_J =  \sum_{\xi \in \chi_J}\widehat{\alpha}_{J,\xi} \phi_{j,\xi}
\]
be the needlet estimator introduced in \S\ref{needlet-estim},
where $J$ is selected depending on the parameters as described below.
\begin{enumerate}
\item
If $M 2^{-J(s+d)}\sim \eps$ when $p=\infty$, then
\[
\bE  \| f- \widehat{f_J} \|_\infty \leq  c_\infty  M^{^{\frac d{s+d}}}
 \eps^{\frac s{s+d}}  \sqrt{\log M/\eps}.
\]
\item
If $M2^{-Js} \sim \eps 2^{J(d-2/p)}$ when $4\leq p<\infty$, then
\[
\bE  \| f- \widehat{f_J} \|_p^p \leq c_p M^{ \frac{(d-2/p)p}{s+d-2/p}}
\eps^{\frac{sp}{s+d-2/p}},
\]
where when $p=4$ there is an additional factor $\ln (M/\eps)$ on the right.
\item
If $M2^{-Js}\sim \eps 2^{J(d-1/2)}$ when $1\leq p<4$, then
\[
\bE  \| f- \widehat{f_j} \|_p^p \leq c_p M^{\frac{(d-1/2)p}{s+d-1/2}}
\eps^{\frac{sp}{s+d-1/2}}.
\]
\end{enumerate}

\end{theorem}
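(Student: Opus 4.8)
The plan is to use the standard bias–variance decomposition of the quadratic (or $L^p$) risk, exploiting the two key structural ingredients already established: the near-best approximation property $\|f-A_Jf\|_p\le KE_{2^{J-1}}(f,p)\le K M\,2^{-Js}$ for $f\in B^{s,0}_{p,\infty}$ (from \eqref{Apoly} and the Besov definition), and the variance/stability estimates of Theorem~\ref{thm:stability}, which control $\sum_{\xi\in\chi_J}\|h_{J,\xi}\|_p^p$, together with the needlet norm estimates \eqref{norm-est} and the Bernstein-type inequality \eqref{B2prime}. Write $\widehat f_J-f=(A_Jf-f)+(\widehat f_J-A_Jf)$; the first term is the deterministic bias, bounded as above, and the second is the stochastic term $\sum_{\xi\in\chi_J}Z_{J,\xi}\phi_{J,\xi}$ where, as computed in \S\ref{needlet-estim}, $\widehat\alpha_{J,\xi}-\alpha_{J,\xi}=Z_{J,\xi}$ is centered Gaussian with variance $\eps^2\sum_{k,l,i}(\gamma^{J,\xi}_{k,l,i})^2\lambda_k^{-2}$. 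Using $\lambda_k^{-2}\sim k^{d-1}\lesssim 2^{J(d-1)}$ on the support $k<2^J$ and $\sum_{k,l,i}(\gamma^{J,\xi}_{k,l,i})^2=\|\phi_{J,\xi}\|_2^2\le 1$, one gets $\mathrm{Var}(Z_{J,\xi})\lesssim \eps^2 2^{J(d-1)}$; this is the source of the ill-posedness exponent $\tfrac{d-1}{2}$.

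For the case $p=\infty$, I would bound $\|\widehat f_J-A_Jf\|_\infty\le \max_\xi|Z_{J,\xi}|\cdot\sup_x\sum_{\xi}|\phi_{J,\xi}(x)|$; the localization \eqref{needlet-local} with $M$ large gives $\sup_x\sum_\xi|\phi_{J,\xi}(x)|\lesssim 2^{Jd/2}\max_\xi W_J(\xi)^{-1/2}\cdot(\text{bounded overlap})$, and more carefully $\|\phi_{J,\xi}\|_\infty\sim (2^{Jd}/W_J(\xi))^{1/2}$, with $W_J(\xi)\gtrsim 2^{-J}$ near the boundary, contributing an extra $2^{J/2}$; so the stochastic sup is of order $2^{Jd/2}\cdot 2^{J/2}$ times the Gaussian magnitude, and a standard maximal-inequality bound $\bE\max_\xi|Z_{J,\xi}|\lesssim \eps 2^{J(d-1)/2}\sqrt{\log\#\chi_J}\lesssim \eps 2^{J(d-1)/2}\sqrt{J}$ yields a stochastic term $\lesssim \eps\, 2^{J(d-1)/2}2^{Jd/2}2^{J/2}\sqrt{J}=\eps\,2^{Jd}\sqrt{J}$. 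Balancing $M2^{-Js}\sim\eps 2^{Jd}$, i.e. $M2^{-J(s+d)}\sim\eps$, gives $2^{-Js}\sim (\eps/M)^{s/(s+d)}$ and the claimed rate $M^{d/(s+d)}\eps^{s/(s+d)}\sqrt{\log(M/\eps)}$; the $\sqrt{\log M/\eps}$ is exactly the $\sqrt J$ from the Gaussian maximum after substituting the balance.

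For $1\le p<\infty$, apply \eqref{B2prime} to the stochastic term: $\bE\|\widehat f_J-A_Jf\|_p^p\le c\,\bE\sum_{\xi\in\chi_J}|Z_{J,\xi}|^p\|\phi_{J,\xi}\|_p^p=c\sum_\xi \bE|Z_{J,\xi}|^p\|\phi_{J,\xi}\|_p^p\lesssim (\eps^2 2^{J(d-1)})^{p/2}\sum_\xi\|\phi_{J,\xi}\|_p^p$, using that a centered Gaussian has $p$-th moment comparable to its standard deviation raised to $p$. Now invoke \eqref{B} (or \eqref{BI} when $p=4$): $\sum_\xi\|\phi_{J,\xi}\|_p^p\lesssim 2^{J(dp/2+(p/2-2)_+)}$ (times $J$ if $p=4$). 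Thus the stochastic risk is $\lesssim \eps^p 2^{Jp(d-1)/2}2^{Jdp/2}2^{J(p/2-2)_+}=\eps^p 2^{J(dp-p/2+(p/2-2)_+)}$. A short case split on the sign of $p/2-2$ shows the exponent equals $p(d-2/p)$ when $p\ge4$ and $p(d-1/2)$ when $p<4$; balancing this stochastic term against the bias $M^p 2^{-Jsp}$ then produces exactly the two stated thresholds for $J$ and, after eliminating $2^{-J}$, the two claimed rates (with the extra $\ln(M/\eps)$ for $p=4$ coming from \eqref{BI}).

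The main obstacle I anticipate is not the algebra of balancing exponents but the precise control of the Gaussian stochastic term: verifying that $\sum_{k,l,i}(\gamma^{J,\xi}_{k,l,i})^2\lambda_k^{-2}$ is genuinely of order $\eps^2 2^{J(d-1)}$ uniformly in $\xi$ (the upper bound is easy via $\|\phi_{J,\xi}\|_2\le1$ and the monotonicity of $\lambda_k^{-2}$, but one must be sure no worse boundary behavior enters through $W_J(\xi)$), and, for $p=\infty$, correctly combining the maximal inequality for $\#\chi_J\sim 2^{Jd}$ Gaussians with the pointwise localization bound \eqref{needlet-local} so that the bounded-overlap property of the needlet system is used correctly rather than a crude union bound over all $x$. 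For $p<\infty$ the clean inequality \eqref{B2prime} sidesteps this, which is precisely why Theorem~\ref{thm:stability} was isolated; so the real work is concentrated in the $p=\infty$ endpoint and in justifying the interchange of expectation and the (finite) needlet sum.
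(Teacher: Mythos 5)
Your proposal is correct and follows essentially the same route as the paper's proof: the same bias bound via Theorem~\ref{thm:Besov}, the same uniform variance bound $\sigma^2_{J,\xi}\le c\,\eps^2 2^{J(d-1)}$ obtained from $\|\phi_{J,\xi}\|_2\le 1$, inequality (\ref{B2prime}) combined with (\ref{B})/(\ref{BI}) for $p<\infty$, Pisier's maximal inequality together with $\max_\xi\|\phi_{J,\xi}\|_\infty\le c\,2^{J(d+1)/2}$ at the endpoint $p=\infty$, and the identical balancing of exponents. The obstacle you flag about possible boundary degradation of the variance does not in fact arise: the $W_J(\xi)$ dependence enters only through the norms $\|\phi_{J,\xi}\|_p$ and is already absorbed into Theorem~\ref{thm:stability}.
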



\begin{rems}
\item
It will be shown in a forthcoming paper that the following rates of convergence are,
in fact, minimax, i.e. there exist positive constants $c_1$ and $c_2$ such that
\begin{align*}
\sup_{\| f \|_{B^{s,0}_{p,\infty}} \leq M}\inf_{\tilde f \, {\rm estimator}}\bE\| f- \tilde{f} \|_p^p
&\ge c_1\max \{\eps^{\frac{sp}{s+d-2/p}},\; \eps^{\frac{sp}{s+d-1/2}}\},\\
\sup_{\| f \|_{B^{s,0}_{\infty,\infty}} \leq M}\inf_{\tilde f \, {\rm estimator}} \bE \| f- \tilde{f} \|_\infty
&\ge c_2\eps^{\frac s{s+d}}\sqrt{\log 1/\eps}.
\end{align*}

\item
The case $p=2$ above corresponds to the standard SVD method which involves Sobolev spaces.
In this setting, minimax rates have already been established
(cf.  \cite{dicma}, \cite{MR1984890}   \cite{cavalier_tsybakov},  \cite{cgpt},
\cite{MR1769957},  \cite{MR2047686},  \cite{MR1872847});
these rates are $\eps^{\frac{2s}{s+d-1/2}}$.
Also, it has been shown that the SVD algorithms yield minimax rates.
These results extend (using straightforward comparisons of norms) to $L^p$ losses for $p<4$,
but still considering the Sobolev ball $\{\| f \|_{B^{s,0}_{2,\infty}} \leq M\}$
rather than the Besov ball $\{\| f \|_{B^{s,0}_{p,\infty}} \leq M\}$.
Therefore, our results can be viewed as an extension of the above results,
allowing a much wider variety of regularity spaces.

\item
The Besov spaces involved in our bounds are in a sense well adapted to our method.
However, the embedding results from Section \ref{compbesov} shows that the bounds from
Theorem~\ref{thm:upper} hold in terms of the standard Besov spaces as well.
This means that in using the Besov spaces described above,  our results are but stronger.

\item
In the case $p\ge 4$ we exhibit here new minimax rates of convergence,
related to the ill posedness coefficient of the inverse problem $\frac{d-1}2$
along with edge effects induced by the geometry of the ball.
These rates have to be compared with similar phenomena occurring in other inverse problems
involving Jacobi polynomials (e.g. Wicksell problem), see \cite{kppw}.

\end{rems}

\subsection{Proof of Theorem~\ref{thm:upper}}

Assume  $f\in B^{s,0}_{p,\infty}$ and $\| f \|_{B^{s,0}_{p,\infty}} \leq M$.
Then by Theorem~\ref{thm:Besov},
\begin{equation}\label{Aj<M}
\|A_j  f -f  \|_p \leq  c\|f\|_{  B^{s,0}_{p,\infty}}2^{-js}
\le cM2^{-js}.
\end{equation}
Now from
 \[
dY = R f d \mu + \eps dW
\]
we have
\begin{align*}
 \int  g_{k,l,i} \,dY &= \int_Z  R f \, g_{k,l,i} \, d \mu  + \eps \int
 g_{k,l,i}  \,dW = \int_{B^d}   f  \,R^* g_{k,l,i}\,  dx  +  \eps\,
 Z_{k,l,i} \\
&=   \lambda_k  \int_{B^d}   f \, f_{k,l,i} \, dx  +  \eps \, Z_{k,l,i}
\end{align*}
and hence
\[
\frac 1{\lambda_k }  \int  g_{k,l,i} \, dY
= \int_{B^d}   f\, f_{k,l,i}dx  +  \frac{\eps}{\lambda_k} \, Z_{k,l,i}.
\]
On account of (\ref{estimator-alpha}) this leads to
\begin{align*}
\widehat{\alpha}_{j, \xi}
&=\sum_{k, l, i} \gamma^{j,\xi}_{k,l,i}\int_{B^d}ff_{k,l,i}dx
+  \sum_{k, l, i}\gamma^{j,\xi}_{k,l,i}\frac{\eps}{\lambda_k} \, Z_{k,l,i}\\
&=\alpha_{j, \xi} + Z_{j,\xi}.
 \end{align*}
 Here the summation is over
$\{(k,l,i): 0\le k < 2^j, 0\le l \le k, l\equiv k (\mod 2) ,1 \leq i \leq N_{d-1}(l)\}$.
Since $Z_{k,l,i} $ are independent $N(0,1)$ random variables,
$ Z_{j,\xi} \sim  N(0,\sigma^2_{j,\xi})$
with
\begin{equation}\label{sigma-j}
\sigma^2_{j,\xi}
= \eps^2 \sum_{k, l, i}
|\gamma^{j,\xi}_{k,l,i}|^2 \frac{(k)_d}{\pi^{d-1}2^d k}
\le \frac{(2^j)_{d-1}}{ \pi^{d-1} 2^d}
\leq c2^{j(d-1)} \eps^2
\end{equation}
with $c=(d/2\pi)^{d-1}$.
Here we used that $\{f_{k,l,i}\}$ is an orthonormal basis for $\bL^2$ and hence
$\sum_{k, l, i} |\gamma^{j,\xi}_{k,l,i}|^2 = \|\phi_{j, \xi}\|_2^2\le 1$.


From (\ref{estimator-f})
$\widehat{f}_j = \sum_{\xi \in \chi_j}\widehat{\alpha}_{j,\xi} \phi_{j,\xi} $and using (\ref{Aj<M}) we have, whenever $1 \leq p<\infty$,
\begin{align}\label{est-f-fj-p}
\bE  \| f- \widehat{f_j} \|_p^p &\leq 2^{p-1}
\{\| f- A_jf\|_p^p  + \bE \| A_jf- \widehat{f_j} \|_p^p \}\notag\\
&\leq 2^{p-1}\{cM^p2^{-jsp}  + \bE \| A_jf- \widehat{f_j} \|_p^p \}
\end{align}
and, for $p=\infty$,
\begin{align}\label{est-f-fj-infty}
\bE  \| f- \widehat{f_j} \|_\infty
&\leq \| f- A_jf\|_\infty + \bE \| A_jf- \widehat{f_j} \|_\infty\notag\\
&\leq cM2^{-js}  + \bE \| A_jf- \widehat{f_j} \|_\infty.
\end{align}
On the other hand, using inequality (\ref{B2prime}) of Theorem~\ref{thm:stability}
we obtain, if $1 \leq p<\infty$,
\[
\|A_jf-\widehat{f}_j\|_p^p
= \|\sum_{\xi \in \chi_j} (\alpha_{j,\xi}  - \widehat{\alpha}_{j,\xi}) \phi_{j,\xi}\|_p^p
\leq c\sum_{\xi \in \chi_j} |\alpha_{j,\xi} - \widehat{\alpha}_{j,\xi}|^p
\| \phi_{j,\xi}\|_p^p
\]
and hence
\begin{equation}\label{biaisvariance}
\bE \| A_jf- \widehat{f_j} \|_p^p
\leq c\sum_{\xi \in \chi_j}  \bE |Z_{j,\xi}|^p
\| \phi_{j,\xi}\|_p^p \leq c(\eps 2^{j(d-1)/2})^p\sum_{\xi \in \chi_j}
\| \phi_{j,\xi}  \|_p^p,
\end{equation}
where we used that
$\bE |Z_{j,\xi}|^p\leq c(\eps 2^{j(d-1)/2})^p$.
Similarly, for $p=\infty$,
\[
\| A_jf- \widehat{f_j} \|_\infty
= \|\sum_{\xi \in \chi_j} (\alpha_{j,\xi} - \widehat{\alpha}_{j,\xi})\phi_{j,\xi}\|_\infty
\leq c\max_{\xi \in \chi_j}|\alpha_{j,\xi} - \widehat{\alpha}_{j,\xi}|\| \phi_{j,\xi}  \|_\infty
\]
and hence
\begin{align}
\bE \| A_jf- \widehat{f_j} \|_\infty
&\leq c\bE \{\max_{\xi \in \chi_j}
|Z_{j,\xi}    |  \| \phi_{j,\xi}  \|_\infty  \} \nonumber
\\
& \leq c\eps 2^{j(d-1)/2}\max_{\xi \in \chi_j}
\|\phi_{j,\xi}  \|_\infty \sqrt{2\log_2 2^{jd}}
\leq c\eps 2^{jd}\sqrt{j}. \label{biaisvarianceinfty}
\end{align}
For the second inequality above we used  Pisier's lemma:
If $Z_j \sim N(0,\sigma^2_j)$, $\sigma_j \leq \sigma$, then
\[
\bE(\sup_{1\leq j \leq N} |Z_j| ) \leq \sigma \sqrt{2 \log_2N}.
\]
We also used that
$\max_{\xi \in \chi_j} \|\phi_{j,\xi} \|_\infty  \leq c2^{j(d+1)/2}$,
which follows by inequality (\ref{B}) of Theorem~\ref{thm:stability}.


Combining (\ref{est-f-fj-infty}) and (\ref{biaisvarianceinfty})
we obtain, for $p=\infty$,
\[
\bE  \| f- \widehat{f_j} \|_\infty
\leq  c\{M2^{-js }  +  \eps 2^{jd}   \sqrt{j}\}
\]
and if $M2^{-j(s+d)} \sim \eps$, then
\[
\bE  \| f- \widehat{f_j} \|_\infty
\leq c M^{\frac d{s+d}}\eps^{\frac  s{s+d}}\sqrt{\log M/\eps}.
\]
Similarly, combining estimate (\ref{B}) of Theorem~\ref{thm:stability} with (\ref{biaisvariance})
and inserting the resulting estimate in (\ref{est-f-fj-p})
we obtain in the case $4\leq p<\infty$
\begin{align*}
\bE  \| f- \widehat{f_j} \|_p^p
&\leq c\{M2^{-jsp} +(\eps 2^{j(d-1)/2})^p 2^{j dp/2 + p/2 - 2} \}\\
&= c\{M^p2^{-jsp} +\eps^p 2^{j(dp-2)}\}.
\end{align*}
If $M2^{-js}  \sim \eps 2^{j(d-2/p)}$ this yields
\[
\bE \|f- \widehat{f_j} \|_p^p \leq cM^{\frac{(d-2/p)p}{s+d-2/p}}\eps^{\frac{sp}{s+d-2/p}}.
\]
Accordingly, for $p=4$ we combine inequality (\ref{BI}) with (\ref{biaisvariance})
and insert the result in (\ref{est-f-fj-p}) to obtain
\begin{align*}
\bE \| f- \widehat{f_j} \|_p^p
&\leq c\{M^p 2^{-jsp} +(\eps 2^{j(d-1)/2})^p  j 2^{j dp/2}\}\\
&= c\{M^p2^{-jsp} +j (\eps 2^{j(d-1/2})^p\}
\end{align*}
and if $M 2^{-js}  \sim \eps 2^{j(d-1/2)}$ this yields
\[
\bE  \| f- \widehat{f_j} \|_p^p \leq cM^{ \frac{(d-2/p)p}{s+d-2/p}}\eps^{\frac{sp}{s+d-1/2}}\log{M/\eps}.
\]
Finally, if  $1\leq p<4$ as above we obtain using (\ref{B}), (\ref{biaisvariance}),
and (\ref{est-f-fj-p})
\begin{align*}
\bE  \| f- \widehat{f_j} \|_p^p
&\leq c\{M^p2^{-jsp} +(\eps 2^{j(d-1)/2})^p  2^{j dp/2    } \}\\
& = c\{M^p2^{-jsp} + (\eps 2^{j(d-1/2})^p\}.
\end{align*}
So, if
$M2^{-js}  \sim \eps 2^{j(d-1/p)}$, then
\[
\bE  \| f- \widehat{f_j} \|_p^p \leq c M^{\frac{(d-1/2)p}{s+d-1/2}}
\eps^{\frac{sp}{s+d-1/2}}.
\]
This completes the proof of the theorem.
\qed

\section{Application to the Fan Beam Tomography}

\setcounter{equation}{0}

\subsection{Radon and $2d$ Fan Beam Tomography}

We have implemented this scheme for  $d=2$ in the radiological
setting of Cormack\cite{cormack64:_repres_ii}.
This case  corresponds to the fan beam Radon transform used in
Computed Axial Tomography (CAT). As shown if 
Figure~\ref{fig:CAT}, an object is positioned in the middle of the
device.
X rays are sent from a pointwise source $S(\theta_1)$ located on the
boundary and making an angle $\theta_1$ with the horizontal.
They go through the object and are 
received on the other side on
uniformly sampled array of receptors
$R(\theta_1,\theta_2)$.
The log decay of the energy from the source to a receptor is proportional
to the integral of the density $f$ of the object along the ray and thus one
finally measures
\[
\tilde Rf(\theta_1,\theta_2)  = \int_{e_{\theta_1} + \lambda
    e_{\theta_1-\theta_2} \in B^2
} f(x) d\lambda
\]
with $e_\theta=(\cos \theta,\sin \theta)$
or equivalently the classical Radon transform
\[
Rf(\theta, s) =
\int_{\substack{y\in\theta^\perp\\
s\theta+y\in B^1}} f(s\theta  + y) dy,
\quad \theta \in \bS^{1}, \;s \in[-1,1],
\]
for $\theta=\theta_1-\theta_2$ and $s=\sin \theta_2$.
The device is
then rotated to a different angle $\theta_1$ and the process is repeated.
Note that $d\theta \frac{ds}{(1-s^2)}$ is nothing but the measure
corresponding to the uniform $d\theta_1 d\theta_2$ by the change of
variable that maps $(\theta_1,\theta_2)$ into $(\theta,s)$.

\begin{figure}
  \centering

\includegraphics[width=8cm]{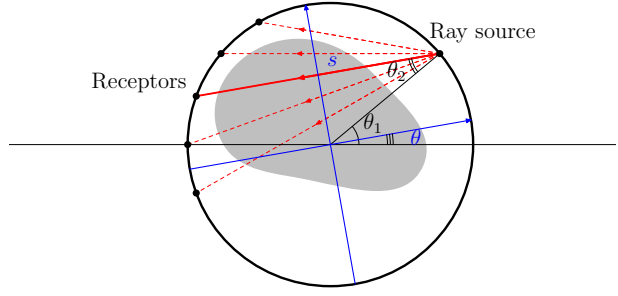}
  
  \caption{Simplified CAT device}
  \label{fig:CAT}
\end{figure}

The Fan Beam Radon SVD basis of the disk is tensorial in polar coordinates:
\[
f_{k,l,i} (r,\theta)
= (2k+2)^{1/2}P_j^{(0,\, l)} (2|r|^2-1)|r|^l Y_{l, i}(\theta),
\;\; 0\leq l \leq k,\; k-l =2j, \; 1\leq i \leq 2,
\]
where $P_{j}^{0,l}$ is the corresponding Jacobi polynomial, and
 $Y_{l,1}(\theta)=c_l \cos(l\theta)$ and $Y_{l,2}(\theta)=c_l
\sin(\theta)$
with $c_0=\frac{1}{\sqrt{2\pi}}$ and $c_l=\frac{1}{\sqrt{\pi}}$ otherwise.
The basis of $S^2\times[-1,1]$ has a similar tensorial structure as it is given by
\[
g_{k,l,i}(\theta, t)
=  [h_k]^{-1/2}(1-t^2)^{1/2} C^{1}_k(t) Y_{l,i }(\theta),
\quad k\ge 0, \; l \ge 0, \; 1\leq i \leq 2,
\]
where $C^{1}_k$ is the Gegenbauer of parameter $1$ and degree $k$. 
The corresponding eigenvalues are
\[
\lambda_k = \frac{2\sqrt{\pi}}{\sqrt{k+1}}.
\]

In this paper, we have considered the theoretical framework of the
white noise model. In this model, we assume that we have access to the
noisy ``scalar product'' $\int g_{k,l,i}dY$, that is to the scalar product
of $Rf$ with the SVD basis $g_{kl,i}$ up to a i.i.d. centered Gaussian
perturbation of known variance $\epsilon^2$.
This white noise model is a convenient statistical framework closely
related to a more classical 
regression problem with a uniform design on $\theta_1$ and
$\theta_2$, which is closer to the implementation in real devices.
In this regression design, one observe
\[
Y_{i_1,i_2} = Rf\left(2\pi \left(\frac{i_1}{N_1}-\frac{i_2}{N_2}\right), \sin
  2 \pi \frac{i_2}{N_2}\right)+\epsilon_{i_1,i_2},\; i_1\le N_1,\; i_2\le N_2
\]
where $N_1$ and $N_2$ gives the discretization level of the angles
$\theta_1$ and $\theta_2$ and $\epsilon_{i_1,i_2}$ is an
i.i.d. centered Gaussian sequence of known variance $\sigma^2$.
Note that this points are not cubature points for the SVD coefficients.
The correspondence between the two model is obtain by replacing
 the noisy scalar product $\int g_{k,l,i}dY$
with
 by the corresponding Riemann sum
\[
 \widehat{\langle Rf, g_{k,l,i}\rangle}  =\frac{1}{N_1\times N_2} \sum_{i_1=0}^{N_1-1} \sum_{i_2=0}^{N_2-1}
g_{k,l,i}\left(2\pi \left(\frac{i_1}{N_1}-\frac{i_2}{N_2}\right), \sin
  2 \pi \frac{i_2}{N_2}\right) Y_{i_1,i_2}
\]
and using the calibration $\epsilon^2 = \sigma^2/(N_1\times N_2)$.
It is proved, for instance in \cite{brownlow96} that the regression model with uniform design and the white noise model are close in the sense of Le Cam's deficiency -which roughly means that any procedure can be transferred from one model to the other, with the same order of risk-. 
The estimator $\widehat{f}_j$  defined in the white noise model by
\[
\widehat{f}_j  =  \sum_{k,l,i} \frac{a\left(\frac{k}{2^j}\right)}{\lambda_k}\int g_{k,l,i}dY\,
f_{k,l,i} = \sum_{k,l,i} \frac {a\left(\frac{k}{2^j}\right)\sqrt{k+1}}{2\sqrt{\pi}}\int g_{k,l,i}dY\,
f_{k,l,i}
\]
is thus replaced in the regression model by
\[
\widehat{f}_j  =  \sum_{k,l,i}
\frac{a\left(\frac{k}{2^j}\right)}{\lambda_k} \widehat{\langle Rf, g_{k,l,i}\rangle}
\,
f_{k,l,i} = \sum_{k,l,i} \frac
{a\left(\frac{k}{2^j}\right)\sqrt{k+1}}{2\sqrt{\pi}}\widehat{\langle Rf, g_{k,l,i}\rangle}
\,
f_{k,l,i}\quad.
\]

\subsection{Numerical results}

To illustrate the advantages of the linear needlet estimator over the
linear SVD estimator, we have compared their performances on a
synthetic example, the classical Logan Shepp phantom\cite{loganshepp},
 for different $L^p$ norm and different noise level, and for
both the white noise model and the regression model. The Logan
Shepp phantom is a synthetic image used as a benchmark in the tomography
community. It is a simple toy model for human body structures
simplified as a
piecewise constant function with
discontinuities along ellipsoids (see  
Figure~\ref{fig:numcomp}). This example
is not regular in a classical sense.
Indeed, it belongs to $B_{1,1}^{1,0}$
but  not to any $B_{p,q}^{s,0}$ with $s>1$. 

To conduct the experiments, we have adopted the following scheme.
Denote by $f$ of the Logan Shepp function presented above,
its decomposition in the SVD basis $f_{k,l,i}$ up to degree $\tilde k=512$
has been approximated with an initial numerical quadrature $\chi$ valid for polynomial of degree
$4\times \tilde k=2048$,
\[
\langle f, f_{k,l,i} \rangle \simeq \sum_{(r_i,\theta_i)\in\chi}
\lambdaquadra_{(r_i,\theta_i)} f(r_i,\theta_i) f_{k,l,i}(r_i,\theta_i) = c_{k,l,i}
\]
and used this value to approximate the original SVD coefficients of $R(f)$, the
noiseless Radon transform of $f$,
\[
\langle R(f), f_{k,l,i} \rangle \simeq \lambda_k c_{k,l,i}.
\]

\textit{In the white noise setting,} for all $k\leq k_0=256$, a noisy observation $\int g_{k,l,i}dY$ is generated by
\[
\int g_{k,l,i}dY \simeq \lambda_k c_{k,l,i} + \epsilon W_{k,l,i}
\]
where $\epsilon$ is the noise level and $W_{k,l,i}$ a iid sequence of
standard Gaussian random variables.
Our linear needlet estimator $\widehat{f}_J$ of level $J=\log_2(k^N)$ is then computed as
\[
\widehat{f}_J = \sum_{k\leq k_0,l,i}
a\left(\frac{k}{2^J}\right)(c_{k,l,i} + \frac{\epsilon}{\lambda_k}
W_{k,l,i}) f_{k,l,i}
\]
while the linear SVD estimator $\widehat{f}^{S}_{k^S}$ of degree
$k^{S}$ is defined as
\[
\widehat{f}^S_{k^S} = \sum_{k\leq k^S,l,i}
(c_{k,l,i} + \frac{\epsilon}{\lambda_k}
W_{k,l,i}) f_{k,l,i}.
\]
We also consider the naive inversion up to degree $k_0$
$\widehat{f}^{I}$ which is equal to $\widehat{f}^{S}_{k_0}$.
The $L^p$ estimation error is measured by reusing the initial
quadrature formula,
\[
\|f-\widehat{f}\|^p \simeq \sum_{(r_i,\theta_i)\in\chi}
\lambdaquadra_{(r_i,\theta_i)} |f(r_i,\theta_i)-\widehat{f}(r_i,\theta_i)|^p.
\]

\textit{In the regression setting}, we have computed the values of the
Radon transform $Rf$ of
$f$ on a equispaced grid for the angles $\theta_1$ and $\theta_2$
specified  by its sizes $N_1$ and $N_2$ using its SVD decomposition up
to $k=\tilde k=512$. We have then defined the
noisy observation as
\[
Y_{i_1,i_2} = (Rf\left(2\pi \left(\frac{i_1}{N_1}-\frac{i_2}{N_2}\right), \sin
  2 \pi \frac{i_2}{N_2}\right)+\epsilon_{i_1,i_2}
\]
with $\epsilon_{i_1,i_2}$ an
i.i.d. centered Gaussian sequence of known variance $\sigma^2$.
The estimated SVD coefficients are obtained
 through the Riemann sums
\[
 \widehat{\langle Rf, g_{k,l,i}\rangle}  =\frac{1}{N_1\times N_2} \sum_{i_1=0}^{N_1-1} \sum_{i_2=0}^{N_2-1}
g_{k,l,i}\left(2\pi \left(\frac{i_1}{N_1}-\frac{i_2}{N_2}\right), \sin
  2 \pi \frac{i_2}{N_2}\right) Y_{i_1,i_2}\quad.
\]
We plug then these values instead of the $\int g_{k,l,i} dY$ in the
previous estimators.

For each noise level and each norm, the best level $J$ and the best
degree $K$ has been selected as the one minimizing the average error
over 50 realizations of the noise. 
Figure~\ref{fig:decaywhitenoise} displays, in a logarithmic scale, the estimation errors
 \(
 \|f-\hat f\|_p
 \)
in the white noise model
plotted against the logarithm of the noise level $\epsilon$.
It shows that,
except for the very low noise case,
both the linear SVD estimator and the linear Needlet estimators reduce
the error over a naive inversion linear SVD estimate up to the maximal
available degree $k_0$.
They also show that the Needlet estimator outperforms
the SVD estimator 
  in a large majority of cases from the norm point of
view and almost always from the visual point of view as shown in Figure~\ref{fig:numcomp}.
The localization of the needlet also 'localizes'  the errors and thus the
``simple'' smooth regions are much better restored with the needlet estimate than with
the SVD because the errors are essentially concentrated along the edges for the
needlet. Remark that the results obtained for the regression model in
Figure~\ref{fig:decayreg} are similar. We have plotted, in a
logarithmic scale, the estimation errors against 
the logarithm of the equivalent of the noise $\epsilon^2$ in the
regression $\sigma^2/(N_1\times N_2)$ with $N_1=N_2=64$ and various $\sigma^2$.
Observe that the curves are similar as long as $\sigma^2$ is not too
small, i.e. as long as the error due to the noise dominate the error
due to the discretization. As can be seen both analysis do agree. This confirms the fact that
the white noise model analysis is relevant for the corresponding fixed design.

\begin{figure}
  \centering
  
  \begin{tabular}{ccc}
\includegraphics[width=5cm]{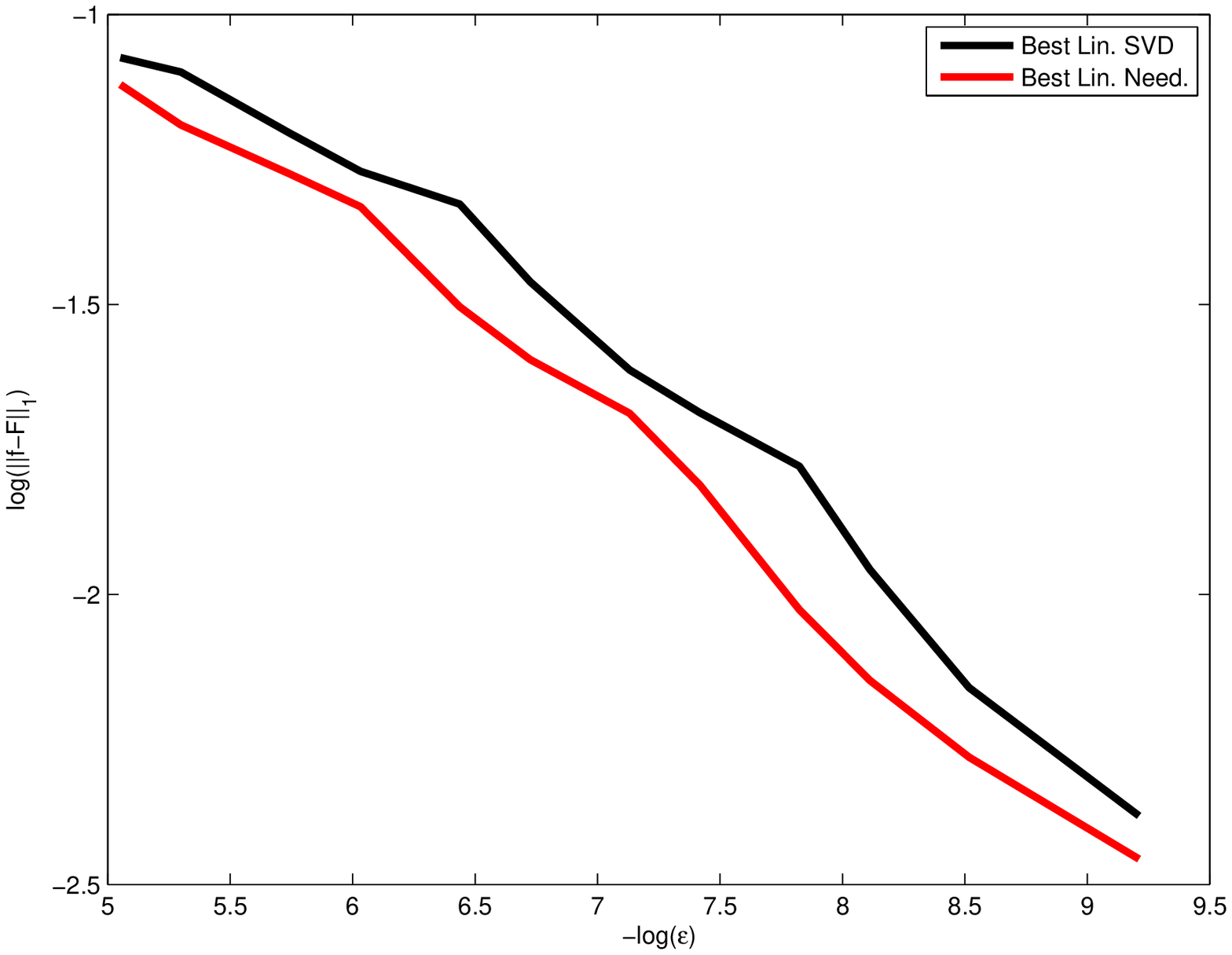}
&&\includegraphics[width=5cm]{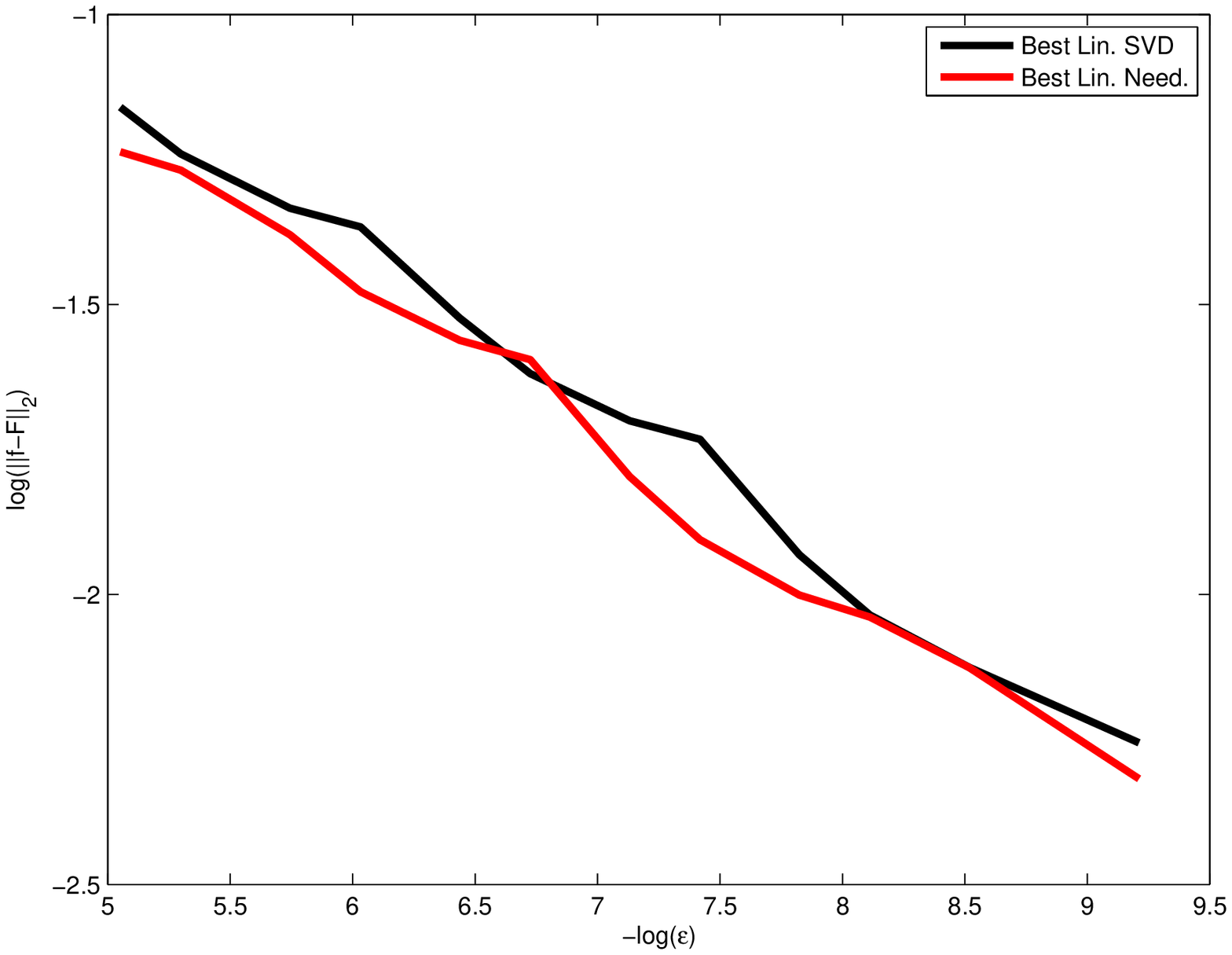}\\
$L^1$ && $L^2$\\
\includegraphics[width=5cm]{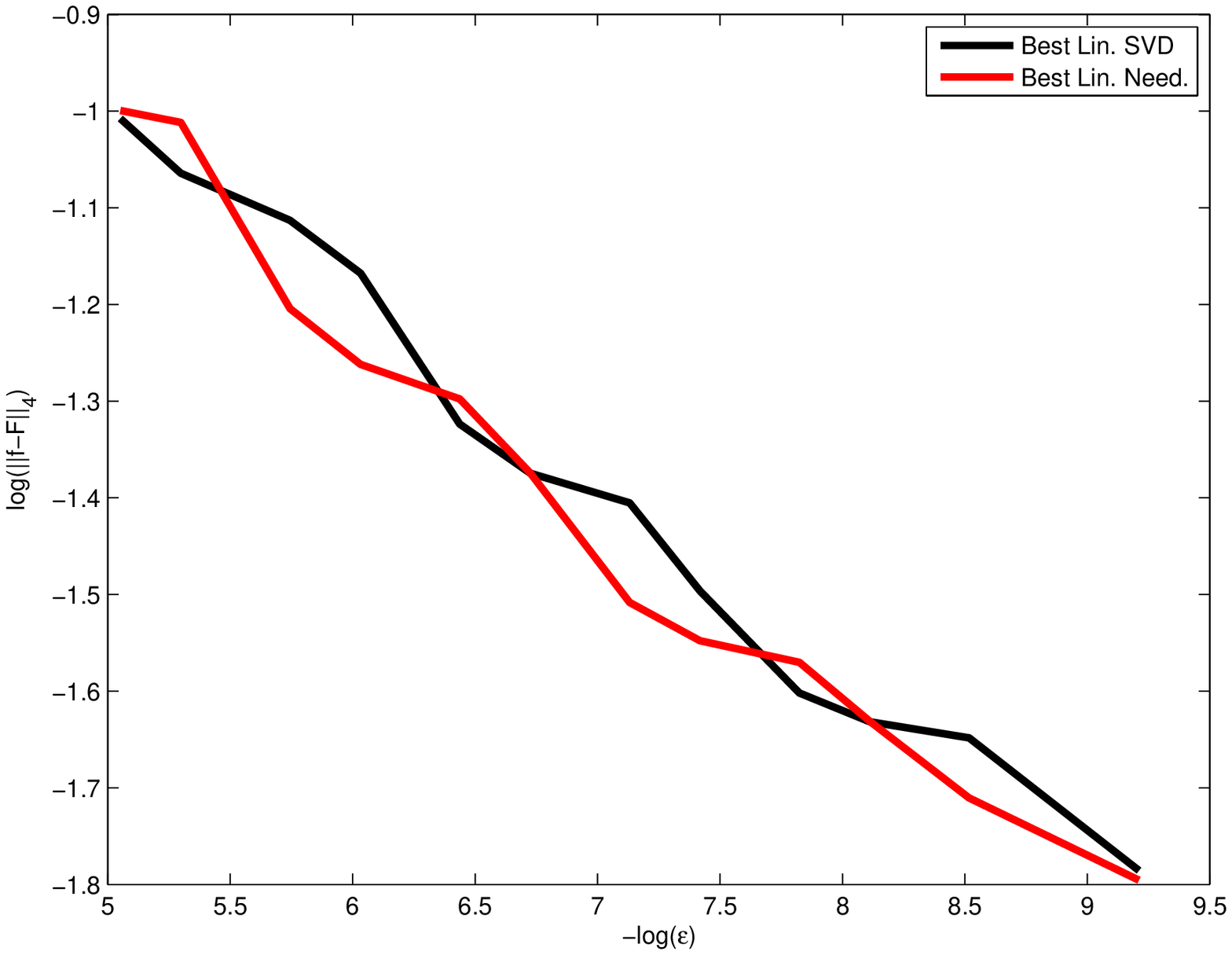}
&&
\includegraphics[width=5cm]{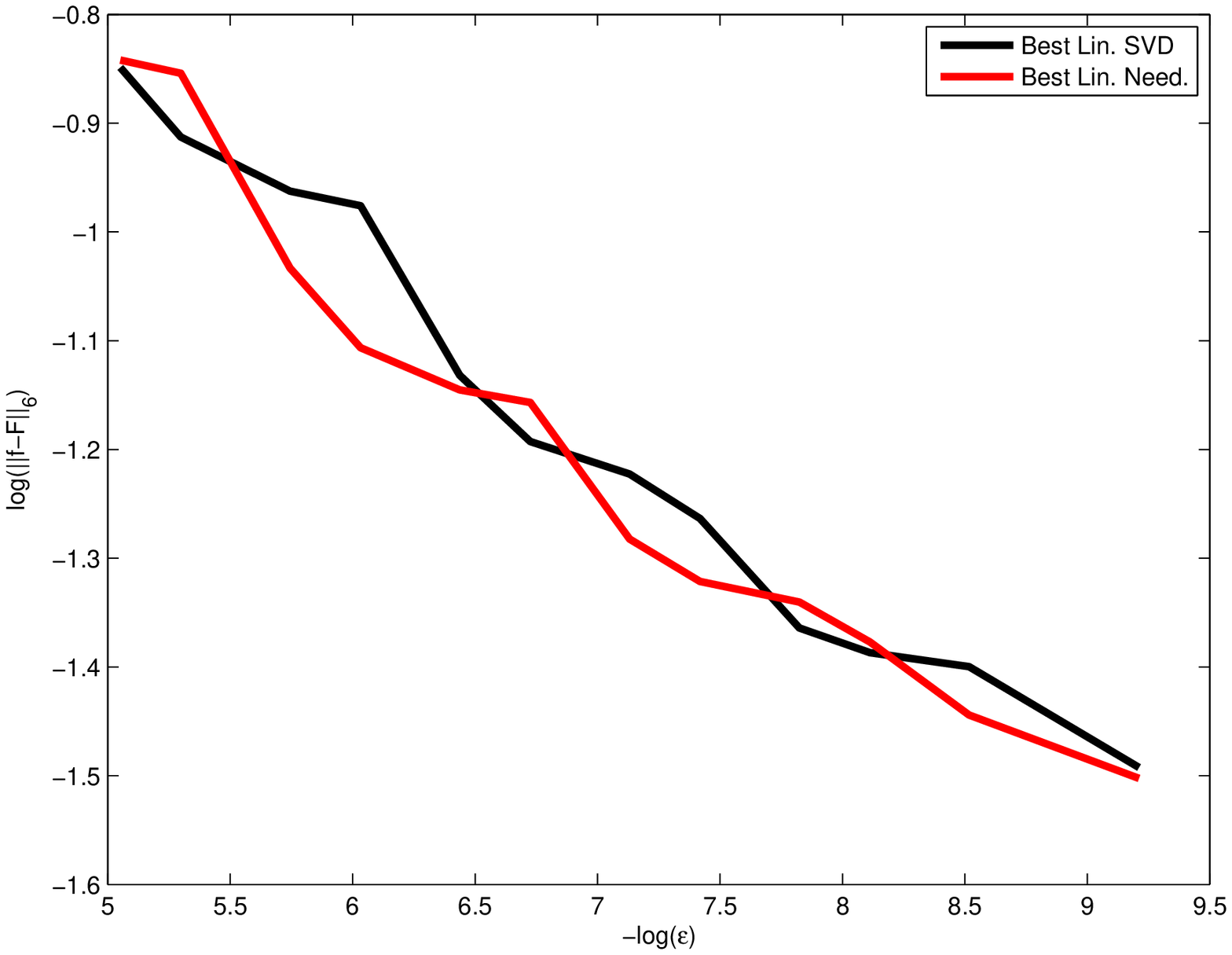}\\
 $L^4$&&$L^6$\\
\includegraphics[width=5cm]{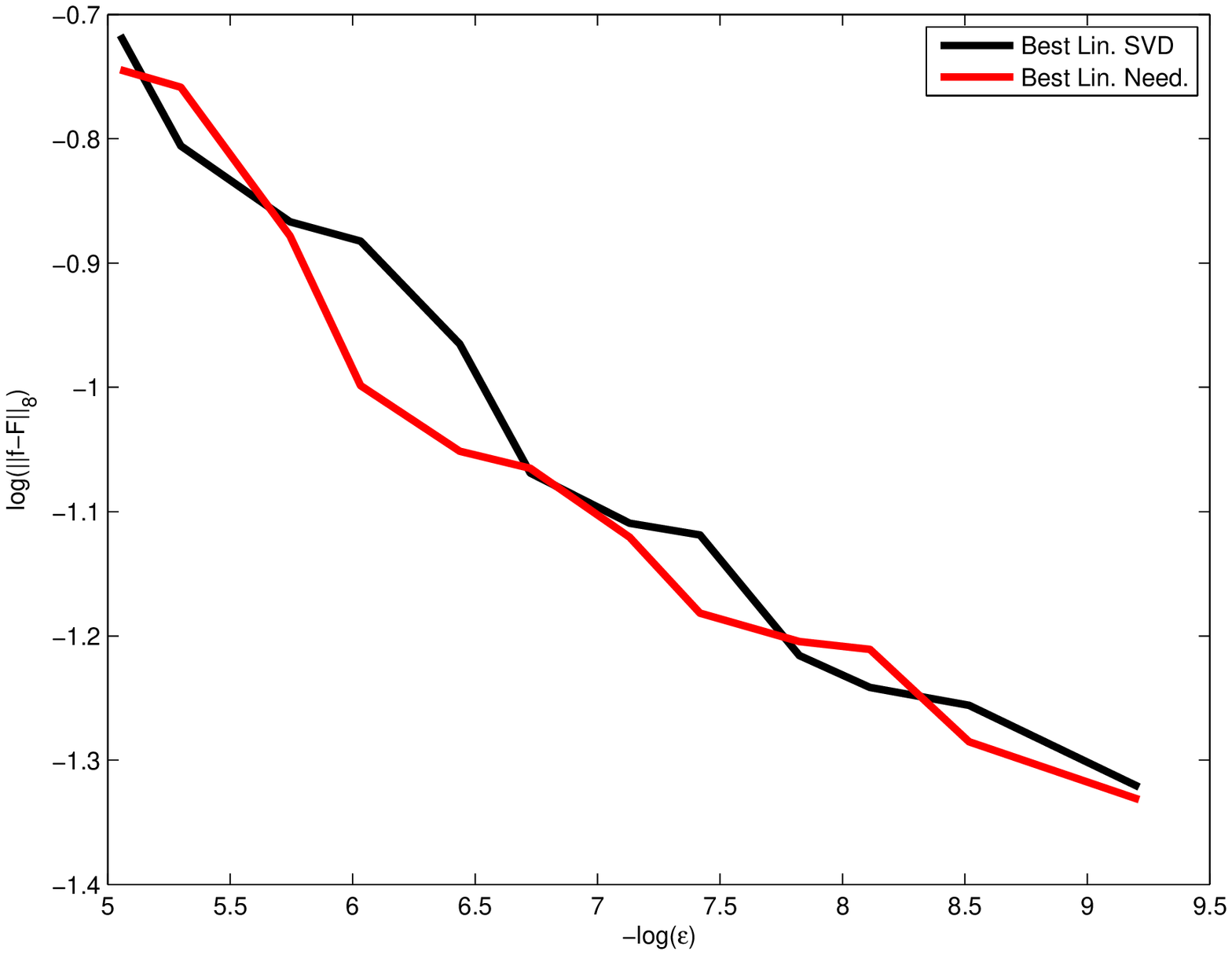}
&&\includegraphics[width=5cm]{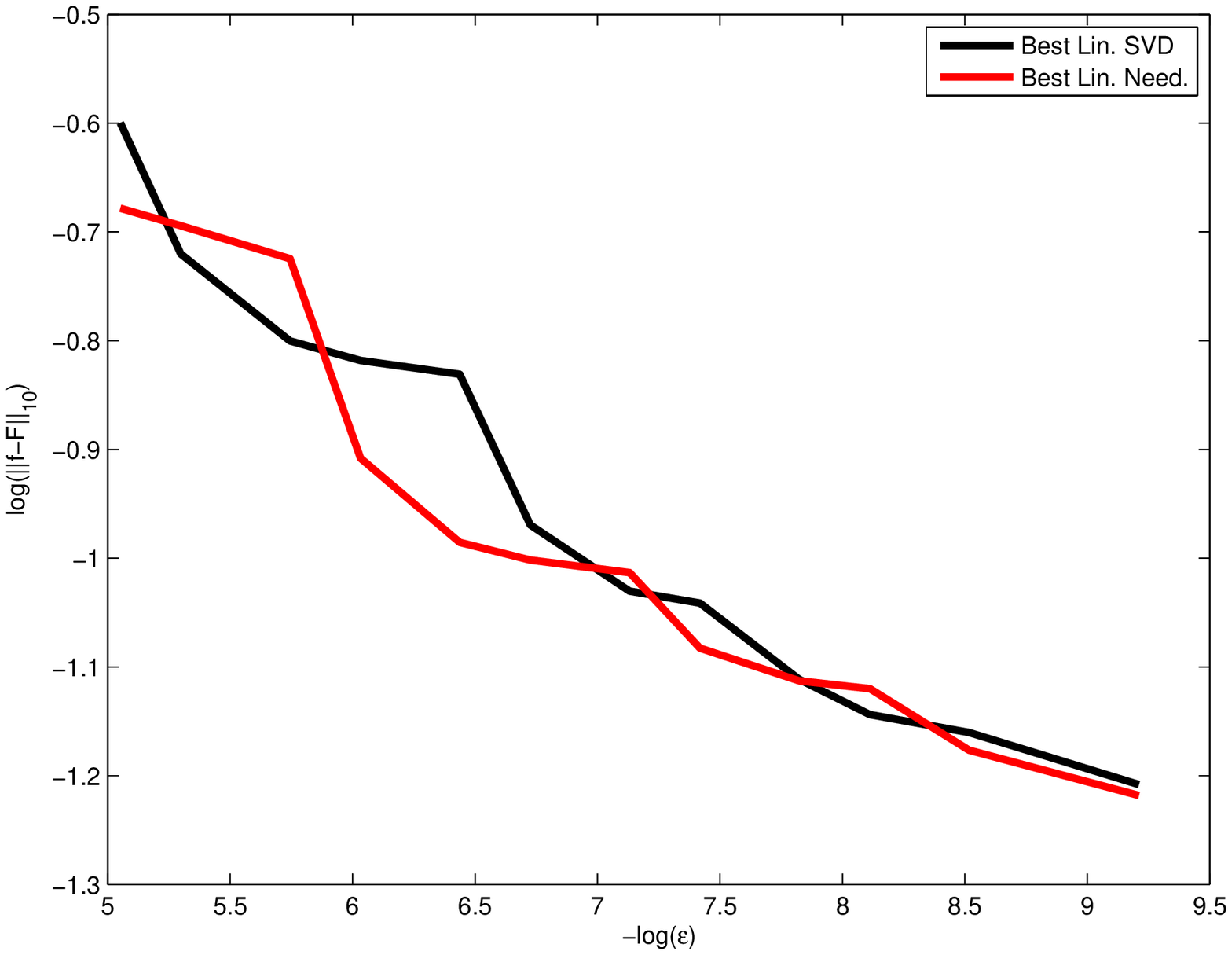}\\
$L^8$ && $L^{10}$\\
&&\includegraphics[width=5cm]{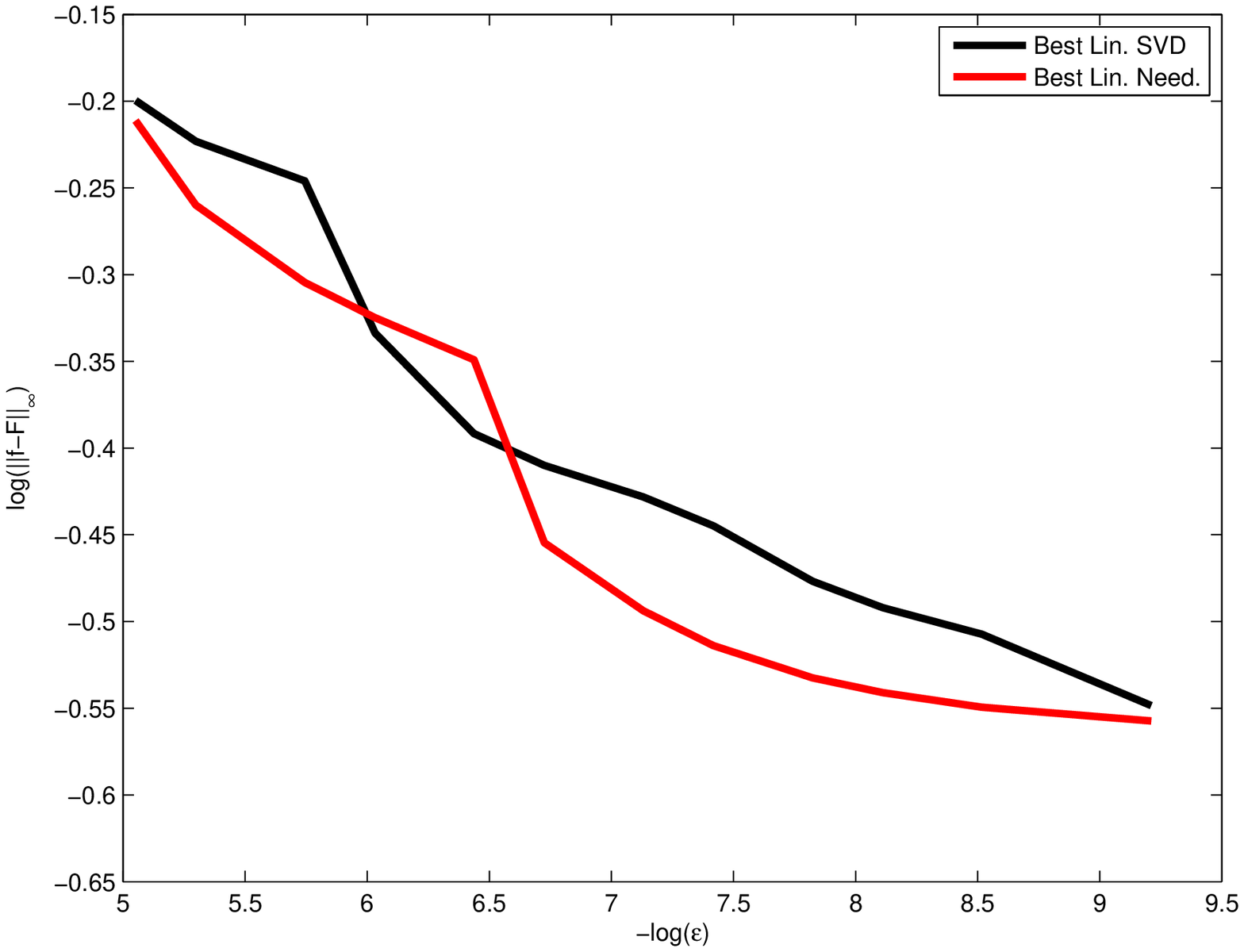}\\
&& $L^{\infty}$
  \end{tabular}
  \caption{Error decay in the white noise model: the red curve
    corresponds to the needlet estimator and the black one to the SVD estimator.}
  \label{fig:decaywhitenoise}
\end{figure}

\begin{figure}
  \centering
  
  \begin{tabular}{ccc}
\includegraphics[width=5cm]{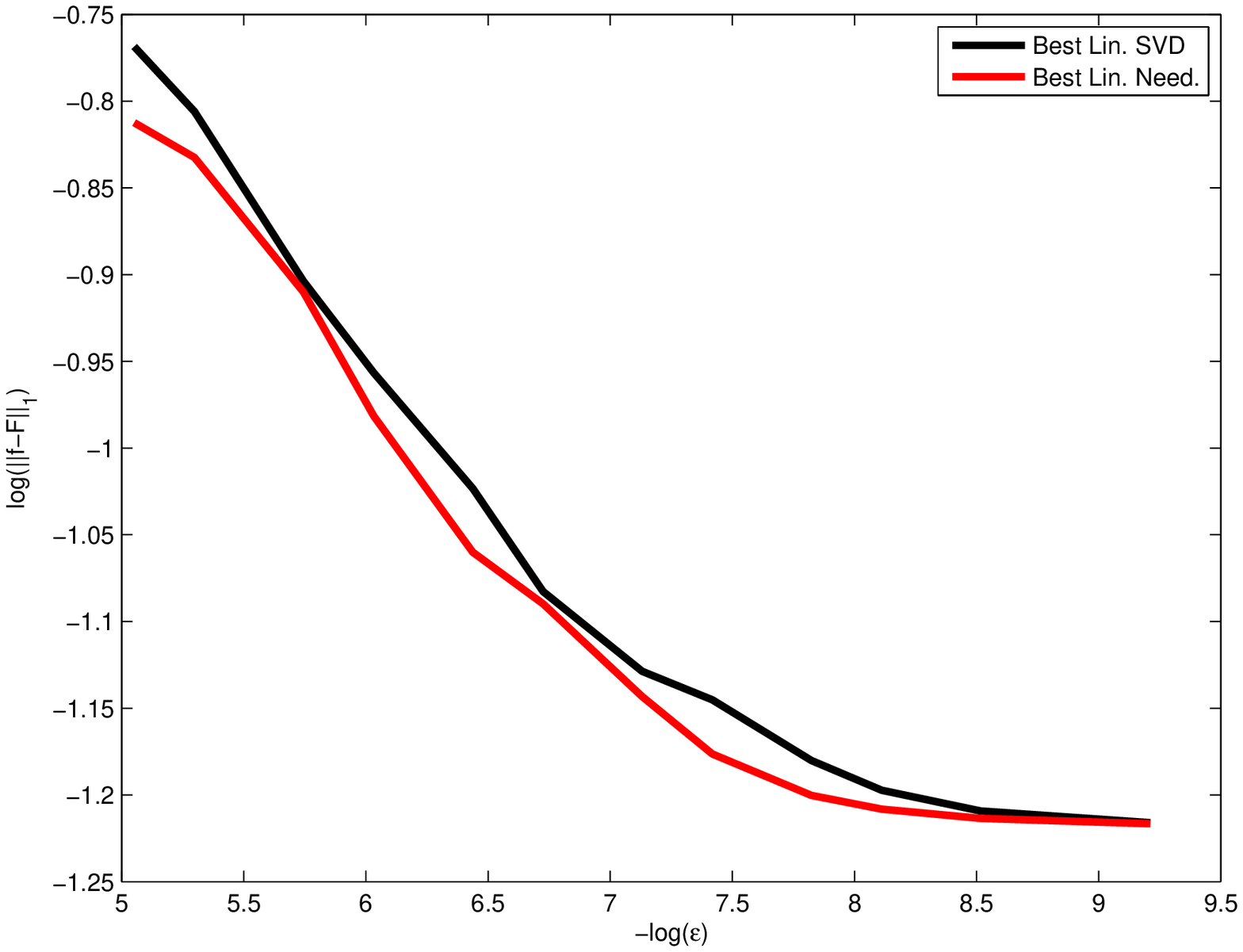}
&&\includegraphics[width=5cm]{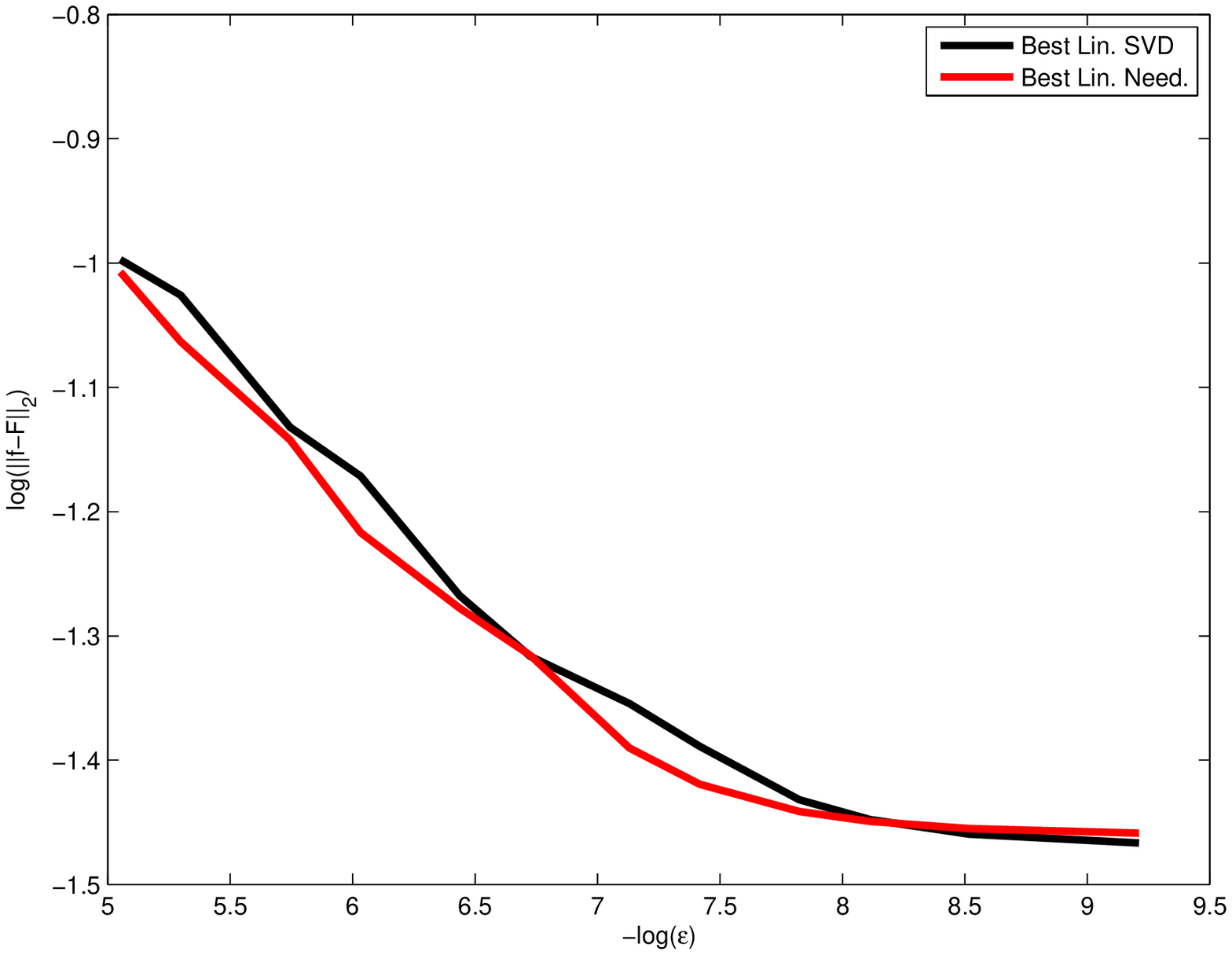}\\
$L^1$ && $L^2$\\
\includegraphics[width=5cm]{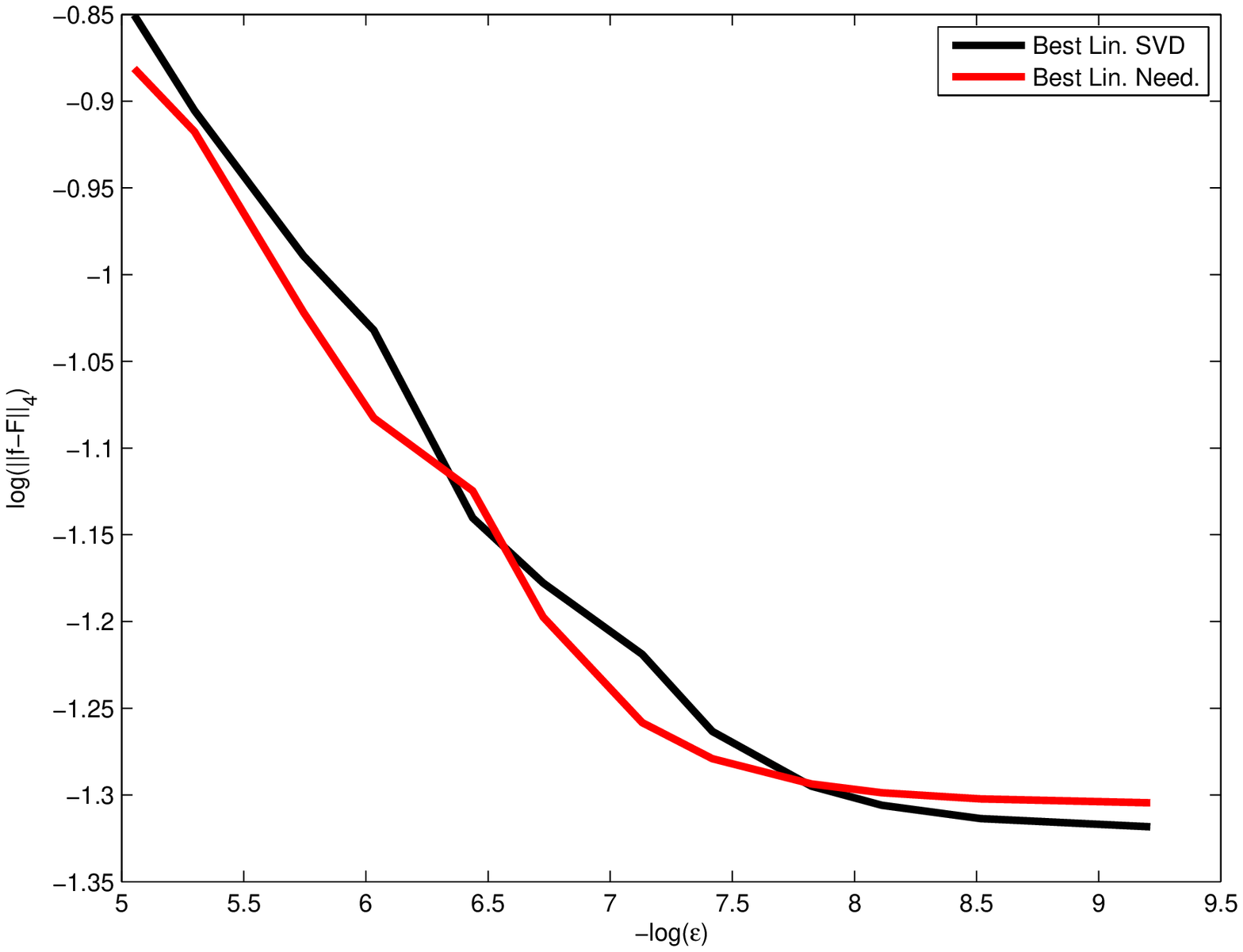}
&&
\includegraphics[width=5cm]{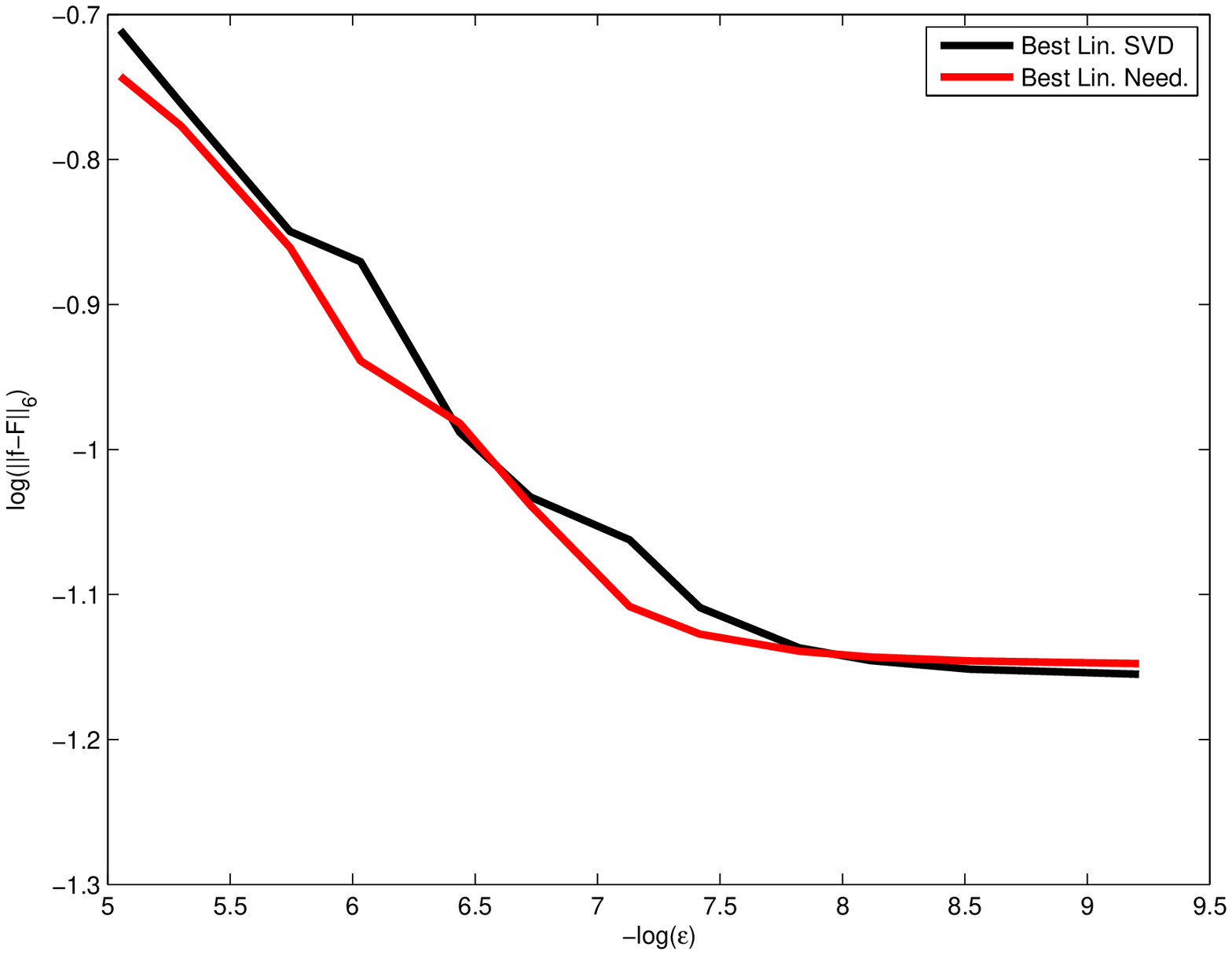}\\
 $L^4$&&$L^6$\\
\includegraphics[width=5cm]{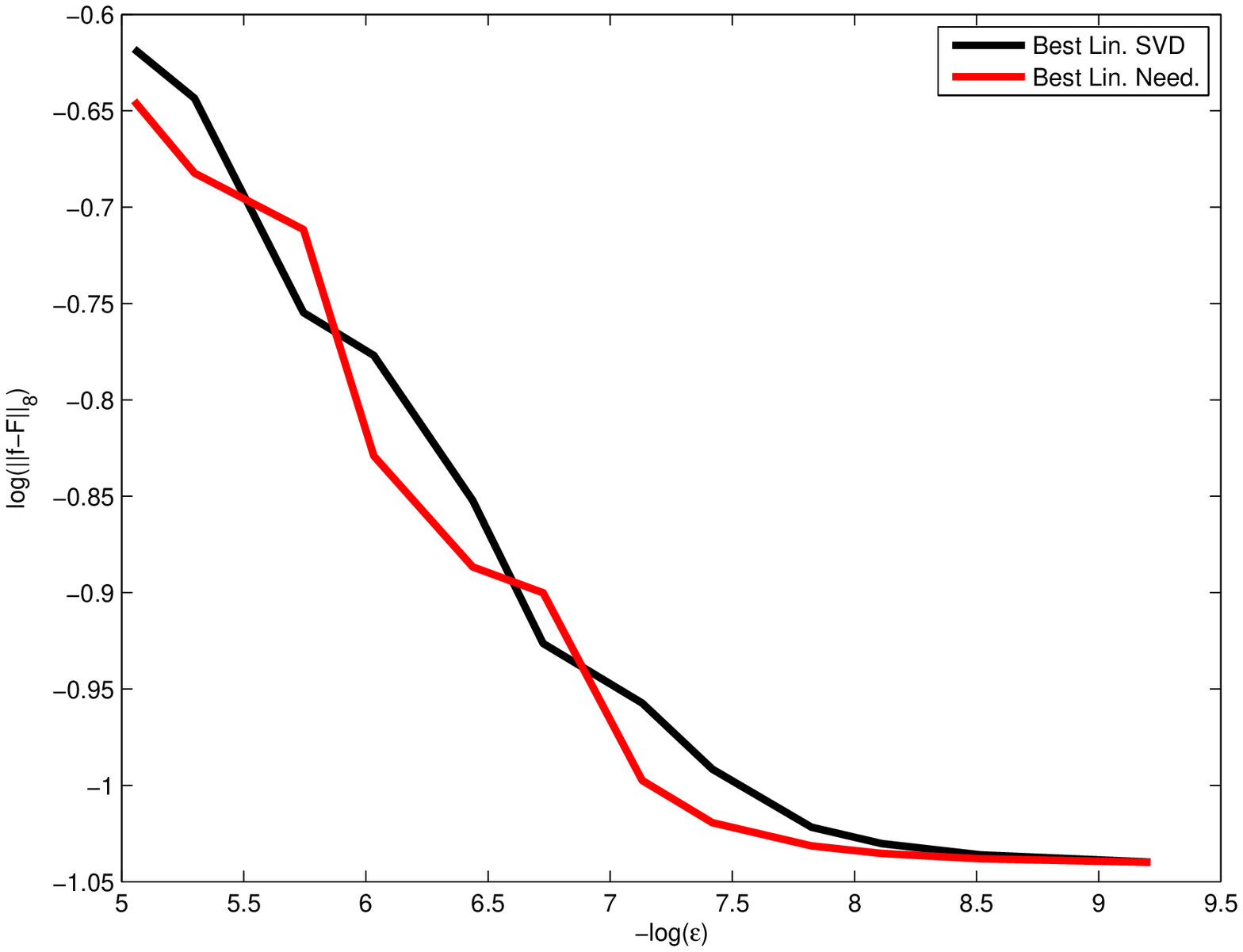}
&&\includegraphics[width=5cm]{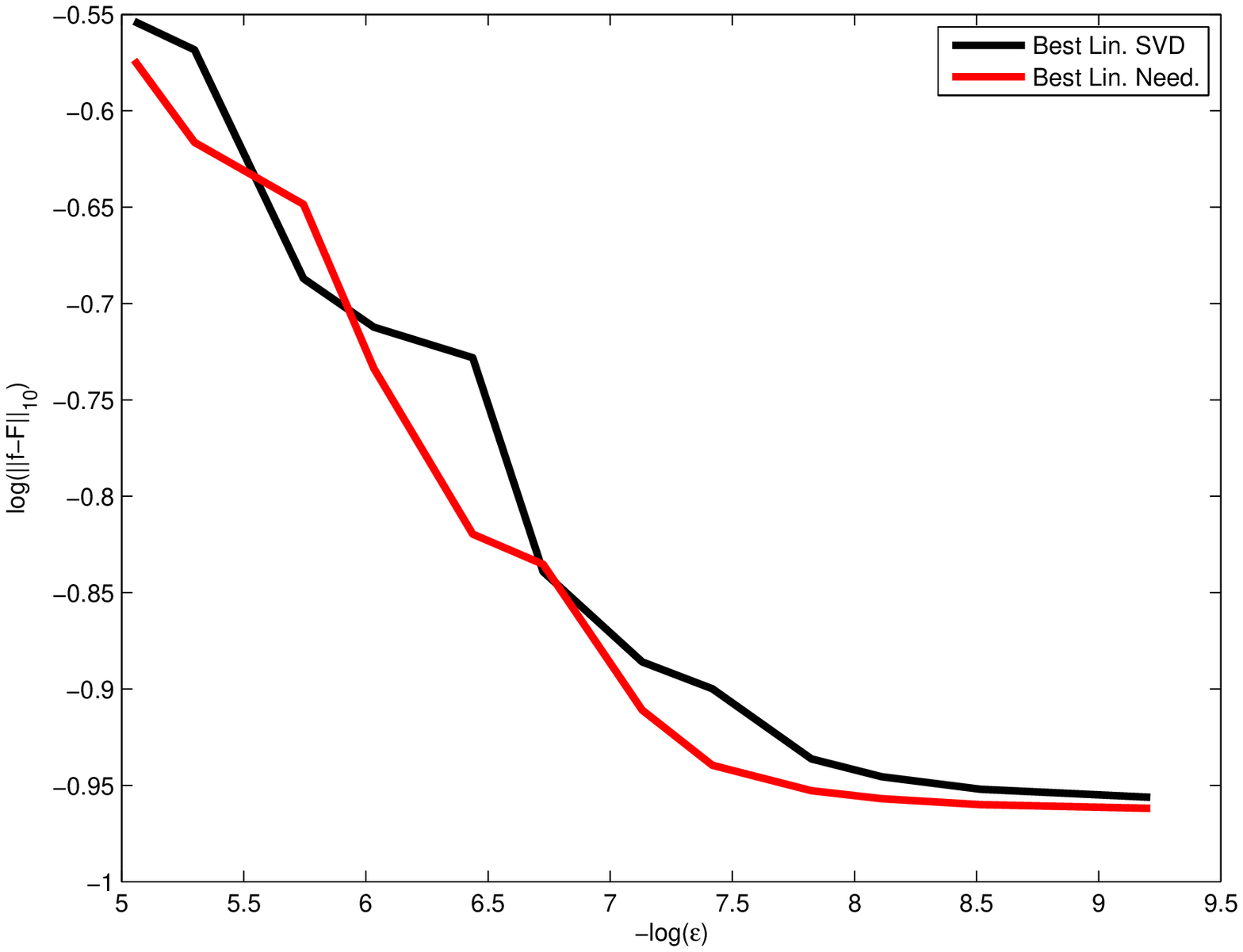}\\
$L^8$ && $L^{10}$\\
&&\includegraphics[width=5cm]{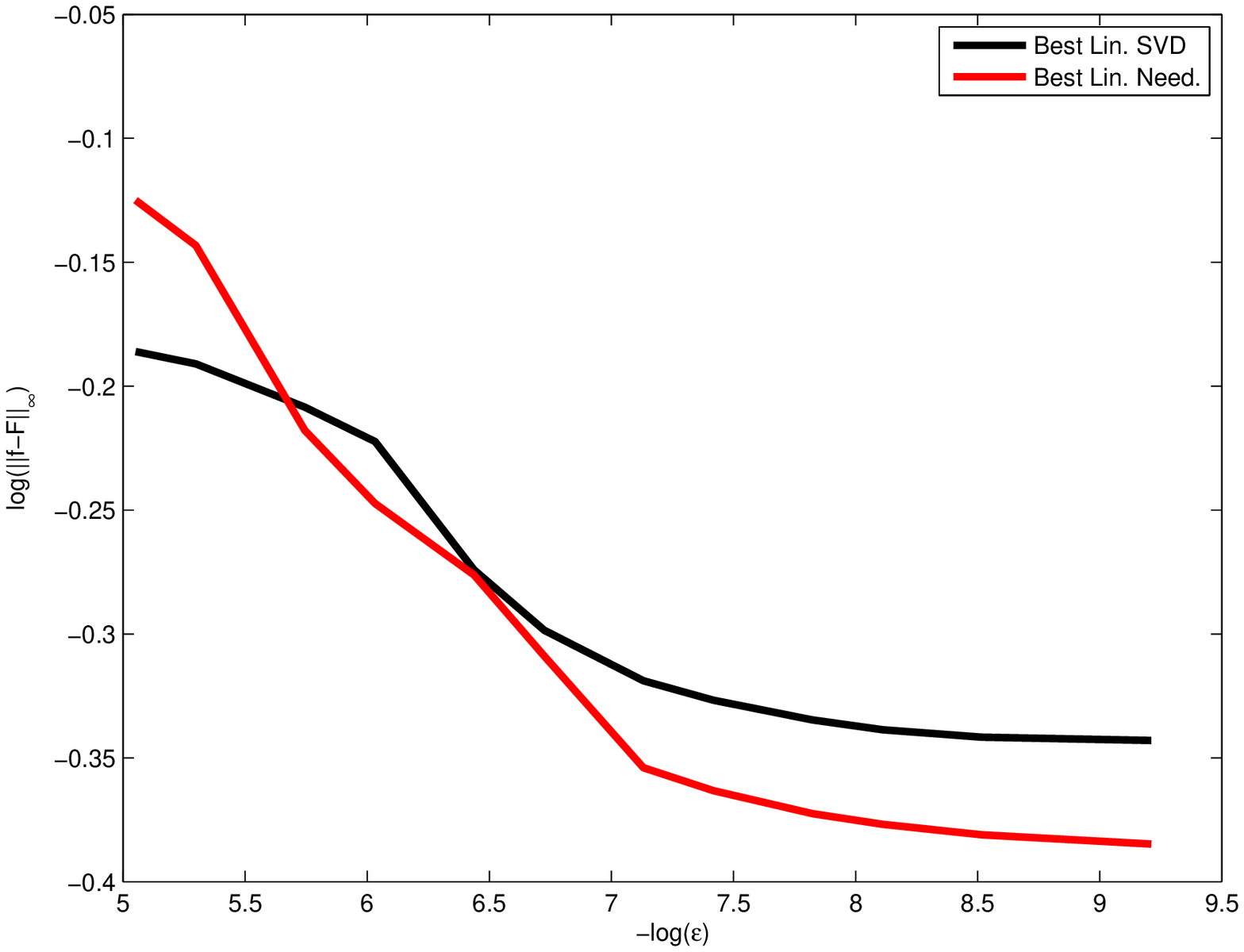}\\
&& $L^{\infty}$
  \end{tabular}
  \caption{Error decay in the regression model:  the red curve
    corresponds to the needlet estimator and the black one to the SVD estimator.}
  \label{fig:decayreg}
\end{figure}

A fine tuning for the choice  of the maximum degree is very important to obtain
a good estimator. In our proposed scheme, and in the Theorem, this
parameter is set by the user according to some expected
properties of the unknown function or using some oracle.
 Nevertheless, an adaptive estimator, which
does not require this input, can already be obtained from this family, for
example, by using some aggregation technique. A different way to
obtain an adaptive estimator based on thresholding is under investigation
by some of the authors.

\begin{figure}
  \centering

\begin{tabular}{ccc}
Original ($f$) && Inversion ($\hat f^I$)\\
\parbox[c]{6cm}{\includegraphics[width=6cm]{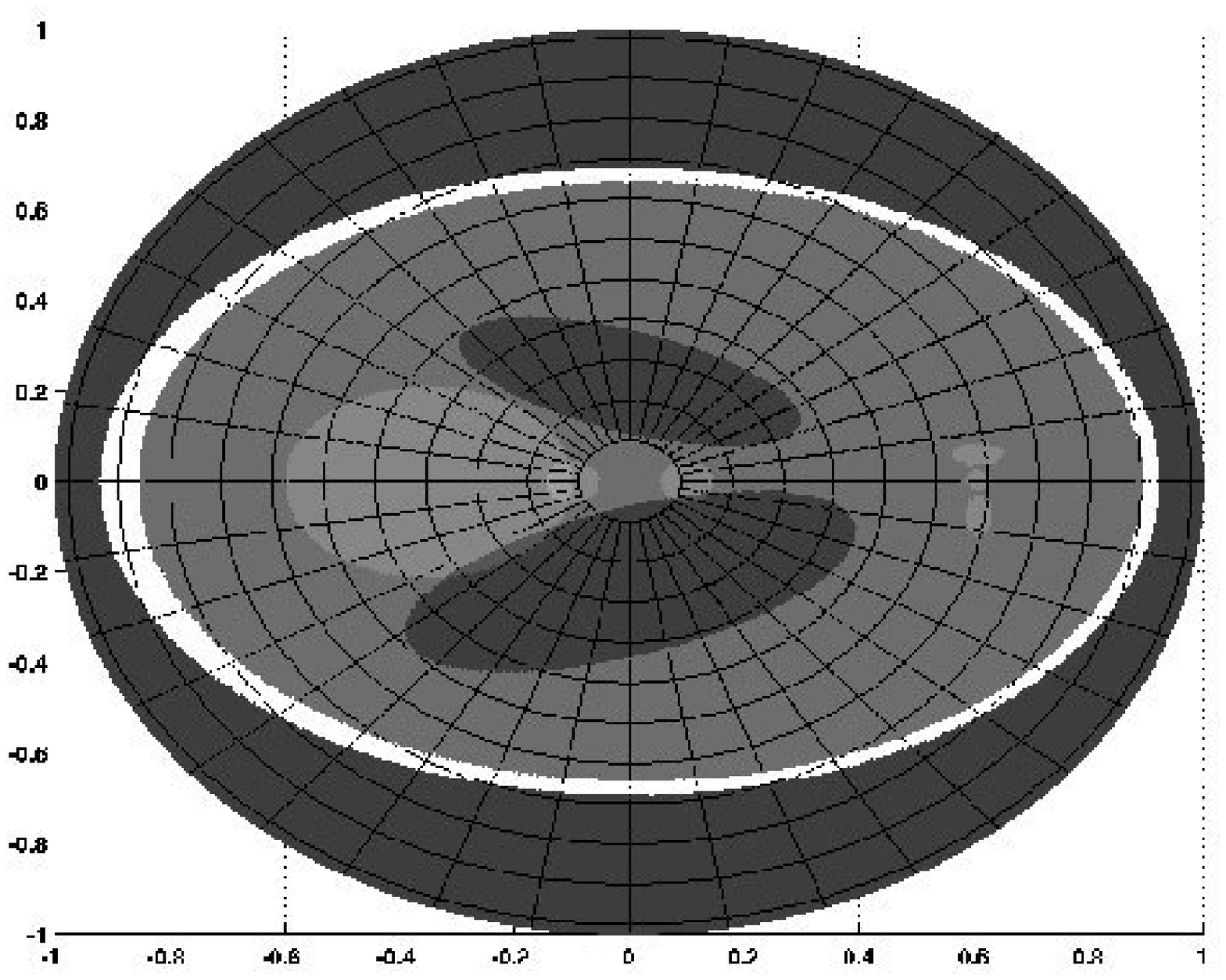}}
&
\hspace*{.5cm}
&
\parbox[c]{6cm}{\includegraphics[width=6cm]{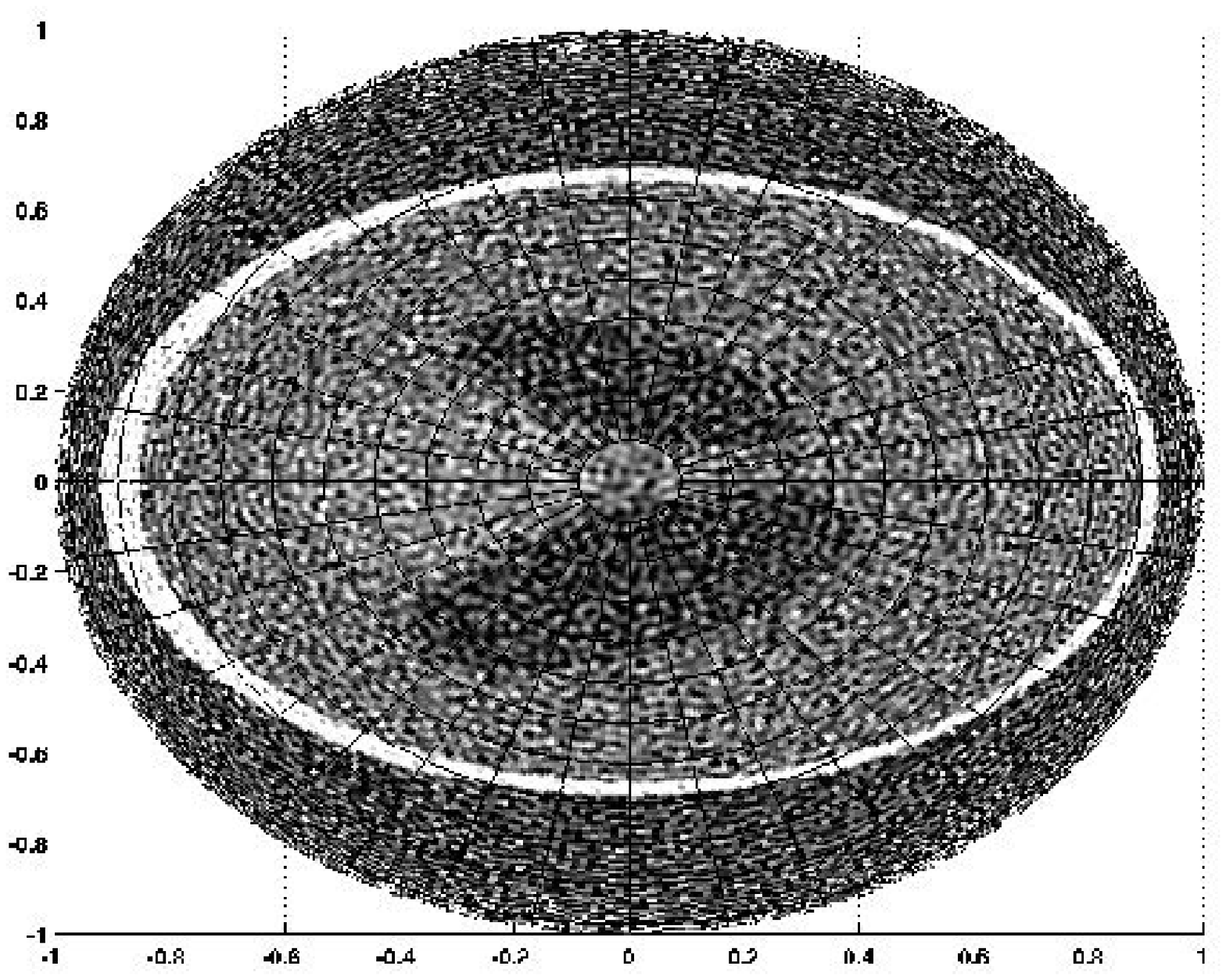}}
\\
\\
\parbox[c]{6cm}{\includegraphics[width=6cm]{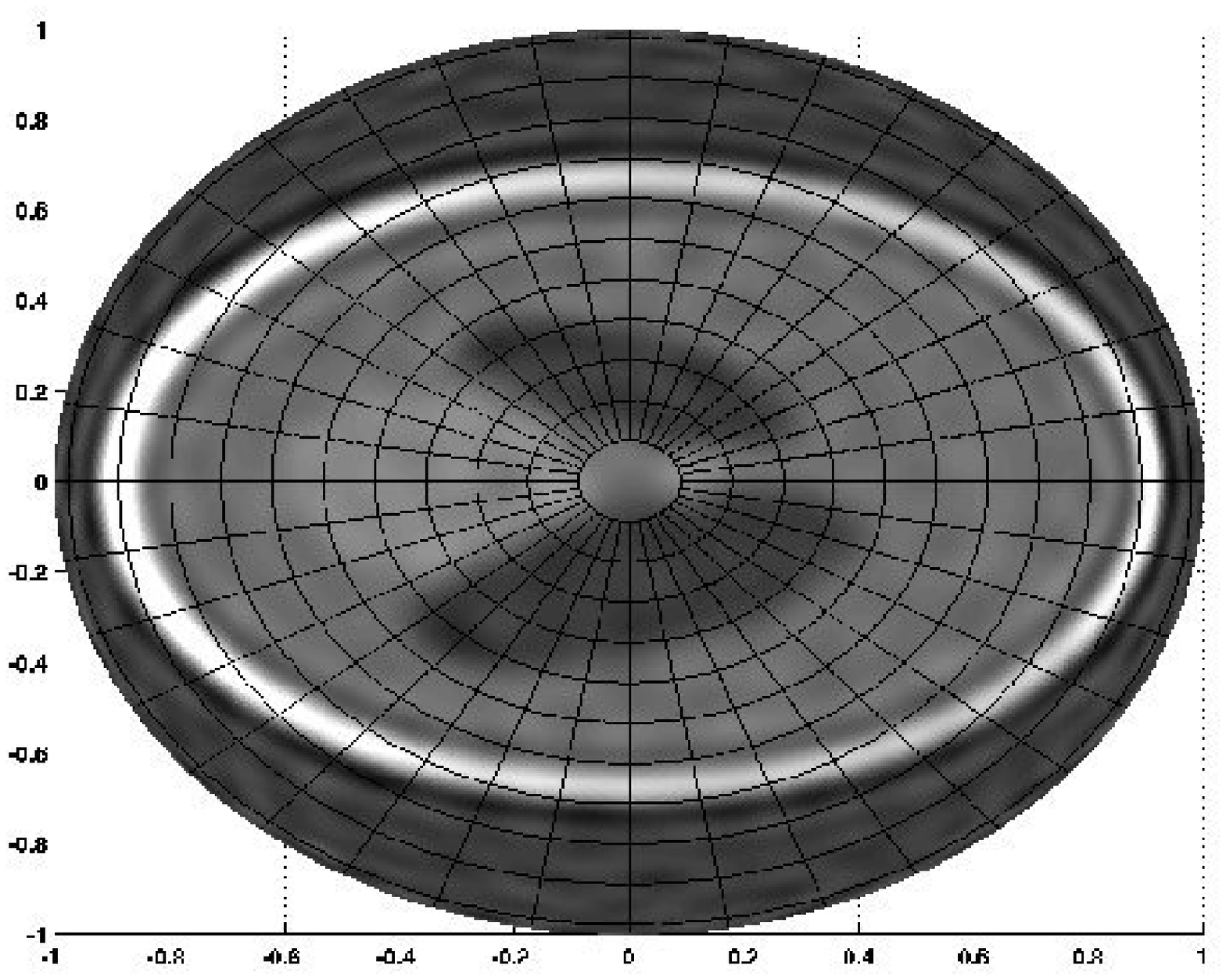}}
&
&
\parbox[c]{6cm}{\includegraphics[width=6cm]{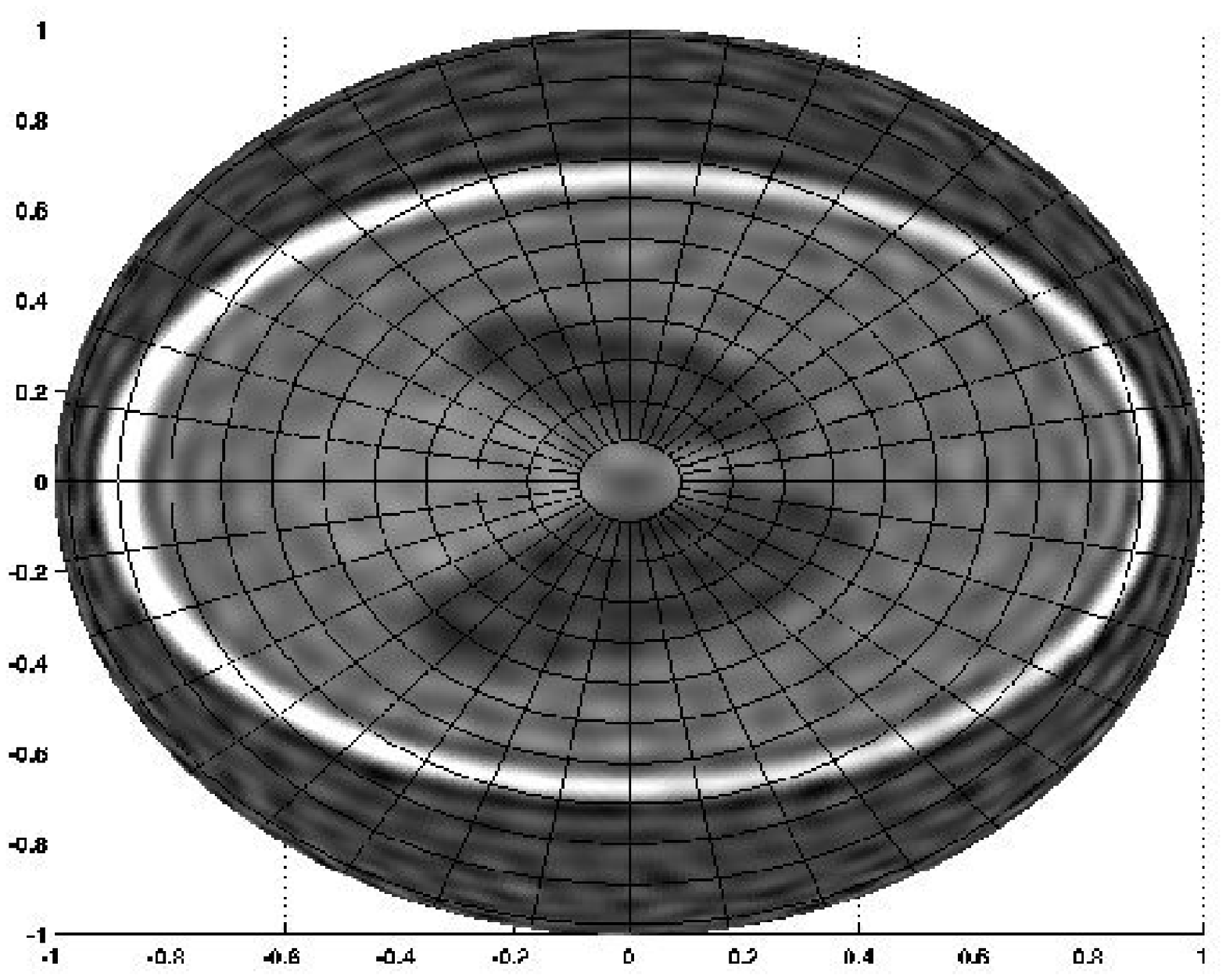}}\\
Needlet ($\hat f^N_{2^6}$)&& SVD ($\hat f^D_{2^5}$)
\end{tabular}  

  \caption{Visual comparison for the original Logan Shepp phantom with
  $\epsilon=8$ (Low quality figure due to
    arXiv constraint)}
  \label{fig:numcomp}
\end{figure}

\section{Appendix}

\subsection{Proof of identity (\ref{lambda-k})}

From \cite[p. 99]{NATT} with some adjustment of notation, we have
$$
\lambda_k^2=\frac{|\bS^{d-2}| \pi^{(d-1)/2}}{\Gamma\big(\frac{d+1}{2}\big) C_k^{d/2}(1)C_l^{(d-2)/2}(1)}
\int_{-1}^1 C_k^{d/2}(t)C_l^{(d-2)/2}(t)(1-t^2)^{(d-3)/2}dt,
$$
where $0\le l\le k$ and $l\equiv k$ $(\mod 2)$.
As will be seen shortly $\lambda_k$ is independent of $l$.

We will only consider the case $d>2$ (the case $d=2$ is simpler, see \cite[p. 99]{NATT}).
To~compute the above integral we will use the well known identity (cf. \cite[(4.7.29)]{SZG})
$$
(n+\lambda)C_n^{\lambda}(t)= \lambda(C_n^{\lambda+1}(t)-C_{n-2}^{\lambda+1}(t)).
$$
Summing up these identities (with indices $n, n-2, \dots$)
and taking into account that
$C_0^\lambda(t)=1$, $C_1^\lambda(t)=2\lambda(t)$, we get
\begin{equation}\label{sum-C}
C_n^{\lambda+1}(t)
= \sum_{j=0}^{\lfloor n/2\rfloor} \frac{n-2j+\lambda}{\lambda}C_{n-2j}^\lambda(t).
\end{equation}
This with $\lambda=(d-2)/2$ and the orthogonality of the polynomials
$C_n^{(d-2)/2}(t)$, $n\ge 0$, yield
\begin{align*}
\int_{-1}^1 C_k^{d/2}(t)C_l^{(d-2)/2}(t)(1-t^2)^{(d-3)/2}dt
&=\frac{l+\lambda}{\lambda}  
\int_{-1}^1 [C_l^{(d-2)/2}(t)]^2(1-t^2)^{(d-3)/2}dt\\
&=\frac{l+\lambda}{\lambda}h_l^{(\lambda)}
=\frac{(l+\lambda)2^{1-2\lambda}\pi}{\lambda\Gamma(\lambda)^2}
\frac{\Gamma(l+2\lambda)}{(l+\lambda)\Gamma(l+1)}.
\end{align*}
We use this and that $C_n^{\lambda}(1)=\frac{\Gamma(n+2\lambda}{n!\Gamma(2\lambda)}$
(see \S\ref{Jacobi}) and $|\bS^{d-2}|=\frac{2\pi^{(d-1)/2}}{\Gamma((d-1)/2)}$ to obtain
\begin{align}\label{express-lambda-k}
\lambda_k^2
&=\frac{2\pi^{d-1}}{\Gamma\big(\frac{d-1}{2}\big)\Gamma\big(\frac{d+1}{2}\big)}
\frac{k!\Gamma(d)l!\Gamma(d-2)}{\Gamma(k+d)\Gamma(l+d-2)}
\frac{2^{4-d}\pi}{(d-2)\Gamma\big(\frac{d-2}{2}\big)^2}
\frac{\Gamma(l+d-2)}{\Gamma(l+1)} \notag\\
&=\frac{2^{5-d}\pi^{d}}{\Gamma\big(\frac{d-1}{2}\big)\Gamma\big(\frac{d+1}{2}\big)}
\frac{\Gamma(d)\Gamma(d-2)}{(d-2)\Gamma\big(\frac{d-2}{2}\big)^2}
\frac{1}{(k+1)_{d-1}}.
\end{align}
The doubling formula for Gamma-function says:
$\Gamma(2z)=\frac{2^{2z-1}}{\sqrt{\pi}}\Gamma(z)\Gamma(z+1)$ (see e.g. \cite{SZG})
and hence
$$
\Gamma(d)\Gamma(d-2)=(d-1)(d-2)\Gamma(d-2)^2
=\frac{2^{2(d-3)}}{\pi}(d-1)(d-2)\Gamma\Big(\frac{d-2}{2}\Big)^2\Gamma\Big(\frac{d-1}{2}\Big)^2.
$$
We insert this in (\ref{express-lambda-k}) and then a little algebra shows that
$\lambda_k^2=\frac{2^d\pi^{d-1}}{(k+1)_{d-1}}$.
\qed

\subsection{Proof of Theorem~\ref{thm:stability}}
For the proof of estimates (\ref{B})-(\ref{BI}) we first note that by (\ref{needlet-estim})
\[
\sum_{\xi \in \chi_j}\|h_{j,\xi} \|^p_p
\leq c 2^{jd p/2}2^{- jd }
\sum_{\xi \in \chi_j} \frac 1{\big(2^{-j} + \sqrt{1-|\xi|^2}\big)^{p/2-1}}
\]
and we need an upper bound for
$\Omega_r := 2^{-jd}\sum_{\xi \in \chi_j}  \frac 1{\big( 2^{-j} + \sqrt{1-|\xi|^2}\big)^r}$.
To this end, we will use the natural bijection between $B^d$ and $\bS^d_+$ considered in
the remark in \S\ref{defAB}.
Thus for $ x \in B^d$ we write
$\tilde{x} =(x, \sqrt{1-|x|^2} )\in \bS^{d}_+$.
Let $\tilde{p}=(0,1)$ be the "north pole" of $\bS^{d}$.
For $\tilde{\xi}\in \chi_j$ we denote by
$B_{\bS^d}(\tilde{\xi}, \rho)$ is the geodesic ball on $\bS^{d}$ of radius $\rho$ centered at $\tilde{\xi}$,
i.e.
$B_{\bS^d}(\tilde{\xi}, \rho):=\{\tilde{x}\in \bS^d: d_{\bS^{d}}(\tilde{x},\tilde{p})< \rho\}$,
where
$d_{\bS^{d}}(\tilde{x},\tilde{p})
=\Arccos \, (\sqrt{1-|x|^2})
=\Arccos \, \langle \tilde{x}, \tilde{p}\rangle$
is the geodesic distance between $\tilde{x}$, $\tilde{p}$.
Using that
$||u|-|\xi||\le |\langle \tilde{u}, \tilde{p}\rangle  - \langle \tilde{\xi}, \tilde{p}\rangle|
\leq  d_{\bS^{d}}(\tilde{\xi},\tilde{u}) \le \rho$ for $\tilde{u}\in B_{\bS^d}(\tilde{\xi}, \rho)$,
and $\rho = \tau 2^{-j} \le 2^{-j}$ (see Proposition~\ref{prop:CUB}), it follows that
\[
\frac 1{  2^{-j} + \sqrt{1-|\xi|^2}}
\leq  \frac 2{2^{-j} + \sqrt{1-| u|^2}}
=\frac 2{2^{-j} +\langle \tilde{u}, \tilde{p}\rangle}
\quad \forall \tilde{u}\in B_{\bS^d}(\tilde{\xi}, \rho).
\]
On the other hand, we have
$| B_{\bS^d}(\tilde{\xi}, \rho) |
= |\bS^{d-1}|\int_0^\rho (\sin \theta)^{d-1} d\theta
\geq \rho^d |\bS^{d-1}|\frac{2^{d-1}}{d\pi^{d-1}}$
with $|\bS^{d-1}| =\frac{2\pi^{d/2}}{\Gamma(d/2)}$.
We use the above and the fact that the balls $\{B_{\bS^d}(\tilde{\xi}, \rho)\}_{\xi\in\chi_j}$
are disjoint to obtain
\begin{align*}
\Omega_r
&\le 2^{-jd}\sum_{\xi \in \chi_j}
\frac 1{|B_{\bS^d}(\tilde{\xi}, \rho)|}\int_{B_{\bS^d}(\tilde{\xi},\rho)}
\frac 1{(2^{-j} +\langle \tilde{u}, \tilde{p}\rangle)^r}d\sigma(\tilde{u})\\
& \le c \int_{\bS^d_+}
\frac 1{ (2^{-j} +\langle \tilde{u}, \tilde{p}\rangle)^r}d\sigma(\tilde{u})
\le c|\bS^{d-1}|\int_0^{\pi/2} \frac{(\sin \theta)^{d-1}}{(2^{-j}+\cos\theta)^r} d\theta\\
& \le c\int_0^{\pi/2} \frac{\sin \theta}{(2^{-j}+\cos\theta)^r} d\theta
= c\int_0^1 \frac{1}{(2^{-j}+t)^r} dt
\le c(d, \tau, r)\int_{2^{-j}}^2 t^{-r}dt.
\end{align*}
This yields estimates (\ref{B})-(\ref{BI}).


We now turn to the proof of estimate~(\ref{B2prime}).
We will employ the maximal operator $\cM_t$ ($t>0$), defined by
\begin{equation}\label{def: max-op}
\cM_tf(x):=\sup_{B\ni x}\left(\frac1{|B|}\int_B|f(y)|^t dy\right)^{1/t},
\quad x\in B^d,
\end{equation}
where the sup is over all balls  $B\subset B^d$ with respect to the distance $d(\cdot, \cdot)$
from (\ref{def-distance}) containing~$x$.
It is easy to show that (see \S2.3 in \cite{pxukball})
the Lebesgue measure on $B^d$ is a doubling measure with respect to
the distance $d(\cdot, \cdot)$. Hence the general theory of maximal
operators applies. In particular, the Fefferman-Stein vector-valued
maximal inequality is valid:
If $0<p<\infty, 0<q\le\infty$, and
$0<t<\min\{p,q\}$ then for any sequence of functions
$\{f_\nu\}_\nu$ on $B^d$
\begin{equation}\label{max-ineq}
\|(\sum_{\nu=1}^\infty|\cM_tf_\nu(\cdot)|^q)^{1/q}\|_p
\le c\|(\sum_{\nu=1}^\infty|
f_\nu(\cdot)|^q)^{1/q}\|_p.
\end{equation}
Denote by $B(\xi, r)$ the projection of $B_{\bS^d}(\tilde{\xi},r)$ onto $B^d$,
i.e. $B(\xi, r):=\{x\in B^d: d(x, \xi)<r\}$.
By \cite[Lemma 2.5]{pxukball}, we have
\begin{equation}\label{est-Max-B}
(\cM_t \ONE_{B(\xi, r)})(x) \ge c\Big(1+\frac{d(\xi , x)}{r}\Big)^{-(d+1)/t},
\quad \xi\in B^d, \; 0<r\le \pi.
\end{equation}
It is easy to see (cf. \cite{pxukball}) that
\begin{equation}\label{maesure-ball}
|B(\xi, \rho)| \sim 2^{-jd}(2^{-j}+\sqrt{1-|\xi|^2})\sim 2^{-jd}W_j(\xi),
\quad \xi\in \chi_j.
\end{equation}
Also, we let $\tONE_{E}:= \frac{1}{|E|}\ONE_{E} $ denote the $L^2$-normalized characteristic
function of $E\subset B^d$.
Then (\ref{needlet-local}) and (\ref{maesure-ball}) imply
\begin{equation}\label{norm-hj}
\|h_{j, \xi}\|_p \sim \|\tONE_{B(\xi, \rho)}\|_p, \quad \xi\in \chi_j.
\end{equation}
Now, pick $0<t<1$ and $M > (d+1)/t$. From (\ref{needlet-local}) and (\ref{est-Max-B})
it follows that
\begin{equation}\label{max1}
|h_{j, \xi}(x)| \le c(\cM_t \tONE_{B(\xi, \rho)})(x), \quad x\in B^d.
\end{equation}
Using this, the maximal inequality (\ref{max-ineq}), and (\ref{norm-hj}) we obtain
\begin{eqnarray*}
\|\sum_{\xi\in \chi_j} d_\xi h_{j, \xi}\|_p
&\le& c\|\sum_{\xi\in \chi_j} \cM_t(d_\xi \tONE_{B(\xi, \rho)})\|_p
\le c\|\sum_{\xi\in \chi_j} d_\xi\tONE_{B(\xi, \rho)}\|_p\\
&\le& c\Big(\sum_{\xi\in \chi_j} \|d_\xi\tONE_{B(\xi, \rho)}\|_p^p\Big)^{1/p}
\le c \Big(\sum_{\xi\in \chi_j}\| d_\xi h_{j, \xi}\|_p^p\Big)^{1/p}.
\end{eqnarray*}
This completes the proof of (\ref{B2prime}). \qed

\bibliographystyle{plain}
\bibliography{pencholin}

\end{document}